\numberwithin{equation}{section}
\newtheorem{thm}{Theorem}[section]
\newtheorem{lem}[thm]{Lemma}
\newtheorem{defn}[thm]{Definition}
\newtheorem{rem}[thm]{Remark}
\newcommand{\sgn}{{\rm sgn} \mspace{1mu}}
\renewcommand{\labelenumi}{\alph{enumi})}
\def\Indicator{\mathop{\hskip0pt{1}}\nolimits}
\def\eqdef{\triangleq}
\begin{document}

\begin{frontmatter}

\title{Implicit Renewal Theorem for Trees \\ with General Weights}
\runtitle{Implicit Renewal Theory on Trees}


\author{\fnms{Predrag R.} \snm{Jelenkovi\'c}\ead[label=e1]{predrag@ee.columbia.edu}}
\address{Department of Electrical Engineering  \\ Columbia University \\ New York, NY 10027 \\ \printead{e1}}
\affiliation{Columbia University}
\and \hspace{4pt}
\author{\fnms{Mariana} \snm{Olvera-Cravioto}\ead[label=e2]{molvera@ieor.columbia.edu}}
\address{Department of Industrial Engineering \\ and Operations Research \\ Columbia University \\ New York, NY 10027 \\ \printead{e2}}
\affiliation{Columbia University}

\runauthor{P.R. Jelenkovi\'c  and M. Olvera-Cravioto}

\begin{abstract}
Consider distributional fixed point equations of the form 
$$R \stackrel{\mathcal{D}}{=} f( C_i, R_i, 1\le i\le N),$$
where $f(\cdot)$ is a possibly random real valued function, $N  \in \{0, 1, 2, 3, \dots\} \cup \{\infty\}$,
$\{C_i\}_{i=1}^N$ are real valued random weights 
and $\{R_i\}_{i\geq 1}$ are iid copies of $R$, independent of $(N, C_1,\dots, C_N)$;
$\stackrel{\mathcal{D}}{=}$ represents equality in distribution.
Fixed point equations of this type are of utmost importance for solving many applied probability problems, ranging from the average case analysis of algorithms to statistical physics. 
We develop an Implicit Renewal Theorem that enables the characterization of the power tail behavior of the solutions $R$ to many equations of multiplicative nature that fall into this category. This result extends the prior work in \cite{Jel_Olv_11}, 
which assumed nonnegative weights $\{C_i\}$, to general real valued weights. We illustrate the developed theorem by deriving the power tail asymptotics of the solution $R$ to the linear equation $R \stackrel{\mathcal{D}}{=} \sum_{i=1}^N C_i R_i + Q$.
\end{abstract}

\begin{keyword}[class=AMS]
\kwd[Primary ]{60H25}
\kwd[; secondary ]{60F10, 60K05, 60J80}
\end{keyword}

\begin{keyword}
\kwd{Implicit renewal theory; weighted branching processes;  multiplicative cascades; smoothing transforms; stochastic recursions; power laws; large deviations; stochastic fixed point equations}
\end{keyword}


\end{frontmatter}

\section{Introduction}

Many applied probability problems, ranging from the average case analysis of algorithms to statistical physics,
reduce to distributional fixed point equations of the form 
\begin{equation} \label{eq:general} 
R \stackrel{\mathcal{D}}{=} f( C_i, R_i, 1\le i\le N),
\end{equation}
where $f(\cdot)$ is a possibly random real valued function, $N  \in \mathbb{N}$, $\mathbb{N} = \{0, 1, 2, 3, \dots\} \cup \{\infty\}$,
$\{C_i\}_{i=1}^N$ are real valued random weights 
and $\{R_i\}_{i\geq 1}$ are iid copies of $R$, independent of $(N, C_1,\dots, C_N)$.
For a recent survey of a variety of problems where these equations appear see \cite{Aldo_Band_05}.
The solutions to these types of equations can be recursively constructed on a weighted branching tree,
where $N$ represents the generic branching variable and the $\{C_i\}_{i=1}^N$ are the branching weights. 
For this reason, we also refer to \eqref{eq:general} as recursions on weighted branching trees. 

In this paper, we develop an Implicit Renewal Theorem, stated in Theorem~\ref{T.NewGoldie}, 
that enables the characterization of the power tail behavior of the solutions $R$ to many equations of 
multiplicative nature of the form in \eqref{eq:general}.  
This result extends the prior work in \cite{Jel_Olv_11}, which assumed nonnegative weights $\{C_i\}$, 
to general real valued weights. This work also fully generalizes the Implicit Renewal Theorem of 
Goldie (1991) \cite{Goldie_91}, which was derived for equations of the form
$R\stackrel{\mathcal{D}}{=}f(C,R)$ (equivalently $N\equiv 1$ in our case),
to recursions (fixed point equations) on trees. 
Note that even in the classical non-branching problem the proof of the mixed sign case is quite involved, see the proof of Case~2 on pp.~145-149 in \cite{Goldie_91}.
We provide here a streamlined matrix form derivation of Theorem 2.3 in \cite{Goldie_91} that seamlessly extends to trees. 
For completeness, we also derive the lattice version of our implicit renewal theorem in Theorem~\ref{T.NewGoldie_Lattice}. 
One of the key observations leading to Theorems~\ref{T.NewGoldie} and~\ref{T.NewGoldie_Lattice} is that an appropriately 
constructed measure on a weighted branching tree is a matrix renewal measure, see Lemma~\ref{L.RenewalMeasure} and equation \eqref{eq:RenewalEquation}. 

We illustrate the developed theorem by deriving the power tail 
asymptotics of the nonhomogeneous linear recursion 
\begin{equation} \label{eq:IntroLinear} 
R \stackrel{\mathcal{D}}{=} \sum_{i=1}^N C_i R_i + Q,
\end{equation}
where $N  \in \mathbb{N} \cup \{\infty\}$,
$\{C_i\}_{i=1}^N$ are real valued random weights, $Q$ is a real valued random variable with  $P(Q \not = 0) > 0$
and $\{R_i\}_{i\geq 1}$ are iid copies of $R$, independent of $(N, C_1,\dots, C_N)$.
This recursion appeared recently in the stochastic analysis of Google's PageRank algorithm, 
see \cite{Volk_Litv_08, Jel_Olv_10, Jel_Olv_11} and the references therein for the latest work in the area. 
These types of weighted recursions, also known as weighted branching processes \cite{Rosler_93}, 
are found in the probabilistic analysis of other algorithms as well \cite{Ros_Rus_01, Nei_Rus_04}, e.g. Quicksort algorithm \cite{Fill_Jan_01}, see \cite{Aldo_Band_05, Alsm_Mein_10b, Iksanov_04, Jel_Olv_11, Jel_Olv_10, Liu_00, Rosler_93, Ros_Rus_01} for additional references. In addition, equation \eqref{eq:IntroLinear} generalizes other well studied problems in the literature, e.g.: for $N \equiv 1$, it reduces to an autoregressive process of order one and for $C_i \equiv$ constant, $R$ represents the busy period of an M/G/1 queue (e.g. see \cite{Zwart_01}). In the context of Google's PageRank algorithm, $R$ represents the rank of a generic page,
$N$ is the number of neighbors of such a page, and the $\{C_i\}$ are the weights that determine the contribution 
of each neighboring page to the total rank $R$.
Here, we argue that if the pointer by neighbor $i$ represents a negative reference, then the weight $C_i$ 
of such a reference should be negative as well, i.e., negative references should not increase the rank of $R$.
Hence, in this paper, we allow the weights $\{C_i\}$ to be possibly negative. 

Note that the majority of the work in the rest of the paper goes into the application of the main theorem to 
the nonhomogeneous recursion in \eqref{eq:IntroLinear}. In this regard, in Section~\ref{S.LinearRec}, we first construct an explicit solution \eqref{eq:ExplicitConstr} to \eqref{eq:IntroLinear} on a weighted branching tree and then provide sufficient conditions for the finiteness of moments of this solution in Lemma~\ref{L.Moments_R}. In addition, under quite general conditions, it can be shown that this solution is unique under iterations, see Lemma~4.5 in \cite{Jel_Olv_11}. However, the fixed point equation \eqref{eq:IntroLinear} can have additional stable solutions, as it was recently discovered in \cite{Alsm_Mein_10b}; earlier work for the case when $\{C_i\}, Q$ are deterministic real-valued constants can be found in \cite{Alsm_Rosl_05}. Furthermore, it is worth noting that our moment estimates are explicit, see Lemma \ref{L.GeneralMoment}, which may be of independent interest. Then, the main result, which characterizes the power-tail behavior of $R$ is presented in Theorem~\ref{T.LinearRecursion}. In addition, for integer power exponent ($\alpha \in \{ 1, 2, 3, \dots\}$) the asymptotic tail behavior can be explicitly computed, see Corollary~4.9 in  \cite{Jel_Olv_11}. Furthermore, for non integer $\alpha$, Lemma \ref{L.Alpha_Moments} 
can be used to derive an explicit bound on the tail behavior of $R$.

Similarly as in \cite{Jel_Olv_11}, our technique could be potentially applied to study the tail asymptotics of the solution to the critical, $E\left[ \sum_{i=1}^N C_i \right] = 1$, homogeneous linear equation 
\begin{equation} \label{eq:IntroLinearHomog} 
R \stackrel{\mathcal{D}}{=} \sum_{i=1}^N C_i R_i,
\end{equation}
where $\{C_i\}_{i=1}^N$ is a real valued random vector with $N \in \mathbb{N} \cup \{\infty\}$ and $\{R_i\}_{i\geq 1}$ is a sequence of iid random variables independent of $(N, C_1,\dots, C_N)$ having the same distribution as $R$; note that \cite{Jel_Olv_11} considered the nonnegative $\{C_i\}_{i=1}^N$ case.
See \cite{Liu_00, Iksanov_04, Negadailov_10} and the references therein for prior work on the power tail asymptotics of the homogeneous linear recursion. 
For additional references on weighted branching processes and multiplicative cascades see \cite{Als_Big_Mei_10, Alsm_Kuhl_07, Liu_00, Liu_98} and the references therein. For earlier historical references see \cite{Kah_Pey_76, Biggins_77, Holl_Ligg_81, Durr_Ligg_83}. 
In the same fashion, one can also study many other possibly nonlinear distributional 
equations, e.g.,
\begin{equation} \label{eq:IntroMaximum}
R \stackrel{\mathcal{D}}{=} \left(\bigvee_{i=1}^N C_i R_i \right) \vee Q,\quad 
R \stackrel{\mathcal{D}}{=} \left(\bigvee_{i=1}^N C_i R_i \right) + Q, \quad R \stackrel{\mathcal{D}}{=} \left(\sum_{i=1}^N C_i R_i \right) \vee Q;
\end{equation}
see \cite{Jel_Olv_11} for additional details on how Theorem~\ref{T.NewGoldie} can be applied to these,
as well as other stochastic recursions.
The majority of the proofs are postponed to Section~\ref{S.Proofs}.

\section{Model description} \label{S.ModelDescription}

First we construct a random tree $\mathcal{T}$. We use the notation $\emptyset$ to denote the root node of $\mathcal{T}$, and $A_n$, $n \geq 0$, to denote the set of all individuals in the $n$th generation of $\mathcal{T}$, $A_0 = \{\emptyset\}$. Let $Z_n$ be the number of individuals in the $n$th generation, that is, $Z_n = |A_n|$, where $| \cdot |$ denotes the cardinality of a set; in particular, $Z_0 = 1$. 

Next, let $\mathbb{N}_+ = \{1, 2, 3, \dots\}$ be the set of positive integers and let $U = \bigcup_{k=0}^\infty (\mathbb{N}_+)^k$ be the set of all finite sequences ${\bf i} = (i_1, i_2, \dots, i_n) \in U$, where by convention $\mathbb{N}_+^0 = \{ \emptyset\}$ contains the null sequence $\emptyset$. To ease the exposition, for a sequence ${\bf i} = (i_1, i_2, \dots, i_k) \in U$ we write ${\bf i}|n = (i_1, i_2, \dots, i_n)$, provided $k \geq n$, and  ${\bf i}|0 = \emptyset$ to denote the index truncation at level $n$, $n \geq 0$. Also, for ${\bf i} \in A_1$ we simply use the notation ${\bf i} = i_1$, that is, without the parenthesis. Similarly, for ${\bf i} = (i_1, \dots, i_n)$ we will use $({\bf i}, j) = (i_1,\dots, i_n, j)$ to denote the index concatenation operation, if ${\bf i} = \emptyset$, then $({\bf i}, j) = j$. 

We iteratively construct the tree as follows. Let $N$ be the number of individuals born to the root node $\emptyset$, $N_\emptyset = N$, and let $\{N_{\bf i} \}_{{\bf i} \in U}$ be iid copies of $N$. Define now 
\begin{equation} \label{eq:AnDef}
A_1 = \{ i: 1 \leq i \leq N \}, \quad A_n = \{ (i_1, i_2, \dots, i_n): (i_1, \dots, i_{n-1}) \in A_{n-1}, 1 \leq i_n \leq N_{(i_1, \dots, i_{n-1})} \}.
\end{equation}
It follows that the number of individuals $Z_n = |A_n|$ in the $n$th generation, $n \geq 1$, satisfies the branching recursion 
$$Z_{n} = \sum_{{\bf i } \in A_{n-1}} N_{\bf i}.$$ 

\begin{center}
\begin{figure}[t]
\begin{picture}(430,160)(0,0)
\put(0,0){\includegraphics[scale = 0.8, bb = 0 510 500 700, clip]{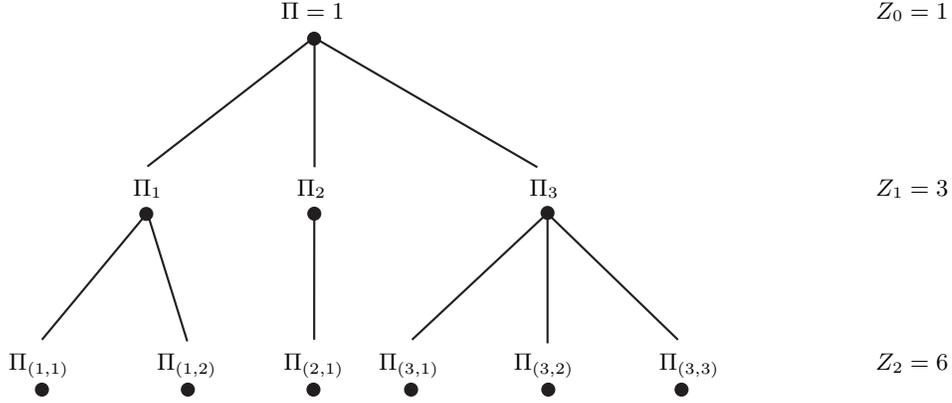}}
\put(125,150){\small $\Pi = 1$}
\put(69,83){\small $\Pi_{1}$}
\put(131,83){\small $\Pi_{2}$}
\put(219,83){\small $\Pi_{3}$}
\put(22,17){\small $\Pi_{(1,1)}$}
\put(78,17){\small $\Pi_{(1,2)}$}
\put(126,17){\small $\Pi_{(2,1)}$}
\put(162,17){\small $\Pi_{(3,1)}$}
\put(213,17){\small $\Pi_{(3,2)}$}
\put(268,17){\small $\Pi_{(3,3)}$}
\put(350,150){\small $Z_0 = 1$}
\put(350,83){\small $Z_1 = 3$}
\put(350,17){\small $Z_2 = 6$}
\end{picture}
\caption{Weighted branching tree}\label{F.Tree}
\end{figure}
\end{center}

Now, we construct the weighted branching tree $\mathcal{T}_{Q,C}$ as follows. Let $\{ (Q_{\bf i}, N_{\bf i}, C_{({\bf i}, 1)}, \dots, C_{({\bf i}, N_{\bf i})}) \}_{{\bf i} \in U}$ be a sequence of iid copies of $(Q, N, C_1, \dots, C_N)$. $N_\emptyset$ determines the number of nodes in the first generation of of $\mathcal{T}$ according to \eqref{eq:AnDef}, and each node in the first generation is then assigned its corresponding vector $(Q_i, N_i, C_{(i,1)}, \dots, C_{(i,N_i)})$ from the iid sequence defined above. In general, for $n \geq 2$, to each node ${\bf i} \in A_{n-1}$ we assign its corresponding $(Q_{\bf i}, N_{\bf i}, C_{({\bf i}, 1)}, \dots, C_{({\bf i}, N_{\bf i})})$ from the sequence and construct
$A_{n} = \{({\bf i}, i_{n}): {\bf i} \in A_{n-1}, 1 \leq i_{n} \leq N_{\bf i}\}$. 
For each node in $\mathcal{T}_{Q,C}$ we also define the weight $\Pi_{(i_1,\dots,i_n)}$ via the recursion
$$ \Pi_{i_1} =C_{i_1}, \qquad \Pi_{(i_1,\dots,i_n)} = C_{(i_1,\dots, i_n)} \Pi_{(i_1,\dots,i_{n-1})}, \quad n \geq 2,$$
where $\Pi =1$ is the weight of the root node. Note that the weight $\Pi_{(i_1,\dots, i_n)}$ is equal to the product of all the weights $C_{(\cdot)}$ along the branch leading to node $(i_1, \dots, i_n)$, as depicted in Figure \ref{F.Tree}.  
In some places, e.g. in the following section, the value of $Q$ may be of no importance, and thus we will consider a 
weighted branching tree defined by the smaller vector $(N, C_1, \dots, C_N)$.
This tree can be obtained form $\mathcal{T}_{Q,C}$ by simply disregarding the values for $Q_{(\cdot)}$ and is denoted by $\mathcal{T}_C$.

Studying recursions and fixed point equations embedded in this weighted branching tree is the objective of this paper.

\section{Implicit renewal theorem on trees} \label{S.Renewal}

In this section we present an extension of Goldie's Implicit Renewal Theorem \cite{Goldie_91} to weighted branching trees with real valued weights $\{C_i\}$. The key observation that facilitates this generalization is the following lemma which shows that a certain measure on a tree is a matrix product measure; its proof is given in Section~\ref{SS.ProofsRenewal}.  For the case of positive weights, a similar observation was made for a scalar measure in \cite{Biggins_Kyprianou_77}. Throughout the paper we use the standard convention $0^\alpha \log 0 = 0$ for all $\alpha > 0$.

Let ${\bf F} = (F_{ij} )$ be an $n\times n$ matrix whose elements are finite nonnegative measures 
concentrated on $\mathbb{R}$. The convolution ${\bf F} * {\bf G}$ of two such matrices is the matrix with 
elements $({\bf F} * {\bf G})_{ij} \triangleq \sum_{k=1}^n F_{ik} * G_{kj}$, $j = 1, \dots, n$, where $F_{ik}*G_{kj}$ is the convolution of individual measures. 

\begin{defn}
A matrix renewal measure is the matrix of measures
$${\bf U} = \sum_{k=0}^\infty {\bf F}^{*k},$$
where ${\bf F}^{*1} = {\bf F}$, ${\bf F}^{*(k+1)} = {\bf F}^{*k} * {\bf F} = {\bf F} * {\bf F}^{*k}$, ${\bf F}^{*0} = \delta_0 {\bf I}$, $\delta_0$ is the point measure at 0, and ${\bf I}$ is the identity $n \times n$ matrix. 
\end{defn}

\begin{defn}
A distribution $F$ on $\mathbb{R}$ is said to be {\em lattice} if it is concentrated on a set that forms an arithmetic progression, that is, on a set of points of the form $a + j\lambda$, where $a \in \mathbb{R}$, $\lambda > 0$ are constant numbers and $j \in \mathbb{Z} = \{0, \pm 1, \pm 2, \dots\}$. The largest number $\lambda$ with this property is called the span of $F$. A distribution that is not lattice is said to be  nonlattice. 
\end{defn}

\begin{lem} \label{L.RenewalMeasure}
Let  $\mathcal{T}_{C}$ be the weighted branching tree defined by the vector $(N,C_1, \dots, C_N)$, where $N \in \mathbb{N} \cup \{\infty\}$ and the $\{C_i\}$ are real valued.  
For any $n \in \mathbb{N}$ and ${\bf i} \in A_n$, let $V_{{\bf i}} = \log { |\Pi_{\bf i}|}$ and $X_{\bf i} = \sgn (\Pi_{\bf i})$; $V_\emptyset \equiv 0$, $X_\emptyset \equiv 1$. For $\alpha > 0$ define the measures
\begin{align*}
\mu_n^{(+)}(dt) &= e^{\alpha t} E\left[  \sum_{{\bf i} \in A_{n}} 1(X_{\bf i} = 1, V_{\bf i} \in dt )   \right], \\
\mu_n^{(-)}(dt) &= e^{\alpha t} E\left[  \sum_{{\bf i} \in A_{n}} 1(X_{\bf i} = -1, V_{\bf i}  \in dt )   \right],
\end{align*}
for $n = 0, 1,2,\dots$, and let $\eta_\pm(dt) = \mu_1^{(\pm)}(dt)$.  Suppose that $E\left[ \sum_{i=1}^N |C_i|^\alpha \log |C_i|  \right] \geq 0$ and $E\left[ \sum_{i=1}^N |C_i|^\alpha \right] = 1$. Then, $(\eta_+ + \eta_-)(\cdot)$ is a probability measure on $\mathbb{R}$ that places no mass at $-\infty$, and has mean
$$\int_{-\infty}^\infty u\, \eta_+(du) + \int_{-\infty}^\infty u\, \eta_-(du) = E\left[ \sum_{j=1}^N |C_j|^\alpha \log |C_j|  \right] .$$
Furthermore, if we let $\boldsymbol{\mu}_n = (\mu_n^{(+)}, \mu_n^{(-)})$, ${\bf e} = (1, 0)$ and ${\bf H} =  \left( \begin{matrix} 
\eta_+ & \eta_- \\ \eta_- & \eta_+ \end{matrix} \right)$, then
\begin{equation} \label{eq:convolution}
\boldsymbol{\mu}_n = (\mu_n^{(+)}, \mu_n^{(-)}) =  (1, 0)  \left( \begin{matrix} 
\eta_+ & \eta_- \\ \eta_- & \eta_+ \end{matrix} \right)^{*n} = {\bf e}  {\bf H}^{*n},
\end{equation}
where ${\bf H}^{*n}$ denotes the $n$th matrix convolution of ${\bf H}$ with itself.
\end{lem}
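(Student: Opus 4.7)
The plan is to separately verify the three claims: the probability-measure property of $\eta_+ + \eta_-$, its mean, and the matrix-convolution identity \eqref{eq:convolution}. The first two reduce to direct computations; the third is an induction on $n$ driven by the branching structure of $\mathcal{T}_C$.

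For the probability and mean claims, I would start from
$$(\eta_+ + \eta_-)(dt) = e^{\alpha t} E\left[\sum_{i=1}^N 1(V_i \in dt)\right].$$
Since $V_i = -\infty$ exactly when $C_i = 0$ and $e^{\alpha t}\to 0$ as $t\to -\infty$, this measure is concentrated on $\mathbb{R}$ with total mass $E[\sum_i |C_i|^\alpha] = 1$. For the mean, the same representation gives $\int u\,(\eta_++\eta_-)(du) = E[\sum_i V_i\,|C_i|^\alpha \ind(C_i\neq 0)] = E[\sum_i |C_i|^\alpha \log|C_i|]$, which is $\geq 0$ by hypothesis; here I use the convention $0^\alpha\log 0 = 0$ to absorb the zero-weight terms.

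For identity \eqref{eq:convolution} I would induct on $n$. The base case $n=0$ is immediate: $V_\emptyset = 0$ and $X_\emptyset = 1$ give $\boldsymbol{\mu}_0 = (\delta_0, 0) = \mathbf{e}\cdot\delta_0\mathbf{I} = \mathbf{e}\,\mathbf{H}^{*0}$. The inductive step uses the decomposition of $\mathcal{T}_C$ into the $N$ subtrees rooted at the children of $\emptyset$, each an iid copy of $\mathcal{T}_C$, independent of $(N,C_1,\dots,C_N)$. Writing $\mathbf{i} = (i_1,\mathbf{j})$ with $\mathbf{j}\in A_{n-1}$ of the $i_1$-subtree gives
$$V_\mathbf{i} = \log|C_{i_1}| + V_\mathbf{j}^{(i_1)}, \qquad X_\mathbf{i} = \sgn(C_{i_1})\, X_\mathbf{j}^{(i_1)}.$$
Partitioning $\{X_\mathbf{i} = 1\}$ into $\{C_{i_1}>0,\,X_\mathbf{j}^{(i_1)}=1\}\cup\{C_{i_1}<0,\,X_\mathbf{j}^{(i_1)}=-1\}$, conditioning on the first generation, and applying Tonelli, the weight $e^{\alpha t}$ splits as $e^{\alpha \log|C_{i_1}|}\cdot e^{\alpha(t-\log|C_{i_1}|)}$, with the first factor feeding into $\eta_\pm$ and the second into $\mu_{n-1}^{(\pm)}$. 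This yields
$$\mu_n^{(+)} = \eta_+ * \mu_{n-1}^{(+)} + \eta_- * \mu_{n-1}^{(-)}, \qquad \mu_n^{(-)} = \eta_- * \mu_{n-1}^{(+)} + \eta_+ * \mu_{n-1}^{(-)},$$
i.e. $\boldsymbol{\mu}_n = \boldsymbol{\mu}_{n-1} * \mathbf{H}$, and the induction hypothesis $\boldsymbol{\mu}_{n-1} = \mathbf{e}\,\mathbf{H}^{*(n-1)}$ closes the step.

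The main obstacle is purely bookkeeping: one has to keep track of how $\sgn(C_{i_1})$ pairs with the subtree sign $X_\mathbf{j}^{(i_1)}$ to produce exactly the four entries of $\mathbf{H}$, and ensure that Tonelli is legitimately applied inside the expectation over an a priori unbounded random sum. The $C_i = 0$ atoms drop out harmlessly thanks to the convention $0^\alpha\log 0 = 0$ and the fact that $e^{\alpha t}$ kills the $-\infty$ mass, so no truncation or limiting argument is needed.
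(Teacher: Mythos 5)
Your proposal is correct, and the first two claims (total mass and mean of $\eta_++\eta_-$) are handled exactly as in the paper, by Fubini/Tonelli and the convention $0^\alpha\log 0=0$; the only nitpick is that $1(X_i=1)+1(X_i=-1)=1(C_i\neq 0)$ rather than $1$, but since zero-weight children carry no mass on $\mathbb{R}$ this changes nothing. Where you genuinely diverge is in the induction for \eqref{eq:convolution}: you peel off the \emph{first} generation, writing ${\bf i}=(i_1,{\bf j})$ and using that the subtrees rooted at the children of $\emptyset$ are iid copies of $\mathcal{T}_C$ independent of $(N,C_1,\dots,C_N)$, whereas the paper peels off the \emph{last} generation, writing ${\bf i}\in A_{n+1}$ as $({\bf i}',j)$ with ${\bf i}'\in A_n$ and conditioning on the filtration $\mathcal{F}_n$ of the first $n$ generations. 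Your root decomposition naturally yields the recursion in the form $\boldsymbol{\mu}_n^{T}={\bf H}*\boldsymbol{\mu}_{n-1}^{T}$ (left multiplication), and it matches the claimed form $\boldsymbol{\mu}_n=\boldsymbol{\mu}_{n-1}*{\bf H}={\bf e}\,{\bf H}^{*n}$ only because scalar convolution commutes and ${\bf H}$ is symmetric (equivalently, because ${\bf H}^{*(n-1)}*{\bf H}={\bf H}*{\bf H}^{*(n-1)}$); it is worth stating this one line explicitly, since matrix convolution is not commutative in general. What each route buys: your version exploits the distributional self-similarity of the tree and mirrors the fixed-point structure of the recursions studied later, while the paper's last-generation version uses precisely the conditioning on $\mathcal{F}_k$ that reappears in the telescoping-sum argument of Theorem~\ref{T.NewGoldie}, so the lemma plugs into that proof with no change of bookkeeping. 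Both are fully rigorous; Tonelli is indeed automatic since all integrands are nonnegative.
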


We now present a generalization of Goldie's Implicit Renewal Theorem \cite{Goldie_91} that will enable the analysis of recursions on weighted branching trees. Note that except for the independence assumption, the random variable $R$ and the vector $(N, C_1, \dots, C_N)$ are arbitrary, and therefore the applicability of this theorem goes beyond the linear recursion that we study here.

\begin{thm} \label{T.NewGoldie}
Let $(N,C_1, \dots, C_N)$ be a random vector, where $N \in \mathbb{N} \cup \{\infty\}$ and the $\{C_i\}$ are real valued. 
Suppose that there exists $j \geq 1$ with $P(N\ge j,|C_j|>0)>0$ such that the measure $P(\log |C_j|\in du, |C_j| > 0, N\ge j)$ is nonlattice.
Assume further that $E\left[ \sum_{j=1}^N |C_j|^\alpha \log |C_j|  \right] > 0$, $E\left[ \sum_{j=1}^N |C_j|^\alpha \right] = 1$, $E\left[ \sum_{j=1}^N |C_j|^\gamma \right] < \infty$ for some $0 \leq \gamma < \alpha$, and that $R$ is independent of $(N, C_1, \dots, C_N)$. 
\begin{enumerate}
\item If $\{ C_i \} \geq 0$ a.s., $E[((R)^+)^\beta] < \infty$ for any $0< \beta < \alpha$, and 
\begin{equation} \label{eq:Goldie_condition1}
\int_0^\infty \left| P(R > t) - E\left[ \sum_{j=1}^N 1(C_j R > t) \right] \right| t^{\alpha-1} dt < \infty,
\end{equation}
or, respectively, $E[((R^-)^\beta] < \infty$ for any $0< \beta < \alpha$, and
\begin{equation} \label{eq:Goldie_condition2}
\int_0^\infty \left| P(R < -t) - E\left[ \sum_{j=1}^N 1(C_j R <- t ) \right] \right| t^{\alpha-1} dt < \infty,
\end{equation}
then
$$P(R > t) \sim H_+ t^{-\alpha}, \qquad t \to \infty,$$
or, respectively,
$$P(R < -t) \sim H_- t^{-\alpha}, \qquad t \to \infty,$$
where $0 \leq H_\pm < \infty$ are given by
\begin{align*}
H_\pm &= \frac{1}{E\left[ \sum_{j=1}^N |C_j|^\alpha \log |C_j|  \right] } \int_{0}^\infty v^{\alpha-1} \left( P((\pm 1) R > v) - E\left[ \sum_{j=1}^{N} 1((\pm 1) C_{j} R > v ) \right]    \right) dv .
\end{align*}
\item If $P(C_j < 0) > 0$ for some $j \geq 1$,  $E[|R|^\beta] < \infty$ for any $0< \beta < \alpha$, and both \eqref{eq:Goldie_condition1} and \eqref{eq:Goldie_condition2} are satisfied, then
$$P(R > t) \sim P(R < -t) \sim H t^{-\alpha}, \qquad t \to \infty,$$
where $0 \leq H = (H_+ + H_-)/2 < \infty$ is given by
\begin{align*}
H &= \frac{1}{2E\left[ \sum_{j=1}^N |C_j|^\alpha \log |C_j|  \right] } \int_{0}^\infty v^{\alpha-1} \left( P(|R| > v) - E\left[ \sum_{j=1}^{N} 1(|C_{j} R| > v ) \right]    \right) dv .
\end{align*}
\end{enumerate}
\end{thm}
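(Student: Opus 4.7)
The plan is to reduce the problem to a two-dimensional matrix renewal equation and invoke the Key Renewal Theorem, leveraging Lemma \ref{L.RenewalMeasure} to identify the relevant matrix renewal measure with the tree measures. Using independence of $R$ from $(N,C_1,\ldots,C_N)$ and conditioning on $\sgn(C_j)$, with $\bar F_\pm(t):=P(\pm R > t)$ for $t>0$,
\[
E\Bigl[\sum_{j=1}^N \mathbf{1}(C_j R > t)\Bigr] = E\Bigl[\sum_{j=1}^N \bigl(\mathbf{1}(C_j > 0) \bar F_+(t/|C_j|) + \mathbf{1}(C_j < 0) \bar F_-(t/|C_j|)\bigr)\Bigr],
\]
together with an analogous expression for $E[\sum_j \mathbf{1}(C_j R < -t)]$ (with $\bar F_+, \bar F_-$ interchanged). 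The substitution $u=\log t$, $r_\pm(u):=e^{\alpha u}\bar F_\pm(e^u)$ and $g_\pm(u):=e^{\alpha u}(\bar F_\pm(e^u) - E[\sum_j \mathbf{1}(\pm C_j R > e^u)])$ converts hypotheses \eqref{eq:Goldie_condition1}--\eqref{eq:Goldie_condition2} into $g_\pm \in L^1(\mathbb R)$ and yields the matrix renewal equation
\[
\vec r(u) = (\mathbf H * \vec r)(u) + \vec g(u), \qquad \mathbf H = \begin{pmatrix} \eta_+ & \eta_- \\ \eta_- & \eta_+ \end{pmatrix},
\]
with $\eta_\pm$ exactly those of Lemma \ref{L.RenewalMeasure}.

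Iterating $n$ times gives $\vec r = \mathbf H^{*n} * \vec r + \sum_{k=0}^{n-1} \mathbf H^{*k} * \vec g$, and Lemma \ref{L.RenewalMeasure} identifies $\mathbf H^{*n}$ with the tree measure $\boldsymbol\mu_n$, giving the partial sums a concrete probabilistic meaning in terms of the weights $\Pi_{\bf i}$ on $A_n$. To extract asymptotics I diagonalize via $s:=r_++r_-$ and $d:=r_+-r_-$, which decouples the system into
\begin{align*}
s &= (\eta_+ + \eta_-) * s + (g_+ + g_-),\\
d &= (\eta_+ - \eta_-) * d + (g_+ - g_-).
\end{align*}
The kernel $\eta_+ + \eta_-$ is a nonlattice probability measure on $\mathbb R$ with finite positive mean $E[\sum_j |C_j|^\alpha \log|C_j|]$ (by Lemma \ref{L.RenewalMeasure} and the hypotheses), so the Key Renewal Theorem — after a standard Goldie-style smoothing of the inhomogeneous term to secure direct Riemann integrability — yields $s(u) \to H_+ + H_-$ as $u \to \infty$. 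In Case 1 ($C_j \geq 0$ a.s.) one has $\eta_- \equiv 0$, $\mathbf H$ is diagonal, and the argument just sketched applies componentwise to $r_+$ and $r_-$ separately, delivering $H_\pm$. In Case 2 the signed kernel $\eta_+ - \eta_-$ in the $d$-equation has total mass $E[\sum_j \sgn(C_j) |C_j|^\alpha]$ of modulus at most $1$; a signed-measure renewal analysis — or, equivalently, one further iteration of the matrix equation which replaces $\mathbf H$ by the entrywise nonnegative $\mathbf H^{*2}$ with row-sum $1$ — forces $d(u) \to 0$, so that $r_\pm(u) \to (H_++H_-)/2 = H$.

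The principal obstacle is controlling the remainder $\mathbf H^{*n} * \vec r$ in the iteration and securing direct Riemann integrability of $\vec g$. A priori $\vec r$ is bounded only by $C e^{(\alpha - \beta)u}$ via Markov with $E[|R|^\beta] < \infty$ for $\beta < \alpha$, which does not vanish at infinity; the remedy, following Goldie, is to convolve the whole renewal equation with $\mathbf{1}_{[0,T]}$, apply the Key Renewal Theorem to the smoothed (now d.R.i.) equation, divide by $T$, and let $T \to \infty$. The second and more delicate issue, specific to Case 2, is the precise handling of the $d$-equation when $\eta_+ - \eta_-$ is signed: existence of a limit rather than oscillation must be established. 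The cleanest route is through $\mathbf H^{*2}$, whose nonnegative entries and structural symmetry (invariance under permuting the $\pm$ labels) force equal limits for $r_+$ and $r_-$; the tree-measure identification of Lemma \ref{L.RenewalMeasure} is exactly what makes this matrix argument extend Goldie's scalar framework to the branching setting.
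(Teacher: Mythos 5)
Your setup and your Case 1 are essentially the paper's argument: the one-step identity $\vec r=\mathbf H*\vec r+\vec g$ with the $\eta_\pm$ of Lemma \ref{L.RenewalMeasure}, iteration over tree generations, a Goldie-type smoothing to make the inhomogeneous term directly Riemann integrable, the scalar key renewal theorem, and a de-smoothing step (the paper telescopes over generations, smooths with the exponential kernel $\breve f(t)=\int_{-\infty}^t e^{-\beta(t-u)}f(u)\,du$, and finishes with the monotone density Lemma \ref{L.Derivative}). Two minor repairs there: your de-smoothing should shrink the averaging window (or use the exponential smoothing and Lemma \ref{L.Derivative}); letting $T\to\infty$ after dividing by $T$ loses the constant, since the sandwich bounds obtained from monotonicity of $P(R>e^u)$ only pinch to $H$ as the window tends to $0$. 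Also, smoothing does not by itself kill the remainder $\mathbf H^{*n}*\vec r$: on the tree this term carries the factor $\left(E\left[\sum_{j=1}^N|C_j|^\beta\right]\right)^n$, so you must first produce some $\beta<\alpha$ with $E\left[\sum_{j=1}^N|C_j|^\beta\right]<1$ (convexity of $\beta\mapsto E[\sum_j|C_j|^\beta]$, using the hypotheses $E[\sum_j|C_j|^\gamma]<\infty$ for some $\gamma<\alpha$ and $E[\sum_j|C_j|^\alpha\log|C_j|]>0$); this is where those hypotheses enter and it is the tree-specific step you leave implicit.

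The genuine gap is Case 2, i.e.\ precisely the part that goes beyond \cite{Jel_Olv_11}. After diagonalizing, $d=r_+-r_-$ solves a renewal equation with the \emph{signed} kernel $\eta_+-\eta_-$, and neither of your two fixes works as stated. A ``signed-measure renewal analysis'' based on $|p-q|<1$ fails because the relevant object is the resolvent $\sum_{k\ge0}(\eta_+-\eta_-)^{*k}$, and $\|\eta_+-\eta_-\|_{TV}$ can equal $p+q=1$ (e.g.\ when $\eta_+\perp\eta_-$), so the Neumann series need not converge in total variation and you cannot conclude $d(u)\to0$ by integrating a vanishing function against a finite signed measure. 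The alternative ``one further iteration replacing $\mathbf H$ by the entrywise nonnegative $\mathbf H^{*2}$'' is vacuous: $\mathbf H$ is already entrywise nonnegative (the signedness lives only in the diagonalized $d$-kernel, and $(\eta_+-\eta_-)^{*2}$ is still signed), and the structural symmetry you invoke holds for $\mathbf H$ itself yet does not force equal limits --- symmetry only gives $\breve r_+-\breve r_-=(\nu^{(+)}-\nu^{(-)})*(\breve g_+-\breve g_-)$, and showing this tends to $0$ is exactly the open issue. What is needed is a key renewal theorem for the irreducible nonnegative $2\times2$ matrix of measures $\mathbf H$ (equivalently, the Markov renewal theorem for the two-state sign chain with transition masses $p,q$), which is how the paper proceeds: it applies Theorem 4 of \cite{Sgi_06} to get $(\mathbf U*\mathbf g)(t)\to\frac{(1,1)^T(1,1)}{2\mu}\int_{-\infty}^\infty\mathbf g(u)\,du$, and it is the rank-one, equal-entry structure of this limit that simultaneously yields existence of the limit and $P(R>t)\sim P(R<-t)\sim Ht^{-\alpha}$. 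Without such a matrix/Markov renewal theorem (or an argument of the type in Goldie's Case 2), your claim that $d(u)\to0$ is unsupported, and this is the crux of the mixed-sign extension rather than a technicality.
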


\begin{rem}
(i) As pointed out in \cite{Goldie_91}, the statement of the theorem only has content when $R^+$, $R^-$ or $|R|$, respectively, has infinite moments of order $\alpha$, since otherwise $H_+$, $H_-$ or $H$, respectively, are zero. (ii) Note that the case of nonnegative weights $\{C_i\} \geq 0$ a.s. was recently proved in Theorem~3.2 in \cite{Jel_Olv_11}. Here, in the proof of Theorem~\ref{T.NewGoldie} we refer to it as Case a), and provide an alternative proof that does not require the finiteness of $E\left[ \sum_{j=1}^N |C_j|^\alpha \log|C_j| \right]$; when this expectation is infinite the constants $H_\pm, H$ are zero which can be interpreted as $R$ having lighter tails than $t^{-\alpha}$. (iii) We also point out that our proof provides a streamlined derivation of the classical theorem of Goldie \cite{Goldie_91} ($N = 1$) through the use of a matrix renewal measure. (iv) Note that in both cases, (a) and (b), provided that \eqref{eq:Goldie_condition1} and \eqref{eq:Goldie_condition2} hold, we have
$$P(|R| > t) \sim (H_++H_-) t^{-\alpha}, \qquad \text{as } t \to \infty.$$ (v) It appears, as noted in \cite{Goldie_91}, that some of the early ideas of applying renewal theory to study the power tail asymptotics of autoregressive processes (perpetuities) is due to \cite{Grincevicius_75}. 
\end{rem}

We give below the corresponding theorem for the lattice case.

\begin{thm} \label{T.NewGoldie_Lattice}
Let $(N,C_1, \dots, C_N)$ be a random vector, where $N \in \mathbb{N} \cup \{\infty\}$ and the $\{C_i\}$ are real valued random variables such that for all $i$, given $|C_i| > 0$,  $ \log |C_i|  \subseteq L$, where $L = \{ \lambda j: j \in \mathbb{Z}\}$ for some $\lambda > 0$.
Assume further that $E\left[ \sum_{j=1}^N |C_j|^\alpha \log |C_j|  \right] > 0$, $E\left[ \sum_{j=1}^N |C_j|^\alpha \right] = 1$, $E\left[ \sum_{j=1}^N |C_j|^\gamma \right] < \infty$ for some $0 \leq \gamma < \alpha$, and that $R$ is independent of $(N, C_1, \dots, C_N)$. 
\begin{enumerate}
\item If $\{ C_i \} \geq 0$ a.s., $E[((R)^+)^\beta] < \infty$ for any $0< \beta < \alpha$, and 
\begin{equation} \label{eq:Goldie_condition1_Discrete}
\int_0^\infty \left| P(R > t) - E\left[ \sum_{j=1}^N 1(C_j R > t ) \right] \right| t^{\alpha-1} dt < \infty,
\end{equation}
or, respectively, $E[((R^-)^\beta] < \infty$ for any $0< \beta < \alpha$, and
\begin{equation} \label{eq:Goldie_condition2_Discrete}
\int_0^\infty \left| P(R < -t) - E\left[ \sum_{j=1}^N 1(C_j R <- t ) \right] \right| t^{\alpha-1} dt < \infty,
\end{equation}
then, for almost every $t \in \mathbb{R}$ (with respect to the Lebesgue measure), 
$$P(R > e^{t+\lambda n}) \sim H_+(t)  e^{-\alpha(t+\lambda n)}, \qquad n \to \infty,$$
or, respectively,
$$P(R < -e^{t+\lambda n}) \sim H_-(t)  e^{-\alpha(t+\lambda n)}, \qquad n \to \infty,$$
where $0 \leq H_\pm(t) < \infty$ are given by
\begin{align*}
H_\pm(t) &= \frac{\lambda}{E\left[ \sum_{j=1}^N |C_j|^\alpha \log |C_j|  \right] }\sum_{k=-\infty}^\infty e^{\alpha(t+k\lambda)} \left( P((\pm 1)R > e^{t+k\lambda}) - E\left[ \sum_{j=1}^N 1((\pm 1)C_j R > e^{t+k\lambda} ) \right] \right)  .
\end{align*}
\item If $P(C_j < 0) > 0$ for some $j \geq 1$,  $E[|R|^\beta] < \infty$ for any $0< \beta < \alpha$, and both \eqref{eq:Goldie_condition1} and \eqref{eq:Goldie_condition2} are satisfied, then, for almost every $t \in \mathbb{R}$ (with respect to the Lebesgue measure),
$$P(R > e^{t+\lambda n}) \sim P(R < -e^{t+\lambda n}) \sim H(t) e^{-\alpha(t+\lambda n)}, \qquad n\to \infty,$$
where $0 \leq H(t) = (H_+(t) + H_-(t))/2 < \infty$ is given by
\begin{align*}
H(t) &= \frac{\lambda}{E\left[ \sum_{j=1}^N |C_j|^\alpha \log |C_j|  \right] }\sum_{k=-\infty}^\infty e^{\alpha(t+k\lambda)} \left( P(|R| > e^{t+k\lambda}) - E\left[ \sum_{j=1}^N 1( |C_j R| > e^{t+k\lambda}) \right] \right)   .
\end{align*}
\end{enumerate}
\end{thm}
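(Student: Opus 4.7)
The plan is to follow the matrix-renewal strategy used for Theorem~\ref{T.NewGoldie}, replacing the Key Renewal Theorem on the line by its lattice counterpart (Erd\H{o}s--Feller--Pollard). First I would set $r_\pm(t) = e^{\alpha t}P(\pm R > e^t)$ and $g_\pm(t) = e^{\alpha t}\bigl(P(\pm R > e^t) - E[\sum_{j=1}^N 1(\pm C_j R > e^t)]\bigr)$, and, conditioning on $(N,C_1,\dots,C_N)$ and using the independence of $R$ together with the change of variables $V_j = \log |C_j|$, $X_j = \sgn(C_j)$, rewrite the conditional expectation to produce the vector renewal equation
\[(r_+, r_-) = (g_+, g_-) + (r_+, r_-) * {\bf H},\]
with ${\bf H}$ as in Lemma~\ref{L.RenewalMeasure}; the derivation is identical to that of the non-lattice case. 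Because each $\log|C_j|$ lies in $L = \lambda\mathbb{Z}$ when $|C_j|>0$, both $\eta_\pm$ and every convolution power ${\bf H}^{*n}$ are supported on $L$, so iterating yields $(r_+,r_-) = (g_+,g_-) * {\bf U}$ with ${\bf U} = \sum_{n \ge 0} {\bf H}^{*n}$ a lattice matrix renewal measure.

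Next I would reduce to scalar equations by taking sum and difference. Writing $r_s = r_+ + r_-$, $g_s = g_+ + g_-$, $r_d = r_+ - r_-$, $g_d = g_+ - g_-$, the system decouples as $r_s = g_s + r_s * \nu_1$ and $r_d = g_d + r_d * \nu_2$ with $\nu_1 = \eta_+ + \eta_-$, $\nu_2 = \eta_+ - \eta_-$. By Lemma~\ref{L.RenewalMeasure}, $\nu_1$ is a lattice probability measure on $L$ with positive mean $\mu = E[\sum_j |C_j|^\alpha \log|C_j|]$, and the hypotheses of the theorem, exactly as in the non-lattice proof, give the direct summability of $g_+$ and $g_-$ along every coset $t + L$. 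The lattice Key Renewal Theorem applied to the equation for $r_s$ therefore yields, for a.e.\ $t \in \mathbb{R}$,
\[r_s(t+\lambda n) \longrightarrow \frac{\lambda}{\mu}\sum_{k \in \mathbb{Z}} g_s(t+k\lambda) \qquad \text{as } n \to \infty.\]
In case (a), nonnegativity forces $\eta_- \equiv 0$, so $\nu_1 = \nu_2 = \eta_+$ and the scalar equations for $r_+$ and $r_-$ already decouple; the lattice Key Renewal Theorem applied separately to each yields the stated formulas for $H_\pm(t)$. In case (b), it remains to show $r_d(t+\lambda n) \to 0$, after which $r_\pm(t+\lambda n)$ both converge to the common limit $H(t) = (H_+(t)+H_-(t))/2$.

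I expect the main difficulty to be the $r_d$-equation in case (b), since $\nu_2$ is a signed measure and the scalar lattice Key Renewal Theorem does not apply directly. The cleanest resolution, paralleling the streamlined matrix argument used for Theorem~\ref{T.NewGoldie}, is to regard the total-mass matrix $\bigl(\begin{smallmatrix} a & b \\ b & a \end{smallmatrix}\bigr)$, with $a = \eta_+(\mathbb{R})$ and $b = \eta_-(\mathbb{R})$, as the one-step transition matrix of a symmetric doubly stochastic $2$-state Markov chain whose stationary distribution is $(\tfrac12,\tfrac12)$, and then to invoke the lattice Markov renewal theorem directly on the coupled equation $(r_+,r_-) = (g_+,g_-) * {\bf U}$. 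This produces jointly $(r_+,r_-)(t+\lambda n) \to H(t)(1,1)$, completing case (b). The only remaining technical point is to verify that $(g_+,g_-)$ is directly summable along cosets of $L$, which is inherited from the integrability hypotheses \eqref{eq:Goldie_condition1_Discrete}--\eqref{eq:Goldie_condition2_Discrete} in the same way as in the non-lattice argument.
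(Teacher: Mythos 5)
Your overall architecture matches the paper's: both keep the coupled matrix renewal equation built from ${\bf H}$ and ${\bf U}=\sum_{k\ge 0}{\bf H}^{*k}$ and dispose of case (b) by a lattice matrix (Markov) renewal theorem rather than by your sum/difference decoupling (which you yourself abandon, and which the paper never uses); the paper invokes Sgibnev's Theorem~4, which covers the lattice span-$\lambda$ case. The genuine gap is in how you feed the renewal theorem. You apply it directly to the unsmoothed functions $g_\pm$, asserting that the hypotheses give ``direct summability of $g_+$ and $g_-$ along every coset $t+L$ \dots\ in the same way as in the non-lattice argument.'' Neither half of that claim holds. Conditions \eqref{eq:Goldie_condition1_Discrete}--\eqref{eq:Goldie_condition2_Discrete} only say $g_\pm\in L^1(dt)$; by Fubini this yields $\sum_k|g_\pm(t+k\lambda)|<\infty$ for \emph{almost every} $t$ only, not every $t$, and mere $L^1$ gives no direct-summability/dRi-type control of the sort the renewal theorem requires. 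Moreover, the non-lattice proof never establishes any coset summability of $g_\pm$: it first smooths, $\breve g_\pm(t)=\int_{-\infty}^t e^{-\beta(t-u)}g_\pm(u)\,du$, uses Goldie's Lemma~9.1 to get direct Riemann integrability of $\breve g_\pm$, applies Sgibnev's theorem to the smoothed equation for $\breve r$, and only then recovers the tail of $R$ itself. In the lattice case the paper must additionally de-smooth: it shows $\lim_{h\to 0}(e^{\beta h}G_\pm(t+h)-G_\pm(t))/(\beta h)=H_\pm(t)$ for a.e.\ $t$ via the Lebesgue differentiation theorem and then applies the lattice monotone density lemma (Lemma~\ref{L.DerivativeDiscrete}); this is precisely where the ``almost every $t$'' proviso in the statement originates, a mechanism absent from your proposal.

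Your route could be repaired without smoothing, but that requires work you have not supplied: (i) invoke Fubini to get coset summability for a.e.\ $t$ (and accept that this, not ``every coset,'' is what delivers the a.e.\ qualifier); (ii) justify a lattice matrix renewal theorem for merely summable input, e.g.\ by dominated convergence using boundedness of the renewal masses ${\bf U}(\{\lambda k\})$ together with their convergence to $\tfrac{\lambda}{2\mu}\bigl(\begin{smallmatrix}1&1\\1&1\end{smallmatrix}\bigr)$ (and to $\tfrac{\lambda}{\mu}$ in case (a)); and (iii) handle the remainder term in passing from the finite-$n$ telescoping identity to $(r_+,r_-)=(g_+,g_-)*{\bf U}$, i.e.\ show $\delta_n(t)=e^{\alpha t}E\bigl[\sum_{{\bf i}\in A_n}1(\Pi_{\bf i}R>e^t)\bigr]\to 0$, which the paper proves only after smoothing (a direct Markov-inequality bound with an exponent $\beta<\alpha$ satisfying $E[\sum_j|C_j|^{\beta}]<1$ would do). As written, the key limit $r_\pm(t+\lambda n)\to\frac{\lambda}{\mu}\sum_k g_\pm(t+k\lambda)$ is unsupported, and the stated justification for it is incorrect.
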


\begin{rem}
(i) The absolute integrability conditions \eqref{eq:Goldie_condition1_Discrete} and \eqref{eq:Goldie_condition2_Discrete} can be replaced by
$$\sup_{0\leq t \leq \lambda} \sum_{k=-\infty}^\infty e^{\alpha(t+k\lambda)} \left| P((\pm 1)R > e^{t+k\lambda}) - E\left[ \sum_{j=1}^N 1((\pm 1)C_j R > e^{t + \lambda k}) \right] \right| < \infty.$$
(ii) This theorem can be used to derive the tail behavior of the solutions to a variety of 
fixed point equations under the lattice assumption, e.g., those studied in \cite{Jel_Olv_11} for the 
nonlattice case. In particular, one can obtain an alternative derivation of existing results in the 
literature for the homogeneous equation ($Q=0$) with nonnegative weights ($C_i\ge 0$) under the 
lattice assumption, e.g., see Proposition~7 in \cite{Iksanov_04} and Theorem~29(b) in \cite{Negadailov_10}.
 We refrain from such possible derivations here since our primary 
motivation for this work is the nonhomogeneous linear recursion \eqref{eq:IntroLinear}. In addition, we focus 
on the nonlattice assumption since the results tend to be more explicit.  (iii) Early results for perpetuities ($R \stackrel{\mathcal{D}}{=} CR+Q$) in the lattice case can be found in 
Theorem~2(b) of \cite{Grincevicius_75}.
\end{rem}

Before going into the proof of Theorem \ref{T.NewGoldie} we need the following monotone density lemma, which is taken from \cite{Jel_Olv_11}. Since the proof of the lattice case is very similar to that of Theorem \ref{T.NewGoldie}, we postpone the proof of Theorem \ref{T.NewGoldie_Lattice} to Section \ref{SS.ProofsRenewal}.

\begin{lem} \label{L.Derivative}
Let $\alpha, \beta > 0$ and $0 \leq H < \infty$. Suppose $\int_0^t v^{\alpha+\beta-1} P(R > v) dv \sim H t^{\beta}/\beta$ as $t \to \infty$. Then,
$$P(R > t) \sim H t^{-\alpha}, \qquad t \to \infty.$$
\end{lem}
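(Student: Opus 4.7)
The plan is to use a standard monotone density argument. Write $U(t)=\int_0^t v^{\alpha+\beta-1}P(R>v)\,dv$, so by hypothesis $U(t)\sim H t^\beta/\beta$ as $t\to\infty$. Since $v\mapsto P(R>v)$ is nonincreasing, I will sandwich the increment $U(t(1+\epsilon))-U(t)$ between two weighted integrals that differ only in where the monotone factor is evaluated. This yields, for each fixed $\epsilon>0$,
\begin{equation*}
P(R>t(1+\epsilon))\int_t^{t(1+\epsilon)}\!v^{\alpha+\beta-1}\,dv
\;\le\; U(t(1+\epsilon))-U(t)
\;\le\; P(R>t)\int_t^{t(1+\epsilon)}\!v^{\alpha+\beta-1}\,dv,
\end{equation*}
and the elementary integral on either side evaluates to $t^{\alpha+\beta}\bigl((1+\epsilon)^{\alpha+\beta}-1\bigr)/(\alpha+\beta)$.

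From the hypothesis, $U(t(1+\epsilon))-U(t)\sim (H/\beta)t^\beta\bigl((1+\epsilon)^\beta-1\bigr)$. Dividing the upper bound above by the integral and letting $t\to\infty$ gives
\begin{equation*}
\liminf_{t\to\infty} t^\alpha P(R>t) \;\ge\; \frac{H(\alpha+\beta)\bigl((1+\epsilon)^\beta-1\bigr)}{\beta\bigl((1+\epsilon)^{\alpha+\beta}-1\bigr)}.
\end{equation*}
Taking $\epsilon\downarrow 0$ and applying the Taylor expansions $(1+\epsilon)^\beta-1=\beta\epsilon+o(\epsilon)$ and $(1+\epsilon)^{\alpha+\beta}-1=(\alpha+\beta)\epsilon+o(\epsilon)$ produces $\liminf_{t\to\infty}t^\alpha P(R>t)\ge H$.

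For the matching upper bound, I would repeat the argument on the interval $[t(1-\epsilon),t]$: the same monotonicity sandwich gives
\begin{equation*}
P(R>t) \;\le\; \frac{U(t)-U(t(1-\epsilon))}{\int_{t(1-\epsilon)}^t v^{\alpha+\beta-1}\,dv},
\end{equation*}
and passing to the limit $t\to\infty$ followed by $\epsilon\downarrow 0$ yields $\limsup_{t\to\infty}t^\alpha P(R>t)\le H$. Combining the two bounds establishes $P(R>t)\sim Ht^{-\alpha}$.

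There is no serious obstacle here; this is the classical monotone density theorem specialized to the integrand $v^{\alpha+\beta-1}P(R>v)$. The only mild care needed is to keep track of the two different exponents, $\beta$ (from the integrated asymptotic) and $\alpha+\beta$ (from the power weight in the integrand), and to verify that the small-$\epsilon$ Taylor expansion produces a ratio that cleanly tends to $1$; the $H=0$ case is automatic since then both bounds collapse to $0$.
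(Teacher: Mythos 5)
Your proof is correct, and it is the standard monotone density argument that the paper relies on: the lemma is quoted from \cite{Jel_Olv_11}, and the paper's own proof of the lattice analogue (Lemma \ref{L.DerivativeDiscrete}) uses exactly the same sandwich of the increment of $\int v^{\alpha+\beta-1}P(R>v)\,dv$ over a shrinking window, followed by the limits $t\to\infty$ and then $\epsilon\downarrow 0$. Your handling of the two exponents and of the $H=0$ case is also consistent with that treatment, so nothing further is needed.
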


\bigskip

\begin{proof}[Proof of Theorem \ref{T.NewGoldie}]
Let  $\mathcal{T}_{C}$ be the weighted branching tree defined by the vector $(N,C_1, \dots, C_N)$.  For each ${\bf i} \in A_n$ and all $k \leq n$ define $V_{{\bf i}|k} = \log |\Pi_{{\bf i}|k}|$; note that $\Pi_{{\bf i}|k}$ is independent of $N_{{\bf i}|k}$ but not of $N_{{\bf i}|s}$ for any $0\leq s \leq k-1$. Also note that ${\bf i}|n = {\bf i}$ since ${\bf i} \in A_n$. Let  $\mathcal{F}_{k}$, $k \geq 1$, denote the $\sigma$-algebra generated by $\{ (N_{\bf i}, C_{({\bf i},1)}, \dots, C_{({\bf i}, N_{\bf i})}): {\bf i} \in A_j, 0 \leq j \leq k-1\}$, and let $\mathcal{F}_0 = \sigma(\emptyset, \Omega)$, $\Pi_{{\bf i}|0} \equiv 1$. Assume also that $R$ is independent of the entire weighted tree, $\mathcal{T}_C$.  Then, for any $t \in \mathbb{R}$, we can write $P(R > e^t)$ via a telescoping sum as follows (note that all the expectations in \eqref{eq:telescoping} are finite by Markov's inequality and \eqref{eq:PiMoments})
\begin{align}
&P(R > e^t) \notag \\
&= \sum_{k=0}^{n-1} \left( E\left[ \sum_{({\bf i}|k) \in A_{k}} 1(\Pi_{{\bf i}|k} R > e^t ) \right] - E\left[ \sum_{({\bf i}|k+1) \in A_{k+1}} 1(\Pi_{{\bf i}|k+1} R > e^t) \right]  \right) \label{eq:telescoping} \\
&\hspace{5mm} + E\left[ \sum_{({\bf i}|n) \in A_n} 1(\Pi_{{\bf i}|n} R > e^t ) \right] \notag \\
&= \sum_{k=0}^{n-1}  E\left[ \sum_{({\bf i}|k) \in A_{k}} \left( 1(\Pi_{{\bf i}|k} R > e^t ) -  \sum_{j=1}^{N_{{\bf i}|k}} 1(\Pi_{{\bf i}|k} C_{({\bf i}|k,j)} R > e^t ) \right) \right]  + E\left[ \sum_{({\bf i}|n) \in A_n} 1(\Pi_{{\bf i}|n} R > e^t ) \right] \notag \\
&= \sum_{k=0}^{n-1} E\left[ \sum_{({\bf i}|k) \in A_{k}} \left( 1(X_{{\bf i}|k} = 1,   R > e^{t-V_{{\bf i}|k}} ) -  \sum_{j=1}^{N_{{\bf i}|k}} 1( X_{{\bf i}|k} = 1,  C_{({\bf i}|k,j)} R > e^{t-V_{{\bf i}|k}} ) \right) \right]  \notag \\
&\hspace{5mm} +  \sum_{k=0}^{n-1} E\left[ \sum_{({\bf i}|k) \in A_{k}} \left( 1(X_{{\bf i}|k} = -1,  R <- e^{t-V_{{\bf i}|k}} ) -  \sum_{j=1}^{N_{{\bf i}|k}} 1( X_{{\bf i}|k}= -1,  C_{({\bf i}|k,j)} R < - e^{t-V_{{\bf i}|k}} ) \right) \right]  \notag \\
&\hspace{5mm} + E\left[ \sum_{({\bf i}|n) \in A_n} 1(\Pi_{{\bf i}|n} R > e^t ) \right] \notag \\
&= \sum_{k=0}^{n-1} E\left[ \sum_{({\bf i}|k) \in A_{k}} 1(X_{{\bf i}|k} = 1) E \left[ \left.   1( R > e^{t-V_{{\bf i}|k}} ) -  \sum_{j=1}^{N_{{\bf i}|k}} 1( C_{({\bf i}|k,j)} R > e^{t-V_{{\bf i}|k}} ) \right| \mathcal{F}_k \right] \right]  \notag \\
&\hspace{5mm} +  \sum_{k=0}^{n-1} E\left[ \sum_{({\bf i}|k) \in A_{k}}  1(X_{{\bf i}|k} = -1) E \left[ \left. 1( R <- e^{t-V_{{\bf i}|k}} ) -  \sum_{j=1}^{N_{{\bf i}|k}} 1( C_{({\bf i}|k,j)} R < - e^{t-V_{{\bf i}|k}} )   \right| \mathcal{F}_k \right] \right]  \notag \\
&\hspace{5mm} + E\left[ \sum_{({\bf i}|n) \in A_n} 1(\Pi_{{\bf i}|n} R > e^t ) \right].
\end{align}

Now, define the measures $\mu_n^{(+)}$ and $\mu_n^{(-)}$ according to Lemma \ref{L.RenewalMeasure} and let
$$\nu_n^{(+)}(dt) = \sum_{k=0}^n \mu_k^{(+)}(dt), \qquad g_+(t) = e^{\alpha t} \left( P(R > e^t) - E\left[ \sum_{j=1}^{N} 1(C_{j} R > e^{t}) \right] \right),$$
$$\nu_n^{(-)}(dt) = \sum_{k=0}^n \mu_k^{(-)}(dt) , \qquad g_-(t) = e^{\alpha t} \left( P(R <- e^t) - E\left[ \sum_{j=1}^{N} 1(C_{j} R <- e^{t} ) \right]    \right),$$
$$r(t) = e^{\alpha t} P(R > e^t) \qquad \text{and} \qquad \delta_n(t) = e^{\alpha t} E\left[ \sum_{({\bf i}|n) \in A_n} 1(\Pi_{{\bf i}|n} R > e^t ) \right].$$

Since $R$ and $(N_{{\bf i} | k}, C_{({\bf i}, 1)}, \dots, C_{({\bf i}, N_{\bf i})})$ are independent of $\mathcal{F}_k$, then
\begin{align*}
&E \left[ \left.   1( R > e^{t-V_{{\bf i}|k}} ) -  \sum_{j=1}^{N_{{\bf i}|k}} 1( C_{({\bf i}|k,j)} R > e^{t-V_{{\bf i}|k}} ) \right| \mathcal{F}_k \right] = e^{\alpha (V_{{\bf i}|k} - t)} g_+(t - V_{{\bf i}|k}), \quad \text{and} \\
&E \left[ \left. 1( R <- e^{t-V_{{\bf i}|k}} ) -  \sum_{j=1}^{N_{{\bf i}|k}} 1( C_{({\bf i}|k,j)} R < - e^{t-V_{{\bf i}|k}} )   \right| \mathcal{F}_k \right] = e^{\alpha (V_{{\bf i}|k} - t)} g_-(t - V_{{\bf i}|k}).
\end{align*}
It follows that for any $t \in \mathbb{R}$ and $n \in \mathbb{N}$, 
$$r(t) = (g_+*\nu_{n-1}^{(+)})(t) + (g_-*\nu_{n-1}^{(-)})(t)  +  \delta_n(t).$$
Next, for any $\beta > 0$, define the operator
$$\breve{f}(t) = \int_{-\infty}^t e^{-\beta(t-u)} f(u) \, du$$
and note that
\begin{align}
\breve{r}(t) &= \int_{-\infty}^t e^{-\beta(t-u)} (g_+*\nu_{n-1}^{(+)})(u) \, du +  \int_{-\infty}^t e^{-\beta(t-u)} (g_-*\nu_{n-1}^{(-)})(u) \, du  +  \breve{\delta}_n (t) \notag \\
&= \int_{-\infty}^t e^{-\beta(t-u)} \int_{-\infty}^\infty g_+(u-v) \nu_{n-1}^{(+)}(dv) \, du + \int_{-\infty}^t e^{-\beta(t-u)} \int_{-\infty}^\infty g_-(u-v) \nu_{n-1}^{(-)}(dv) \, du + \breve{\delta}_n (t) \notag \\
&= \int_{-\infty}^\infty \int_{-\infty}^t e^{-\beta(t-u)} g_+(u-v) \, du \, \nu_{n-1}^{(+)}(dv) +  \int_{-\infty}^\infty \int_{-\infty}^t e^{-\beta(t-u)} g_-(u-v) \, du \, \nu_{n-1}^{(-)}(dv) + \breve{\delta}_n (t) \notag \\
&= \int_{-\infty}^\infty \breve{g}_+(t-v) \, \nu_{n-1}^{(+)}(dv) + \int_{-\infty}^\infty \breve{g}_-(t-v) \, \nu_{n-1}^{(-)}(dv) + \breve{\delta}_n (t) \notag \\
&= (\breve{g}_+* \nu_{n-1}^{(+)})(t) +  (\breve{g}_-* \nu_{n-1}^{(-)})(t) + \breve{\delta}_n(t) . \label{eq:SmoothOperator}
\end{align}

Now, we will show that one can pass $n \to \infty$ in the preceding identity. To this end, let $\eta_\pm(du) = \mu_1^{(\pm)}(du)$, and note that by Lemma \ref{L.RenewalMeasure} $(\eta_++\eta_-)(\cdot)$ is a probability measure on $\mathbb{R}$ that places no mass at $-\infty$ and has mean,
$$\mu \triangleq \int_{-\infty}^\infty u\, \eta_+(du) + \int_{-\infty}^\infty u\, \eta_-(du) = E\left[ \sum_{j=1}^N |C_j|^\alpha \log |C_j|  \right] > 0 .$$
To see that $(\eta_+ +  \eta_-)(\cdot)$ is nonlattice note that by assumption the measure $P(\log |C_j| \in du, |C_j| > 0, N\ge j)$ is nonlattice, since, if we suppose to the contrary that it is lattice on a lattice set $L$, then on the complement $L^c$ of this set we have (by conditioning on $N$)
$$0=E\left[ \sum_{i=1}^N 1(\log |C_i| \in L^c, |C_i|>0 )  \right]\ge P(\log |C_j| \in L^c, |C_j|>0,N \ge j)>0,$$ 
which is a contradiction.  

Moreover, in the notation of Lemma \ref{L.RenewalMeasure}, $\boldsymbol{\mu}_k = (\mu_k^{(+)}, \mu_k^{(-)})$, ${\bf e} = (1, 0)$ and ${\bf H} =  \left( \begin{matrix} 
\eta_+ & \eta_- \\ \eta_- & \eta_+ \end{matrix} \right)$, which gives
\begin{equation} \label{eq:RenewalMeasure}
\boldsymbol{\nu} = \left( \nu^{(+)} , \nu^{(-)}  \right) \triangleq  \sum_{k=0}^\infty \left(  \mu_k^{(+)} ,  \mu_k^{(-)}  \right)   = \sum_{k=0}^\infty \boldsymbol{\mu}_k = \sum_{k=0}^\infty {\bf e} {\bf H}^{*k} = {\bf e} \sum_{k=0}^\infty {\bf H}^{*k}. 
\end{equation}
Also, $\eta_+ + \eta_-$ being nonlattice implies that at least one of $\eta_+$ or $\eta_-$ is nonlattice, and therefore ${\bf H}$ is nonlattice.

Since $\mu \neq 0$, then $(|f|*\nu^{(\pm)})(t) < \infty$ for all $t$ whenever $f$ is directly Riemann integrable. By \eqref{eq:Goldie_condition1} and \eqref{eq:Goldie_condition2} we know that $g_{\pm} \in L^1$, so by Lemma 9.1 from \cite{Goldie_91}, $\breve{g}_{\pm}$ is directly Riemann integrable, resulting in $(|\breve{g}_{\pm}|*\nu^{(\pm)})(t) < \infty$ for all $t$.  Thus, 
$(|\breve{g}_\pm|*\nu^{(\pm)})(t) = E\left[ \sum_{k=0}^\infty  \sum_{({\bf i}|k) \in A_{k}} e^{\alpha V_{{\bf i}|k}} |\breve{g}_\pm(t-V_{{\bf i}|k})| \Indicator(X_{{\bf i}|k} = \pm 1) \right]  < \infty$, implying that $E\left[ \sum_{k=0}^\infty  \sum_{({\bf i}|k) \in A_{k}} e^{\alpha V_{{\bf i}|k}} \breve{g}_\pm(t-V_{{\bf i}|k}) \Indicator (X_{{\bf i}|k} = \pm 1) \right] $ exist, and by Fubini's theorem,
\begin{align*}
(\breve{g}_\pm*\nu^{(\pm)})(t) &= E\left[ \sum_{k=0}^\infty  \sum_{({\bf i}|k) \in A_{k}} e^{\alpha V_{{\bf i}|k}} \breve{g}_\pm(t-V_{{\bf i}|k}) \Indicator(X_{{\bf i}|k} = \pm 1) \right]  \\
&= \sum_{k=0}^\infty E\left[ \sum_{({\bf i}|k) \in A_{k}} e^{\alpha V_{{\bf i}|k}} \breve{g}_\pm(t-V_{{\bf i}|k}) \Indicator(X_{{\bf i}|k} = \pm 1) \right]  = \lim_{n\to \infty}  (\breve{g}_\pm*\nu_n^{(\pm)})(t).
\end{align*}

For case b), to see that $\breve{\delta}_n(t) \to 0$ as $n \to \infty$ for all fixed $t$, note that from the assumptions $E\left[ \sum_{j=1}^N |C_j|^\alpha \right] = 1$,  $E\left[ \sum_{j=1}^N |C_j|^\alpha \log |C_j| \right] > 0$, , and $E\left[ \sum_{j=1}^N |C_j|^\gamma \right] < \infty$ for some $0 \leq \gamma < \alpha$, there exists
$0 < \beta <\alpha$ such that $E\left[ \sum_{j=1}^N |C_j|^{\beta} \right] < 1$ (by convexity). Therefore, for such $\beta$, 
\begin{align}
\breve{\delta}_n(t) &= \int_{-\infty}^t e^{-\beta(t-u)} e^{\alpha u} E\left[ \sum_{({\bf i}|n) \in A_n} 1(\Pi_{{\bf i}|n} R > e^{u}) \right]  du \notag \\
&\leq e^{(\alpha-\beta)t} E\left[ \sum_{({\bf i}|n) \in A_n}  \int_{-\infty}^t e^{\beta u} 1(|\Pi_{{\bf i}|n} R| > e^{u} ) du \right] \notag   \\
&=  e^{(\alpha-\beta)t} E\left[ \sum_{({\bf i}|n) \in A_n} \int_0^{\min\{t, \log (|\Pi_{{\bf i}|n} R|)  \} } e^{\beta u}   du \right] \notag \\
&\leq \frac{e^{(\alpha-\beta)t}}{\beta} E\left[ \sum_{({\bf i}|n) \in A_n}  |\Pi_{{\bf i}|n} R|^\beta \right] . \label{eq:Geometric}
\end{align}
Similarly, one obtains bounds for case a) by replacing $|R|$ by either $R^+$ or $R^-$. 

It remains to show that the expectation in \eqref{eq:Geometric} converges to zero as $n \to \infty$. First note that from the independence of $R$ and $\mathcal{T}_C$, 
$$E\left[ \sum_{({\bf i}|n) \in A_n}  |\Pi_{{\bf i}|n} R|^\beta \right]  = E[|R|^\beta] E\left[ \sum_{({\bf i}|n) \in A_n}  |\Pi_{{\bf i}|n}|^\beta \right],$$
where $E[|R|^\beta] < \infty$, for $0 < \beta < \alpha$. For the expectation involving $\Pi_{{\bf i}|n}$ condition on $\mathcal{F}_{n-1}$ and use the independence of $(N_{{\bf i}|n-1}, C_{({\bf i}|n-1,1)}, \dots, C_{({\bf i}|n-1, N_{{\bf i}|n-1})})$ from $\mathcal{F}_{n-1}$ as follows
\begin{align}
E\left[   \sum_{({\bf i}|n) \in A_n}  |\Pi_{{\bf i}|n}|^{\beta} \right] &= E\left[   \sum_{({\bf i}|n-1) \in A_{n-1}}  E\left[ \left. \sum_{j=1}^{N_{{\bf i}|n-1}}  |\Pi_{{\bf i}|n-1}|^{\beta} |C_{({\bf i}|n-1, j)}|^\beta \right| \mathcal{F}_{n-1} \right]  \right] \notag \\
&= E\left[   \sum_{({\bf i}|n-1) \in A_{n-1}} |\Pi_{{\bf i}|n-1}|^\beta  E\left[ \left. \sum_{j=1}^{N_{{\bf i}|n-1}} |C_{({\bf i}|n-1, j)}|^\beta \right| \mathcal{F}_{n-1} \right]  \right]  \notag \\
&= E\left[ \sum_{j=1}^N |C_j|^\beta \right] E\left[   \sum_{({\bf i}|n-1) \in A_{n-1}} |\Pi_{{\bf i}|n-1}|^\beta    \right]  \notag \\
&= \left( E\left[     \sum_{j=1}^{N}  |C_{j}|^{\beta}   \right] \right)^n \qquad \text{(iterating $n-1$ times)}. \label{eq:PiMoments}
\end{align}
Since $E\left[     \sum_{j=1}^{N}  |C_{j}|^{\beta}   \right] < 1$, then the above converges to zero as $n \to \infty$. Hence, the preceding arguments allow us to pass $n \to \infty$ in \eqref{eq:SmoothOperator}, and obtain
\begin{equation}
\label{eq:RenewalEquation}
\breve{r}(t) = (\boldsymbol{\nu}*{\bf g})(t) = {\bf e} \left( {\bf U} * {\bf g} \right) (t),
\end{equation}
where ${\bf g} = (\breve{g}_+ , \breve{g}_-)^{\sc T}$ and ${\bf U} = \sum_{k=0}^\infty {\bf H}^{*k}$. To complete the analysis we need to consider two cases separately.

{\bf Case a):} $C_i \geq 0$ for all $i$. 

For this case we have $\eta_- \equiv 0$, from where it follows that
$$\boldsymbol{\nu} = {\bf e} {\bf U} = (1,0) \sum_{k=0}^\infty \left( \begin{matrix} \eta_+ & 0 \\ 0 & \eta_+ \end{matrix} \right)^{*k} = (1, 0) \left( \begin{matrix} \sum_{i=1}^\infty \eta_+^{*k} & 0 \\ 0 & \sum_{k=0}^\infty \eta_+^{*k} \end{matrix} \right) = \left( \sum_{k=0}^\infty \eta_+^{*k}, 0 \right),$$
which in turn implies that
$$\breve{r}(t) = (\nu^{(+)}* \breve{g}_+)(t) = \sum_{k=0}^\infty (\breve{g}_+*\eta_+^{*k})(t).$$
Then, by the matrix version of the Key Renewal Theorem on the real line, Theorem 4 in \cite{Sgi_06}, 
$$\lim_{t \to \infty} e^{-\beta t} \int_{0}^{e^t} v^{\alpha+\beta-1} P(R > v) dv = \lim_{t \to \infty} \breve{r}(t) = \frac{1}{\mu} \int_{-\infty}^\infty \breve{g}_+(u) du \triangleq \frac{H_+}{\beta}.$$
Clearly, $H_+ \geq 0$ since the left-hand side of the preceding equation is positive, and thus, by 
Lemma \ref{L.Derivative},
$$P(R > t) \sim H_+ t^{-\alpha}, \qquad t \to \infty.$$
To derive the result for $P(R < -t)$, simply start by developing a telescoping sum for $P(R < -e^t)$ in \eqref{eq:telescoping}, define $r(t) = e^{\alpha t} P(R < - e^t)$ and follow exactly the same steps to obtain
$$\lim_{t \to \infty} e^{-\beta t} \int_{0}^{e^t} v^{\alpha+\beta-1} P(R <- v) dv = \frac{1}{\mu} \int_{-\infty}^\infty \breve{g}_-(u) du \triangleq \frac{H_-}{\beta}$$
and
$$P(R < -t) \sim H_- t^{-\alpha}, \qquad t \to \infty.$$
To compute the constants $H_+, H_-$ note that
\begin{align*}
H_\pm &=  \frac{\beta}{\mu} \int_{-\infty}^\infty \int_{-\infty}^u e^{-\beta(u-t)} g_\pm(t) \, dt \, du \\
&=  \frac{1}{\mu} \int_{-\infty}^\infty e^{\beta t} g_\pm(t)  \int_{t}^\infty \beta e^{-\beta u} \, du \, dt \\
&= \frac{1}{ \mu} \int_{-\infty}^\infty  g_\pm(t) \, dt \\
&= \frac{1}{ \mu} \int_{-\infty}^\infty e^{\alpha t} \left( P( (\pm 1) R > e^t) - E\left[ \sum_{j=1}^{N} 1( (\pm 1)C_{j} R > e^{t} ) \right]    \right) dt \\
&= \frac{1}{ \mu} \int_{0}^\infty v^{\alpha-1} \left( P((\pm 1) R > v) - E\left[ \sum_{j=1}^{N} 1((\pm 1) C_{j} R > v ) \right]    \right) dv. 
\end{align*}

{\bf Case b):} $P(C_j < 0) > 0$ for some $j \geq 1$. 

For this case we have that $\eta_-$ is nonzero. Also, note that the matrix
$${\bf H}((-\infty,\infty)) = \left( \begin{matrix} E\left[ \sum_{j=1}^N |C_j|^\alpha \Indicator(X_j = 1) \right] & E\left[ \sum_{j=1}^N |C_j|^\alpha \Indicator(X_j = -1) \right] \\
E\left[ \sum_{j=1}^N |C_j|^\alpha \Indicator(X_j = -1) \right] & E\left[ \sum_{j=1}^N |C_j|^\alpha \Indicator(X_j = 1) \right]   \end{matrix} \right) \triangleq \left( \begin{matrix} p & 	q \\ q & p \end{matrix} \right)$$
is irreducible and has eigenvalues $\{1, q - p\}$, and therefore spectral radius equal to one. Moreover, $(1, 1)$ and $(1, 1)^T$ are left and right eigenvalues, respectively, of ${\bf H}((-\infty, \infty))$ corresponding to eigenvalue one, and by assumption,
$$(1, 1) \int_{-\infty}^\infty x {\bf H}(dx) \left( \begin{matrix} 1 \\ 1 \end{matrix} \right) = 2 \left(  \int_{-\infty}^\infty x \eta_+(dx) + \int_{-\infty}^\infty x \eta_-(dx) \right) = 2 E\left[ \sum_{j=1}^N |C_j|^\alpha \log|C_j| \right] = 2\mu > 0.$$
Furthermore, since the matrix of measures {\bf H} is nonlattice, Theorem 4 in \cite{Sgi_06} gives
$$\lim_{t \to \infty} {\bf U} * {\bf g}(t) = \frac{(1, 1)^T (1, 1)}{2\mu} \int_{-\infty}^\infty {\bf g}(u) du = \frac{1}{2\mu} \left(  \begin{matrix} \int_{-\infty}^\infty (\breve{g}_+(u) + \breve{g}_-(u)) du \\ \int_{-\infty}^\infty (\breve{g}_+(u) + \breve{g}_-(u)) du  \end{matrix} \right),$$
from where it follows that
$$\lim_{t \to \infty} e^{-\beta t} \int_{0}^{e^t} v^{\alpha+\beta-1} P(R > v) dv = \lim_{t \to \infty} \breve{r}(t) =  \lim_{t \to \infty} {\bf e}  ({\bf U}* {\bf g})(t) = \frac{1}{2\mu} \int_{-\infty}^\infty (\breve{g}_+(u) + \breve{g}_-(u)) du \triangleq \frac{H}{\beta}.$$
Note that $H = (H_+ + H_-)/2$, and by Lemma \ref{L.Derivative},
$$P(R > t) \sim H t^{-\alpha}, \qquad t \to \infty.$$
To derive the result for $P(R < -t)$ simply start by defining $r(t) = e^{\alpha t}P(R < -e^t)$, which in this case leads to the same asymptotics as above, that is,
$$P(R < - t) \sim H t^{-\alpha}, \qquad t \to \infty.$$
Finally, we note, by using the representations for $H_+$ and $H_-$ from Case a), that
\begin{align*}
H &= \frac{1}{2\mu} \int_0^\infty v^{\alpha-1} \left( P( R > v) - E\left[ \sum_{j=1}^N 1(C_j R > v) \right] \right) dv \\
&\hspace{5mm} + \frac{1}{2\mu} \int_0^\infty v^{\alpha-1} \left( P( R < -v) - E\left[ \sum_{j=1}^N 1(C_j R < - v) \right] \right) dv \\
&= \frac{1}{2\mu} \int_0^\infty v^{\alpha-1} \left( P( |R| > v) - E\left[ \sum_{j=1}^N 1(|C_j R| >  v) \right] \right) dv.
\end{align*}
\end{proof}

\section{The linear recursion: $R = \sum_{i=1}^N C_i R_i + Q$} \label{S.LinearRec}

Motivated by the information ranking problem on the internet, e.g. Google's PageRank algorithm 
\cite{Jel_Olv_10, Jel_Olv_11, Volk_Litv_08}, in this section we apply the implicit renewal theory for trees developed in the previous section to the following linear recursion:
\begin{equation} \label{eq:Linear}
R \stackrel{\mathcal{D}}{=} \sum_{i=1}^N C_i R_i + Q,
\end{equation}
where $N  \in \mathbb{N} \cup \{\infty\}$,
$\{C_i\}_{i=1}^N$ are real valued random weights, $Q$ is a real valued random variable with  $P(Q \not = 0) > 0$
and $\{R_i\}_{i\geq 1}$ are iid copies of $R$, independent of $(N, C_1,\dots, C_N)$.
Note that the power tail of $R$ for the case $Q \geq 0$, $\{C_i \geq 0 \}$ was previously studied in \cite{Jel_Olv_11}, the critical homogeneous case $(Q \equiv 0)$ with $\{C_i\ge 0\}$ was considered in \cite{Liu_00} and \cite{Iksanov_04}.

The first result we need to establish is the existence and finiteness of a solution to \eqref{eq:Linear}. For the purpose of existence we will provide an explicit construction of a solution $R$ to \eqref{eq:Linear} on a tree.  Note that such constructed $R$ will be the main object of study of this section. 

Recall that throughout the paper the convention is to denote the random vector associated to the root node  $\emptyset$ by $(Q, N,C_1, \dots, C_N) \equiv (Q_\emptyset, N_\emptyset, C_{(\emptyset, 1)}, \dots, C_{(\emptyset, N_\emptyset)})$. 

We now define the process
\begin{equation} \label{eq:W_k}
W_0 =  Q, \quad W_n =  \sum_{{\bf i} \in A_n} Q_{{\bf i}} \Pi_{{\bf i}}, \qquad n \geq 1,
\end{equation}
on the weighted branching tree ${\mathcal T}_{Q, C}$, as constructed in Section \ref{S.ModelDescription}.

Define the process $\{R^{(n)}\}_{n \geq 0}$ according to
\begin{equation} \label{eq:R_nDef}
R^{(n)} = \sum_{k=0}^n W_k , \qquad n \geq 0,
\end{equation}
that is, $R^{(n)}$ is the sum of the weights of all the nodes on the tree up to the $n$th generation. It is not hard to see that $R^{(n)}$ satisfies the recursion
\begin{equation} \label{eq:LinearRecSamplePath} 
R^{(n)} = \sum_{j=1}^{N_\emptyset} C_{(\emptyset,j)} R^{(n-1)}_{j} + Q_{\emptyset} = \sum_{j=1}^{N} C_{j} R^{(n-1)}_{j} + Q, \qquad n \geq 1,
\end{equation}
where $\{R_{j}^{(n-1)}\}$ are independent copies of $R^{(n-1)}$ corresponding to the tree starting with individual $j$ in the first generation and ending on the $n$th generation; note that $R_j^{(0)} = Q_j$. Moreover, since the tree structure repeats itself after the first generation, $W_n$ satisfies
\begin{align}
W_n &= \sum_{{\bf i} \in A_n} Q_{{\bf i}} \Pi_{{\bf i}} \notag\\
&= \sum_{k = 1}^{N_\emptyset} C_{(\emptyset,k)}  
\sum_{(k,\dots,i_n) \in A_n} Q_{(k,\dots,i_n)} \prod_{j=2}^n C_{(k,\dots,i_j)}  \notag\\
&\stackrel{\mathcal{D}}{=} \sum_{k=1}^N C_k W_{(n-1),k},\label{eq:WnRec}
\end{align}
where $\{W_{(n-1),k}\}$ is a sequence of iid random variables independent of $(N, C_1, \dots, C_N)$ and having the same distribution as $W_{n-1}$. 

\begin{lem} \label{L.RnConvergence} 
If for some $0<\beta\le 1$, $E\left[|Q|^\beta\right]<\infty$, $E \left[ \sum_{j=1}^N |C_j|^\beta\right]<1$, then 
$R^{(n)}\rightarrow R$ a.s. as $n\rightarrow \infty$, where $E[|R|^\beta]<\infty$ and is given by
\begin{equation} \label{eq:ExplicitConstr}
R  \eqdef \sum_{n=0}^\infty W_n.
\end{equation}
\end{lem}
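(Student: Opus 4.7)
The plan is to prove a.s. convergence of $R^{(n)}$ by first showing absolute convergence of the series $\sum_{n=0}^\infty W_n$, exploiting the subadditivity $|x+y|^\beta\le|x|^\beta+|y|^\beta$ that is available only because $0<\beta\le 1$. I would start by applying this subadditivity termwise in the definition \eqref{eq:W_k} to obtain
\begin{equation*}
|W_n|^\beta \le \sum_{{\bf i}\in A_n} |Q_{\bf i}|^\beta\,|\Pi_{\bf i}|^\beta.
\end{equation*}
Taking expectations, using that $Q_{\bf i}$ is independent of the subtree structure determining $\Pi_{\bf i}$, and then mimicking the conditioning argument in \eqref{eq:PiMoments} (conditioning on $\mathcal{F}_{n-1}$ and iterating), I would derive
\begin{equation*}
E\bigl[|W_n|^\beta\bigr] \le E[|Q|^\beta]\cdot E\!\left[\sum_{{\bf i}\in A_n}|\Pi_{\bf i}|^\beta\right] = E[|Q|^\beta]\,\rho^{\,n},\qquad \rho \triangleq E\!\left[\sum_{j=1}^N|C_j|^\beta\right]<1.
\end{equation*}

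Next, I would sum in $n$ and apply Tonelli's theorem to conclude that
\begin{equation*}
E\!\left[\sum_{n=0}^\infty |W_n|^\beta\right] \le \frac{E[|Q|^\beta]}{1-\rho} < \infty,
\end{equation*}
so that $\sum_{n=0}^\infty |W_n|^\beta<\infty$ almost surely. From this summability I would deduce, as a short bridge step, that $|W_n|\to 0$ a.s., hence $|W_n|\le 1$ for all sufficiently large $n$; for such $n$ one has $|W_n|\le|W_n|^\beta$ (since $\beta\le 1$), whence $\sum_{n=0}^\infty|W_n|<\infty$ a.s. Therefore the series defining $R$ in \eqref{eq:ExplicitConstr} converges absolutely a.s., and $R^{(n)}=\sum_{k=0}^n W_k\to R$ a.s. as $n\to\infty$.

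Finally, for the moment bound, I would again invoke subadditivity, this time in the limit form $|R|^\beta\le\sum_{n=0}^\infty|W_n|^\beta$ (justified by monotone convergence applied to the partial sums), and take expectations to obtain $E[|R|^\beta]\le E[|Q|^\beta]/(1-\rho)<\infty$. The only nonroutine point in this plan is recognizing that the regime $\beta\le 1$ is exactly what powers the subadditivity step and, in parallel, the inequality $|W_n|\le|W_n|^\beta$ for $|W_n|\le 1$; without these two facts the passage from the (easy) geometric bound on $E[|W_n|^\beta]$ to almost sure convergence of the unsigned series would require a separate, more delicate argument (e.g., via martingale convergence or Borel--Cantelli). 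Everything else reduces to the branching-tree computation already recorded in \eqref{eq:PiMoments}, which I would cite rather than redo.
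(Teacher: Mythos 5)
Your argument is correct, and its engine is the same as the paper's: the geometric moment bound $E[|W_n|^\beta]\le E[|Q|^\beta]\rho_\beta^n$ obtained from the subadditivity $(\sum_i y_i)^\beta\le\sum_i y_i^\beta$ for $0<\beta\le1$ together with the conditioning computation in \eqref{eq:PiMoments} (this is exactly Lemma~\ref{L.MomentSmaller_1}, which the paper simply cites), and the final bound $E[|R|^\beta]\le E[|Q|^\beta]/(1-\rho_\beta)$ is identical. Where you diverge is in converting the moment bound into almost sure convergence: the paper bounds $P(\sup_{m>n}|R^{(m)}-R^{(n)}|>\epsilon)$ by Markov's inequality applied to the tail sum $\sum_{i>n}|W_i|^\beta$ and invokes the Cauchy-in-probability criterion (Corollary~4, p.~68 of \cite{ChowTeich1988}), whereas you deduce $E[\sum_n|W_n|^\beta]<\infty$ by Tonelli, hence $\sum_n|W_n|^\beta<\infty$ a.s., and then pass to $\sum_n|W_n|<\infty$ a.s.\ via the observation that summability of the $\beta$-powers forces $|W_n|\le1$ eventually, so the series converges absolutely and $R^{(n)}\to R$ a.s. Your route is self-contained (no external convergence criterion needed) and has the pleasant by-product of directly establishing the absolute convergence $\sum_n\sum_{{\bf i}\in A_n}|Q_{\bf i}\Pi_{\bf i}|<\infty$-type statement the paper records only afterwards; the paper's route is marginally shorter given the citation. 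One small streamlining of your bridge step: you do not need the ``eventually $|W_n|\le1$'' detour at all, since $\bigl(\sum_n|W_n|\bigr)^\beta\le\sum_n|W_n|^\beta$ already shows the total sum has finite $\beta$-moment and is therefore a.s.\ finite, which also silently covers the point (left implicit in your write-up) that each $W_n$ is itself an a.s.\ absolutely convergent sum when $N=\infty$ is allowed.
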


\begin{rem}
If $E[N] < 1$ the tree is finite a.s. and thus $R$ is finite a.s. for any choice of $Q$ and $\{C_i\}$. 
\end{rem}

\begin{proof}[Proof of Lemma \ref{L.RnConvergence}]
By Corollary~4 on p.~68 in \cite{ChowTeich1988} the a.s. convergence of $R^{(n)}$ will follow once we show that, in probability,
$$
\sup_{m>n}|R^{(m)}-R^{(n)}|\to 0, \quad \text{as  } n\rightarrow \infty.
$$ 
To this end, note that that for any $\epsilon>0$
\begin{align}
P\left( \sup_{m>n}|R^{(m)}-R^{(n)}| >\epsilon \right)
&\le P\left( \sup_{m>n}\sum_{i=n+1}^m |W_i| >\epsilon \right) \notag\\
&= P\left( \sum_{i=n+1}^\infty |W_i| >\epsilon \right)\notag\\
&\le \frac{1}{\epsilon^\beta}E\left[ \left( \sum_{i=n+1}^\infty |W_i| \right)^\beta \right] \notag\\
&\le \frac{1}{\epsilon^\beta}E\left[ \sum_{i=n+1}^\infty |W_i|^\beta\right],\label{eq:RnConvergence}
\end{align}
where the last inequality follows from the elementrary inequality 
$(\sum_i y_i)^\beta\le \sum_i y_i^\beta$ for $y_i\ge 0$ and $0<\beta\le 1$;
this elementary inequality is used repeatedly in the remainder of this proof and paper.
Now, the last sum can be easily evaluated since by Lemma \ref{L.MomentSmaller_1} below we have 
$$
E\left[ |W_i|^\beta\right]\le E\left[|Q|^\beta\right]\rho_\beta^i,
$$
where $\rho_\beta=E \left[ \sum_{j=1}^N |C_j|^\beta\right]$.
Therefore, by combining the preceding two inequalities we obtain
$$
P\left( \sup_{m>n}|R^{(m)}-R^{(n)}| >\epsilon \right)\le \frac{1}{\epsilon^\beta} \cdot \frac{E\left[|Q|^\beta\right]\rho_\beta^{n+1}}{1-\rho_\beta}\rightarrow 0
$$
as $n\rightarrow \infty$, which completes the proof of the a.s. convergence part. 
Thus, the infinite sum in \eqref{eq:ExplicitConstr}
 is properly defined and
 $$
 E[|R|^\beta]\le E\left[\sum_{i=0}^\infty |W_i|^\beta\right]=\frac{E\left[|Q|^\beta\right]}{1-\rho_\beta}<\infty.
 $$
\end{proof}

Furthermore, under the assumption of the preceding lemma, it is easy to see that the sum of all the absolute values of the 
weights on the tree are a.s. finite, i.e.,
$$
\sum_{n=0}^\infty \sum_{{\bf i}\in A_n} |Q_{\bf i}\Pi_{\bf i}|<\infty \quad \text{a.s.}
$$
Hence, it can be easily seen from the construction of $R$ on the tree, that it can be decomposed into the following identity
$$R = \sum_{j=1}^{N_\emptyset} C_{(\emptyset,j)} R_{j}^{(\infty)} + Q_\emptyset = \sum_{j=1}^N C_{j} R_j^{(\infty)} + Q ,$$
where $\{R_{j}\}$ are independent copies of $R$ corresponding to the infinite subtree starting with individual $j$ in the first generation.
The derivation provided above implies in particular the existence of a solution in distribution to \eqref{eq:Linear}. 
Moreover, we will show in the following section that, under additional technical conditions, $R$ is the unique solution.
The constructed $R$, as defined in \eqref{eq:ExplicitConstr}, is the main object of study in the remainder of this section. Note that, in view of the very recent work in \cite{Alsm_Mein_10b}, \eqref{eq:Linear} may have other stable law solutions that are not considered here.

\subsection{Moments of $W_n$ and $R$} \label{SS.MomentsLinear}

In order to establish the finiteness of moments of $W_n$ and $R$ let $A_{\mathcal{T}} = \bigcup_{n = 0}^\infty A_n$ and note that
\begin{align*}
&|W_n| \leq \sum_{{\bf i} \in A_n} |Q_{\bf i}| |\Pi_{\bf i} |, \qquad n \geq 1, \\
\text{and } &\quad |R| \leq \sum_{n=0}^\infty |W_n| \leq \sum_{{\bf i} \in A_{\mathcal{T}} } |Q_{\bf i}| |\Pi_{\bf i} |, 
\end{align*}
so Lemmas 4.2, 4.3 and 4.4 in \cite{Jel_Olv_11} apply and we immediately obtain the following results. Throughout the rest of the paper we use $\rho_\beta = E\left[ \sum_{i=1}^N |C_i|^\beta \right]$ and $\rho \equiv \rho_1$. 

\begin{lem} \label{L.MomentSmaller_1}
Let $0 < \beta \leq 1$. Then, for all $n \geq 0$,
$$E[ |W_n|^\beta ] \leq  E[ |Q|^\beta] \rho_\beta^{n}.$$
\end{lem}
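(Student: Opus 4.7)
The plan is to bound $|W_n|$ pointwise by the sum of the absolute values of the contributing terms, apply the elementary subadditivity inequality $(\sum y_i)^\beta \le \sum y_i^\beta$ (valid for $y_i \ge 0$ and $0 < \beta \le 1$, and already invoked in the proof of Lemma~\ref{L.RnConvergence}), and then separate the $Q_{\bf i}$'s from the $\Pi_{\bf i}$'s via conditioning. Specifically, the triangle inequality gives
$$|W_n|^\beta = \left|\sum_{{\bf i}\in A_n} Q_{\bf i}\Pi_{\bf i}\right|^\beta \le \left(\sum_{{\bf i}\in A_n} |Q_{\bf i}||\Pi_{\bf i}|\right)^\beta \le \sum_{{\bf i}\in A_n} |Q_{\bf i}|^\beta |\Pi_{\bf i}|^\beta.$$

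Next, I would condition on the $\sigma$-algebra $\mathcal{F}_n$ generated by $\{(N_{\bf j}, C_{({\bf j},1)},\ldots, C_{({\bf j},N_{\bf j})}) : {\bf j}\in A_k,\, 0\le k\le n-1\}$ (the same filtration used in the proof of Theorem~\ref{T.NewGoldie}). Under this filtration the set $A_n$ and each weight $\Pi_{\bf i}$ for ${\bf i}\in A_n$ is $\mathcal{F}_n$-measurable, while the attached marks $\{Q_{\bf i}\}_{{\bf i}\in A_n}$ are iid copies of $Q$ that are independent of $\mathcal{F}_n$. Hence
$$E[|W_n|^\beta] \le E\!\left[\sum_{{\bf i}\in A_n} |\Pi_{\bf i}|^\beta\, E\bigl[|Q_{\bf i}|^\beta \bigm| \mathcal{F}_n\bigr]\right] = E[|Q|^\beta]\cdot E\!\left[\sum_{{\bf i}\in A_n} |\Pi_{\bf i}|^\beta\right].$$

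It remains to identify the second factor with $\rho_\beta^n$. This is exactly the computation carried out in \eqref{eq:PiMoments} in the proof of Theorem~\ref{T.NewGoldie}: conditioning successively on $\mathcal{F}_{n-1}, \mathcal{F}_{n-2},\ldots$, and using the multiplicative structure $\Pi_{({\bf i}|n-1, j)} = \Pi_{{\bf i}|n-1}\, C_{({\bf i}|n-1, j)}$ together with the independence of $(N_{{\bf i}|n-1}, C_{({\bf i}|n-1, 1)},\ldots)$ from $\mathcal{F}_{n-1}$, one obtains $E[\sum_{{\bf i}\in A_n}|\Pi_{\bf i}|^\beta] = \rho_\beta^n$. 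Combining these two estimates yields the claimed bound $E[|W_n|^\beta] \le E[|Q|^\beta]\rho_\beta^n$.

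There is no real obstacle here; the argument is essentially a subadditivity bound followed by an independence/iteration step. The only subtlety worth highlighting is the independence between the marks $Q_{\bf i}$ attached to generation-$n$ nodes and the branch weights $\Pi_{\bf i}$, which follows from the construction of $\mathcal{T}_{Q,C}$ and justifies pulling $E[|Q|^\beta]$ out of the sum.
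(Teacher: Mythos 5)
Your proof is correct and follows essentially the same route as the paper, which simply bounds $|W_n| \le \sum_{{\bf i}\in A_n}|Q_{\bf i}||\Pi_{\bf i}|$ and invokes the nonnegative-weights version of this lemma (Lemma~4.2 in \cite{Jel_Olv_11}), whose proof is exactly the subadditivity-plus-conditioning computation you spell out, with the identity $E[\sum_{{\bf i}\in A_n}|\Pi_{\bf i}|^\beta]=\rho_\beta^n$ as in \eqref{eq:PiMoments}. Nothing is missing; the independence of the generation-$n$ marks $Q_{\bf i}$ from $\mathcal{F}_n$ that you highlight is indeed the only point requiring care.
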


\begin{lem} \label{L.GeneralMoment}
Let $\beta > 1$ and suppose $E\left[ \left( \sum_{i=1}^N |C_i| \right)^\beta \right] < \infty$,  $E[|Q|^\beta]< \infty$, and $\rho \vee \rho_\beta < 1$. Then, there exists a constant $K_\beta > 0$ such that for all $n \geq 0$, 
\begin{equation*} 
E[ |W_n|^\beta ] \leq K_\beta ( \rho \vee \rho_\beta  )^{n}.
\end{equation*}
\end{lem}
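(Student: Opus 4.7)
The plan is to reduce this statement to its nonnegative-weight counterpart, Lemma~4.3 of \cite{Jel_Olv_11}, by dominating $|W_n|$ pointwise by the analogous process constructed on an ``absolute-value'' weighted branching tree. This is exactly the reduction gestured at by the sentence ``so Lemmas 4.2, 4.3 and 4.4 in \cite{Jel_Olv_11} apply'' immediately preceding the statement.

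First, I would observe that by the triangle inequality applied to the definition \eqref{eq:W_k},
$$|W_n| \;=\; \left|\sum_{{\bf i} \in A_n} Q_{\bf i}\,\Pi_{\bf i}\right| \;\leq\; \sum_{{\bf i} \in A_n} |Q_{\bf i}|\,|\Pi_{\bf i}|,$$
and the right-hand side is precisely the process $\widetilde{W}_n$ obtained by constructing the weighted branching tree of Section~\ref{S.ModelDescription} with the input vector $(|Q|, N, |C_1|, \ldots, |C_N|)$ in place of $(Q, N, C_1, \ldots, C_N)$. In particular, $\widetilde{W}_n$ satisfies the branching recursion \eqref{eq:WnRec} with nonnegative weights $|C_i|$ and nonnegative ``forcing'' $|Q|$.

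Second, I would verify that the hypotheses of the lemma carry over to this auxiliary tree verbatim. The quantities $\rho = E[\sum_i |C_i|]$, $\rho_\beta = E[\sum_i |C_i|^\beta]$, $E[(\sum_i |C_i|)^\beta]$, and $E[|Q|^\beta]$ depend only on the absolute values of $Q$ and the $C_i$'s, so replacing the weights by their moduli leaves every assumption unchanged. Consequently, Lemma~4.3 of \cite{Jel_Olv_11} applies to $\widetilde{W}_n$ and furnishes a constant $K_\beta > 0$, depending only on $\beta$, $\rho$, $\rho_\beta$, $E[|Q|^\beta]$, and $E[(\sum_i |C_i|)^\beta]$, such that $E[\widetilde{W}_n^\beta] \leq K_\beta (\rho \vee \rho_\beta)^n$ for every $n \geq 0$. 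Combining this with the pointwise bound $|W_n|^\beta \leq \widetilde{W}_n^\beta$ and taking expectations completes the proof.

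I do not foresee a substantive obstacle here. The mathematical depth of the lemma sits inside Lemma~4.3 of \cite{Jel_Olv_11}, whose proof proceeds by induction on $n$, using the distributional recursion \eqref{eq:WnRec} and a sharp $c_r$-type decomposition of the inner sum into a ``mean'' part (handled by $\rho$ through the ordinary triangle / Minkowski step) and a ``fluctuation'' part (handled by $\rho_\beta$); the max $\rho \vee \rho_\beta$ in the exponent is exactly the rate produced by iterating that split. Since that argument is itself carried out with nonnegative quantities, once we invoke it on the absolute-value tree the passage back to the signed-weight tree costs nothing.
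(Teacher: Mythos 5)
Your proposal is correct and is exactly the argument the paper uses: it bounds $|W_n| \leq \sum_{{\bf i} \in A_n} |Q_{\bf i}||\Pi_{\bf i}|$, identifies the right-hand side with the $W_n$-process on the absolute-value tree, and invokes Lemma~4.3 of \cite{Jel_Olv_11}, whose hypotheses involve only $|Q|$ and $|C_i|$ and are therefore unchanged. Nothing further is needed.
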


\begin{lem} \label{L.Moments_R}
Assume $E[|Q|^\beta] < \infty$ for some $\beta > 0$. In addition, suppose either (i) $\rho_\beta < 1$ if $0 < \beta < 1$, or (ii) $(\rho \vee \rho_\beta)  < 1$ and $E\left[\left( \sum_{i=1}^N |C_i| \right)^\beta\right] < \infty$ if $\beta \geq 1$. Then, $E[|R|^\gamma] < \infty$ for all $0  < \gamma \leq \beta$. Moreover, if $\beta \geq 1$, $R^{(n)} \stackrel{L_\beta}{\to} R$, where $L_\beta$ stands for convergence in $(E|\cdot|^\beta)^{1/\beta}$ norm. 
\end{lem}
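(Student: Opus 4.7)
The plan is to bound $|R|$ by the sum $\sum_{n=0}^\infty |W_n|$ guaranteed by Lemma~\ref{L.RnConvergence} (valid since the hypotheses of the present lemma imply those of that lemma, either directly for $0<\beta\leq 1$ or via $\rho_1=\rho<1$ for $\beta\geq 1$), and then control each $\|W_n\|_\beta$ geometrically using the moment estimates in Lemmas~\ref{L.MomentSmaller_1} and~\ref{L.GeneralMoment}. The final assertion $E[|R|^\gamma]<\infty$ for $0<\gamma\leq \beta$ would then come from Lyapunov's inequality $E[|R|^\gamma]\leq (E[|R|^\beta])^{\gamma/\beta}$ together with the bound on $E[|R|^\beta]$, so it suffices to prove the case $\gamma=\beta$.

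For case (i), $0<\beta\leq 1$, I would repeat the elementary sub-additivity trick $(\sum_i y_i)^\beta\leq \sum_i y_i^\beta$ already used in the proof of Lemma~\ref{L.RnConvergence}: starting from $|R|\leq \sum_{n=0}^\infty |W_n|$, take $\beta$-th powers, take expectations, and invoke Lemma~\ref{L.MomentSmaller_1} to obtain
\[
E[|R|^\beta]\;\leq\;\sum_{n=0}^\infty E[|W_n|^\beta]\;\leq\; E[|Q|^\beta]\sum_{n=0}^\infty \rho_\beta^n \;=\;\frac{E[|Q|^\beta]}{1-\rho_\beta}<\infty.
\]
This is essentially already established in the proof of Lemma~\ref{L.RnConvergence}, so case (i) is immediate.

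For case (ii), $\beta\geq 1$, sub-additivity is no longer available; instead I would use Minkowski's inequality in $L_\beta$. Writing $R=\lim_n R^{(n)} = \sum_{n=0}^\infty W_n$ (a.s.\ limit from Lemma~\ref{L.RnConvergence}, whose hypotheses are met since $\rho<1$ implies $E[\sum_i |C_i|]<1$ after, if needed, replacing by some $0<\beta'\leq 1$ for which the hypothesis of Lemma~\ref{L.RnConvergence} holds), and invoking Lemma~\ref{L.GeneralMoment} to get $E[|W_n|^\beta]\leq K_\beta (\rho\vee \rho_\beta)^n$,
\[
\bigl(E[|R|^\beta]\bigr)^{1/\beta}\;\leq\;\sum_{n=0}^\infty \bigl(E[|W_n|^\beta]\bigr)^{1/\beta}\;\leq\; K_\beta^{1/\beta}\sum_{n=0}^\infty (\rho\vee \rho_\beta)^{n/\beta}\;<\;\infty,
\]
where finiteness uses $\rho\vee\rho_\beta<1$. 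The same estimate handles the $L_\beta$ convergence: for any $m>n$,
\[
\bigl(E[|R^{(m)}-R^{(n)}|^\beta]\bigr)^{1/\beta}\leq \sum_{k=n+1}^{m} \bigl(E[|W_k|^\beta]\bigr)^{1/\beta}\leq K_\beta^{1/\beta}\frac{(\rho\vee \rho_\beta)^{(n+1)/\beta}}{1-(\rho\vee \rho_\beta)^{1/\beta}},
\]
so $\{R^{(n)}\}$ is Cauchy in $L_\beta$, and since it converges a.s.\ to $R$, it converges in $L_\beta$ to $R$ as well.

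The argument is essentially routine once Lemmas~\ref{L.MomentSmaller_1} and~\ref{L.GeneralMoment} are in hand; the only mildly delicate point is interchanging the a.s.\ limit with the $L_\beta$ limit for case (ii), which is handled by the Cauchy-in-$L_\beta$ argument above combined with uniqueness of limits. There is no serious obstacle here — the real work has already been done in establishing the geometric decay of $E[|W_n|^\beta]$.
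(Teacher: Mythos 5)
Your proof is correct and follows essentially the same route the paper relies on: the paper simply notes $|R|\le\sum_{n}|W_n|\le\sum_{\mathbf{i}\in A_{\mathcal{T}}}|Q_{\mathbf{i}}||\Pi_{\mathbf{i}}|$ and cites Lemmas 4.2--4.4 of \cite{Jel_Olv_11}, whose underlying argument is exactly yours — geometric decay of $E[|W_n|^\beta]$ from Lemmas~\ref{L.MomentSmaller_1} and~\ref{L.GeneralMoment}, combined with subadditivity for $0<\beta\le 1$ and Minkowski plus a Cauchy-in-$L_\beta$ argument for $\beta\ge 1$. The only cosmetic point is that for $\beta=1$ you should quote Lemma~\ref{L.MomentSmaller_1} (with $\rho_1=\rho$) rather than Lemma~\ref{L.GeneralMoment}, which is stated for $\beta>1$; everything else goes through as you wrote it.
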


\subsection{Asymptotic behavior} \label{SS.MainLinear}

We now characterize the tail behavior of the distribution of the solution $R$ to the nonhomogeneous equation \eqref{eq:Linear}, 
as defined by \eqref{eq:ExplicitConstr}.

\begin{thm} \label{T.LinearRecursion}
Let $(Q, N, C_1, \dots, C_N)$ be a random vector, with $N \in \mathbb{N} \cup \{\infty\}$, $\{C_i\}_{i=1}^N$ real valued weights, $Q$ a real valued random variable with $P(|Q| > 0) > 0$
and $R$ be the solution to \eqref{eq:Linear} given by  \eqref{eq:ExplicitConstr}. 
Suppose that there exists $j \geq 1$ with $P(N\ge j,|C_j|>0)>0$ such that the measure $P(\log |C_j|\in du, |C_j| > 0, N\ge j)$ is nonlattice, and 
that for some $\alpha > 0$,  $E[|Q|^\alpha] < \infty$, $E \left[ \sum_{i=1}^N |C_i|^\alpha \log |C_i| \right] > 0$ and  $ E \left[ \sum_{i=1}^N |C_i|^\alpha \right] = 1$. In addition, assume
\begin{enumerate} \renewcommand{\labelenumi}{\arabic{enumi})}
\item $E\left[ \sum_{i=1}^N |C_i| \right] < 1$ and $E\left[ \left( \sum_{i=1}^N |C_i| \right)^\alpha \right] < \infty$, if $\alpha > 1$; or,
\item $E\left[ \left( \sum_{i=1}^N |C_i|^{\alpha/(1+\epsilon)}\right)^{1+\epsilon} \right] < \infty$ for some $0 < \epsilon< 1$, if $0 < \alpha \leq 1$. 
\end{enumerate}
Then,
\begin{enumerate}
\item if $\{C_i \} \geq 0$ a.s.
$$P(R > t) \sim H_+ t^{-\alpha}, \qquad P(R < -t) \sim H_- t^{-\alpha}, \qquad t \to \infty,$$
where $H_\pm \geq 0$ are given by
\begin{align*}
H_\pm &= \frac{1}{E\left[ \sum_{i=1}^N |C_i|^\alpha \log |C_i|  \right] } \int_{0}^\infty v^{\alpha-1} \left( P((\pm 1) R > v) - E\left[ \sum_{i=1}^{N} 1((\pm 1) C_{i} R > v ) \right]    \right) dv \\
&= \frac{E\left[ \left(\left( \sum_{i=1}^N C_i R_i +Q \right)^\pm\right)^\alpha - \sum_{i=1}^N \left((C_i R_i )^\pm \right)^\alpha \right]}{\alpha E\left[ \sum_{i=1}^N 
|C_i|^\alpha \log |C_i|  \right] }.
\end{align*}
\item if $P(C_j < 0) > 0$ for some $j \geq 1$, 
$$P(R > t) \sim P(R < -t) \sim H t^{-\alpha}, \qquad t \to \infty,$$
where
\begin{align*}
H &= \frac{1}{2E\left[ \sum_{i=1}^N |C_i|^\alpha \log |C_i|  \right] } \int_{0}^\infty v^{\alpha-1} \left( P(|R| > v) - E\left[ \sum_{i=1}^{N} 1( |C_{i} R| > v ) \right]    \right) dv \\
&= \frac{E\left[ \left| \sum_{i=1}^N C_i R_i + Q \right|^\alpha - \sum_{i=1}^N |C_i R_i |^\alpha \right]}{2\alpha E\left[ \sum_{i=1}^N |C_i|^\alpha \log |C_i|  \right] }.
\end{align*}
\end{enumerate}
\end{thm}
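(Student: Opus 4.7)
The plan is to apply Theorem~\ref{T.NewGoldie} to $R$ as constructed in \eqref{eq:ExplicitConstr}. The nonlattice hypothesis and the spectral conditions $E[\sum_{j=1}^N|C_j|^\alpha]=1$ and $E[\sum_{j=1}^N|C_j|^\alpha\log|C_j|]>0$ are given by assumption; the secondary bound $E[\sum_{j=1}^N|C_j|^\gamma]<\infty$ for some $\gamma<\alpha$ is immediate from the moment assumptions (take $\gamma=1$ in case $\alpha>1$, or $\gamma=\alpha/(1+\varepsilon)$ in case $0<\alpha\le 1$). Independence of $R$ from $(N,C_1,\dots,C_N)$ is built into the weighted branching tree construction. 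Two nontrivial conditions remain to be checked, namely (a) $E[|R|^\beta]<\infty$ for every $0<\beta<\alpha$, and (b) the Goldie absolute integrability conditions \eqref{eq:Goldie_condition1}--\eqref{eq:Goldie_condition2}.

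For (a) I would invoke Lemma~\ref{L.Moments_R}. The function $\beta\mapsto\rho_\beta=E[\sum_j|C_j|^\beta]$ is log-convex, being a sum of log-convex moment functions; it satisfies $\rho_\alpha=1$ and has right derivative at $\alpha$ equal to $E[\sum_j|C_j|^\alpha\log|C_j|]>0$, so $\rho_\beta<1$ throughout some left neighborhood of $\alpha$. When $\alpha>1$, the hypothesis $\rho_1<1$ combined with log-convexity extends this to $\rho_\beta<1$ on all of $[1,\alpha)$, while $E[(\sum_j|C_j|)^\beta]\le 1+E[(\sum_j|C_j|)^\alpha]<\infty$ supplies the auxiliary assumption of Lemma~\ref{L.Moments_R}. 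In either regime of $\alpha$, Lemma~\ref{L.Moments_R} therefore yields $E[|R|^\beta]<\infty$ for some $\beta$ arbitrarily close to $\alpha$ from below, and hence, by monotonicity of moments, for every $0<\beta<\alpha$.

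The main obstacle is (b). Using the distributional identity $R\stackrel{\mathcal{D}}{=}\sum_{j=1}^N C_jR_j+Q$ with $\{R_j\}$ iid copies of $R$ independent of $(Q,N,C_1,\dots,C_N)$, I would write
\begin{align*}
P(R>t)-E\Bigl[\textstyle\sum_{j=1}^N 1(C_jR>t)\Bigr]
&=\bigl[P(\textstyle\sum_j C_jR_j+Q>t)-P(\textstyle\sum_j C_jR_j>t)\bigr]\\
&\quad+\bigl[P(\textstyle\sum_j C_jR_j>t)-E\bigl[\textstyle\sum_j 1(C_jR_j>t)\bigr]\bigr].
\end{align*}
The first bracket is controlled by splitting on $\{|Q|\le\varepsilon t\}$: on that event the symmetric difference between $\{\sum_jC_jR_j>t\}$ and $\{\sum_jC_jR_j+Q>t\}$ is contained in $\{(1-\varepsilon)t<|\sum_jC_jR_j|<(1+\varepsilon)t\}$, and $t^{\alpha-1}$ times this probability is integrable thanks to $E[|R|^\beta]<\infty$ for $\beta<\alpha$ together with the moment bounds of Lemmas~\ref{L.MomentSmaller_1}--\ref{L.GeneralMoment}; on $\{|Q|>\varepsilon t\}$ one uses $E[|Q|^\alpha]<\infty$. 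The second bracket is a double-crossing term, pointwise dominated by $\sum_{j\ne k}1(|C_jR_j|>\delta t,\,|C_kR_k|>\delta t)$ plus an accumulation remainder $1(|\sum_jC_jR_j|>t,\,\max_j|C_jR_j|\le\delta t)$. The first contribution integrates to $o(t^{-\alpha})$ by independence of $R_j$ and $R_k$ and a H\"older split with two exponents $\beta_1,\beta_2<\alpha$ summing to more than $\alpha$; the second is handled by the $L^\beta$ partial-sum bounds. The analysis of $P(R<-t)$ is symmetric. The principal technical difficulty, relative to the nonnegative case of \cite{Jel_Olv_11}, is that the signs of the $C_jR_j$ block the clean inclusion/exclusion manipulations available there, forcing one to work with absolute values, which is precisely the setting handled by the matrix renewal framework of Theorem~\ref{T.NewGoldie}.

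Once (a) and (b) are verified, Theorem~\ref{T.NewGoldie} delivers the asymptotics with $H_\pm$ and $H$ in their integral forms. The closed-form expressions in the theorem statement follow from $\alpha\int_0^\infty v^{\alpha-1}P(X>v)\,dv=E[X^\alpha]$ applied to $X=R^\pm,(C_jR_j)^\pm,|R|,|C_jR_j|$, together with the substitution of $R\stackrel{\mathcal{D}}{=}\sum_iC_iR_i+Q$ in the resulting numerator; the interchange of sum, integral and expectation is justified by the absolute integrability in (b), which keeps the subtraction meaningful even when $E[|R|^\alpha]=\infty$.
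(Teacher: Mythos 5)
Your overall strategy matches the paper's: apply Theorem~\ref{T.NewGoldie} to the tree solution, get $E[|R|^\beta]<\infty$ for $\beta<\alpha$ from Lemma~\ref{L.Moments_R} (your convexity argument for $\rho_\beta<1$ near $\alpha$ is fine), and then verify \eqref{eq:Goldie_condition1}--\eqref{eq:Goldie_condition2}. The gap is in that last verification, which is where essentially all of the work in the paper lies. For the $Q$-perturbation term you bound the symmetric difference of $\{S+Q>t\}$ and $\{S>t\}$ on $\{|Q|\le\varepsilon t\}$ by the annulus $\{(1-\varepsilon)t<|S|\le(1+\varepsilon)t\}$ with a \emph{fixed} $\varepsilon$; but $\int_0^\infty t^{\alpha-1}P\bigl((1-\varepsilon)t<S\le(1+\varepsilon)t\bigr)\,dt = \tfrac{1}{\alpha}\bigl((1-\varepsilon)^{-\alpha}-(1+\varepsilon)^{-\alpha}\bigr)E[(S^+)^\alpha]$, and $E[(S^+)^\alpha]=\infty$ precisely in the nontrivial case where $R$ has an exact $\alpha$-power tail (and letting the threshold grow like $t^{1-\delta}$ instead forces a moment of $Q$ of order $\alpha/(1-\delta)>\alpha$, which is not assumed). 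The usable bound is the one with the \emph{random} width $|Q|$ around $S$: the integral is at most $\tfrac1\alpha E\bigl[\,\bigl|((S+Q)^+)^\alpha-(S^+)^\alpha\bigr|\,\bigr]$ (Lemma~\ref{L.NewIntegralIneq}), whose finiteness is exactly Lemma~\ref{L.ExtraQ} and requires the binomial-type expansion producing cross terms $E[(S^+)^{\alpha-i}|Q|^i]$ that only need moments of $S$ of order strictly below $\alpha$.

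The second bracket has a similar problem. The claimed pointwise domination of $\bigl|1(S>t)-\sum_j 1(C_jR_j>t)\bigr|$ by the two-big-jumps term plus $1(|S|>t,\max_j|C_jR_j|\le\delta t)$ is false: if exactly one $|C_jR_j|>\delta t$ and $C_jR_j$, $S$ sit on opposite sides of $t$ (e.g.\ $C_jR_j$ slightly above $t$, the remaining sum slightly negative), the difference is $1$ while both of your dominating terms vanish; controlling this single-big-jump boundary-crossing event is exactly what forces the estimates of the form $E[|C_iR_i|^{\beta-1}|S-C_iR_i|]$ via conditional independence in Lemmas~\ref{L.d_Moments_larger1}--\ref{L.d_Moments_smaller1}. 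Likewise, the no-big-jump remainder cannot be dispatched by ``$L^\beta$ partial-sum bounds'' with $\beta<\alpha$, since Markov at order $\beta<\alpha$ gives $t^{-\beta}$, which is not integrable against $t^{\alpha-1}$; one must exploit the truncation at $\delta t$, i.e.\ moments of order above $\alpha$ of truncated quantities. The paper circumvents all of these case analyses by inserting $\max_{1\le i\le N}T_i$ as the intermediate object and reducing everything, via Lemma~\ref{L.NewIntegralIneq}, to finiteness of expectations of \emph{differences} of $\alpha$-th powers (Lemmas~\ref{L.Alpha_Moments}, \ref{L.d_Moments_larger1}, \ref{L.d_Moments_smaller1}, \ref{L.Max_Approx}, \ref{L.ExtraQ}); this is the missing core of your argument. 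Your final conversion of the integral constants to moment form is fine in spirit, provided it is phrased (as in the paper) as a Fubini interchange for the difference, since the individual $\alpha$-moments are infinite.
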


\begin{rem}
 (i) When $\alpha > 1$, the condition $E\left[ \left( \sum_{i=1}^N |C_i| \right)^\alpha \right] < \infty$ is needed to ensure that the tails of $R$ are not dominated by $N$. In particular, if the $\{C_i\}$ are nonnegative iid and independent of $N$, the condition reduces to $E[N^\alpha] < \infty$ since $E[C^\alpha] < \infty$ is implied by the other conditions; see Theorems~4.2 and 5.4 in \cite{Jel_Olv_10}. Furthermore, when $0 < \alpha \leq 1$ the condition $E\left[ \left( \sum_{i=1}^N |C_i| \right)^\alpha \right] < \infty$ is redundant since $E\left[ \left( \sum_{i=1}^N |C_i| \right)^\alpha \right] \leq E\left[  \sum_{i=1}^N |C_i|^\alpha \right] = 1$, and the additional condition $E\left[ \left( \sum_{i=1}^N |C_i|^{\alpha/(1+\epsilon)} \right)^{1+\epsilon} \right] < \infty$ is needed. When the $\{C_i \}$ are nonnegative iid and independent of $N$ (given the other assumptions), the latter condition reduces to  $E[N^{1+\epsilon}] < \infty$, which is consistent with Theorem 4.2 in \cite{Jel_Olv_10}.  (ii) Note that the expressions for $H_\pm$ and $H$ given in terms of moments are more suitable for actually computing them, especially in the case of $\alpha$ being an integer (see Corollary 4.9 in \cite{Jel_Olv_11}).   When $\alpha$ is not an integer, we can derive bounds on $H_\pm$ and $H$ by using moment inequalities, e.g. in the case when $Q \geq 0$ and $\{C_i \geq 0\}$, the elementary inequality 
$\left( \sum_{i=1}^k x_i \right)^\alpha \ge \sum_{i=1}^k x_i^\alpha$ for $\alpha\ge1$ and $x_i \geq 0$, yields
$$
H_+ \ge \frac{E\left[ Q^\alpha  \right]}{\alpha E\left[ \sum_{i=1}^N C_i^\alpha \log C_i  \right] }>0.
$$
\end{rem}

Before giving the proof of Theorem \ref{T.LinearRecursion}, we state the following preliminary lemmas; their proofs are contained in Section \ref{SS.ProofsLinearRec}.  With some abuse of notation, we will use throughout the paper $\max_{1\leq i \leq N} x_i$ to denote $\sup_{1 \leq i < N+1} x_i$ in case $N = \infty$.

\begin{lem} \label{L.d_Moments_larger1}
Suppose $(N, C_1, \dots, C_N)$ is a random vector with $N \in \mathbb{N}$ and $\{C_i\}$ real valued random variables.  Let $\{R_i\}_{i \geq 1}$ be a sequence of iid real valued random variables having the same distribution as $R$, independent of $(N, C_1,\dots, C_N)$. Further assume $\sum_{i=1}^N \left| C_i R_i \right| < \infty$ a.s., $E\left[ \left( \sum_{i=1}^N |C_i| \right)^\beta \right] < \infty$ for some $\beta > 1$, and $E[|R|^\eta] < \infty$ for all $0 < \eta < \beta$. Then, for  $d(t)$ equal to any of the functions $t^+$, $t^-$ or $|t|$, 
$$E\left[ \left| d\left( \sum_{i=1}^N C_i R_i \right)^\beta -  \sum_{i=1}^N d(C_iR_i)^\beta \right| \right] < \infty.$$
\end{lem}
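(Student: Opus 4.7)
The plan is to combine an algebraic Taylor-type bound with a telescoping identity and a conditional moment estimate that exploits the iid structure of $\{R_j\}$ together with the independence of $(R_j)_{j\ge 1}$ from $(N,C_1,\dots,C_N)$.

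First I would establish the pointwise inequality
$$ \bigl|d(a+b)^\beta - d(a)^\beta - d(b)^\beta\bigr| \;\le\; K_\beta\bigl(|a|^{\beta-1}|b| + |a|\,|b|^{\beta-1}\bigr), \qquad a,b\in\mathbb{R}, $$
valid for each $d\in\{t^+,t^-,|t|\}$ and every $\beta\ge 1$. Since $d(t)^\beta$ is convex with derivative bounded in absolute value by $\beta|t|^{\beta-1}$, the mean value theorem gives $|d(a+b)^\beta-d(a)^\beta|\le\beta(|a|+|b|)^{\beta-1}|b|$; the stated form then follows from $(|a|+|b|)^{\beta-1}\le 2^{\beta-1}(|a|^{\beta-1}+|b|^{\beta-1})$ together with $d(b)^\beta\le|b|^\beta$.

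Writing $Y_i=C_i R_i$ and $T_k=\sum_{j=1}^k Y_j$ (so $T_0=0$, $T_N=S$), telescoping and the inequality above yield
$$ \Bigl|d(S)^\beta-\sum_{i=1}^N d(Y_i)^\beta\Bigr| \;\le\; K_\beta\sum_{i=1}^N\bigl(|T_{i-1}|^{\beta-1}|Y_i|+|T_{i-1}|\,|Y_i|^{\beta-1}\bigr). $$
To take expectations I would condition on $\mathcal{G}=\sigma(N,C_1,\dots,C_N)$. Since $T_{i-1}$ and $Y_i$ involve disjoint subsets of the iid family $\{R_j\}$, they are $\mathcal{G}$-conditionally independent. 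Writing $\sigma_k=\sum_{j\le k}|C_j|$, the triangle inequality gives $E[|T_{i-1}|\mid\mathcal{G}]\le\sigma_{i-1}E|R|$, while (conditional) Jensen's inequality when $\beta-1\le 1$ or Minkowski's inequality when $\beta-1\ge 1$ yields $E[|T_{i-1}|^{\beta-1}\mid\mathcal{G}]\le K'_\beta \sigma_{i-1}^{\beta-1}$ for a constant $K'_\beta$ depending only on $\beta$ and on $E|R|^\eta$ with $\eta\in\{1,\beta-1\}\subset(0,\beta)$, both finite by hypothesis. Likewise $E[|Y_i|^{\beta-1}\mid\mathcal{G}]=|C_i|^{\beta-1}E|R|^{\beta-1}$.

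Combining these ingredients, the first cross term reduces to
$$ \sum_i E\bigl[|T_{i-1}|^{\beta-1}|Y_i|\bigr] \;\le\; K''_\beta\, E\Bigl[\sum_i |C_i|\,\sigma_{i-1}^{\beta-1}\Bigr] \;\le\; \frac{K''_\beta}{\beta}\,E\bigl[\bigl(\textstyle\sum_i|C_i|\bigr)^\beta\bigr], $$
where the last step is the discrete integral comparison $\sum_i|C_i|\,\sigma_{i-1}^{\beta-1}=\sum_i(\sigma_i-\sigma_{i-1})\,\sigma_{i-1}^{\beta-1}\le\int_0^{\sigma_N}u^{\beta-1}\,du=\sigma_N^\beta/\beta$, which uses only that $u\mapsto u^{\beta-1}$ is nondecreasing for $\beta\ge 1$. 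By hypothesis $E[(\sum|C_i|)^\beta]<\infty$, so this piece is finite. The second cross term is handled in the same spirit, reducing after the independence split to $E|R|\cdot E|R|^{\beta-1}\cdot E\bigl[\sum_{j<i\le N}|C_j|\,|C_i|^{\beta-1}\bigr]$.

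The main obstacle is showing that the deterministic double sum $\sum_{j<i}|C_j|\,|C_i|^{\beta-1}$ has expectation bounded by $E[(\sum|C_i|)^\beta]$. For $\beta\ge 2$ this is immediate from the subadditivity $\sum_i|C_i|^{\beta-1}\le(\sum_i|C_i|)^{\beta-1}$, which gives the bound $E[(\sum|C_i|)(\sum|C_j|)^{\beta-1}]=E[(\sum|C_i|)^\beta]$. For $1<\beta<2$ this subadditivity fails, and one must exploit either the constraint $j<i$ or the exchangeability of the $R_j$'s: averaging the telescoping over a uniformly random order of the indices $\{1,\dots,N\}$ interchanges the roles of partial sum and new increment, and after combining both orderings with the same discrete integral comparison $\sum_i(\sigma_i-\sigma_{i-1})\sigma_{i-1}^{\beta-1}\le\sigma_N^\beta/\beta$, one again lands inside $K_\beta'''\,E[(\sum|C_i|)^\beta]<\infty$. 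This is the only genuinely delicate step of the argument.
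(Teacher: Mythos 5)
Your telescoping strategy is a genuinely different route from the paper's (which splits $S=\sum_i C_iR_i$ into $S_+-S_-$, invokes the multinomial-expansion estimate of Lemma \ref{L.Alpha_Moments}, and then handles the remaining pieces by conditional independence and Jensen), and for $\beta\ge 2$ your argument does go through. The gap is exactly at the step you flag as delicate, $1<\beta<2$, and it is not repaired by the ordering/exchangeability device you propose. After conditioning, your second cross term requires $E\bigl[\sum_{j<i}|C_j|\,|C_i|^{\beta-1}\bigr]$ to be controlled by $E\bigl[\bigl(\sum_i|C_i|\bigr)^\beta\bigr]$, and this is false under the lemma's hypotheses: take $R\equiv 1$ and, given $N=n$, $C_i=1/n$ for all $i$. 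Then $\sum_i|T_{i-1}|\,|Y_i|^{\beta-1}=\sum_i\sigma_{i-1}|C_i|^{\beta-1}$ is of order $N^{2-\beta}$, while $\bigl(\sum_i|C_i|\bigr)^\beta\equiv 1$; choosing $N$ with $E[N^{2-\beta}]=\infty$ (e.g. $P(N=n)$ proportional to $n^{-(3-\beta)}$) satisfies every hypothesis of the lemma, yet $E\bigl[\sum_i|T_{i-1}|\,|Y_i|^{\beta-1}\bigr]=\infty$. Since in this example all weights are equal, the offending sum is already permutation-symmetric, so averaging the telescoping over a uniformly random order of the indices changes nothing; summing over both orders of each pair just produces $\sum_{j\ne i}|Y_j|\,|Y_i|^{\beta-1}$, which is of the same divergent size. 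Hence no rearrangement applied to your two-term pointwise bound can close the case $1<\beta<2$.

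The repair stays within your framework but needs a sharper pointwise inequality: for $1<\beta\le 2$ one has $\bigl|d(a+b)^\beta-d(a)^\beta-d(b)^\beta\bigr|\le K_\beta\,|a|^{\beta-1}|b|$ for all real $a,b$ (prove it assuming $|a|\le|b|$ by applying the mean value theorem in the smaller variable, so the increment is $|a|$, and using $d(a)^\beta\le|a|^\beta\le|a|\,|b|^{\beta-1}\le|a|^{\beta-1}|b|$; the case $|a|\ge|b|$ gives the same bound directly). With this, telescoping produces only the good term $\sum_i|T_{i-1}|^{\beta-1}|Y_i|$, which your conditional Jensen plus discrete-integral comparison bounds by a constant times $E\bigl[\bigl(\sum_i|C_i|\bigr)^\beta\bigr]$; the two-term bound is then needed only for $\beta\ge 2$, where your subadditivity step $\sum_i|C_i|^{\beta-1}\le\bigl(\sum_i|C_i|\bigr)^{\beta-1}$ is valid. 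A related, smaller flaw: your stated derivation of the two-term inequality (mean value theorem with increment $|b|$, then $d(b)^\beta\le|b|^\beta$) leaves an uncancelled $|b|^\beta$ term, and $\sum_i|Y_i|^\beta$ is not integrable here since $E|R|^\beta$ is not assumed finite; the inequality itself is true, but it must be proved by taking the increment in the smaller of the two arguments, as above.
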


\begin{lem} \label{L.d_Moments_smaller1}
Suppose $(N, C_1, \dots, C_N)$ is a random vector with $N \in \mathbb{N}$ and $\{C_i\}$ real valued random variables.  Let $\{R_i\}_{i \geq 1}$ be a sequence of iid real valued random variables having the same distribution as $R$, independent of $(N, C_1,\dots, C_N)$. Further assume $\sum_{i=1}^N \left| C_i R_i \right| < \infty$ a.s., $E\left[ \sum_{i=1}^N |C_i|^\beta  \right] < \infty$, $E\left[ \left( \sum_{i=1}^N |C_i|^{\beta/(1+\epsilon)} \right)^{1+\epsilon} \right]$ for some $0 < \beta \leq 1$, $0 < \epsilon < 1$, and $E[|R|^\eta] < \infty$ for all $0 < \eta < \beta$. Then, for  $d(t)$ equal to any of the functions $t^+$, $t^-$ or $|t|$, 
$$E\left[ \left| d\left( \sum_{i=1}^N C_i R_i \right)^\beta -  \sum_{i=1}^N d(C_iR_i)^\beta \right| \right] < \infty.$$
\end{lem}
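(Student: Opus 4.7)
I first dispose of the $d\in\{t^+,t^-\}$ cases by reducing them to $d(t)=|t|$. Writing $T=\sum_i C_iR_i$ and using the identity $((x)^+)^\beta + ((x)^-)^\beta = |x|^\beta$ for every $x\in\mathbb{R}$ and $\beta>0$, one has
\[
\Bigl[\sum_{i=1}^N ((C_iR_i)^+)^\beta - ((T)^+)^\beta\Bigr] + \Bigl[\sum_{i=1}^N ((C_iR_i)^-)^\beta - ((T)^-)^\beta\Bigr] = \sum_{i=1}^N |C_iR_i|^\beta - |T|^\beta;
\]
both bracketed terms on the left are nonnegative by subadditivity of $t\mapsto t^\beta$ on $[0,\infty)$ (valid for $0<\beta\le 1$), so finiteness of the expectation of the right-hand side implies finiteness for $d=t^\pm$ as well.

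For the case $d(t)=|t|$, set $T_k=\sum_{i\le k}C_iR_i$ and telescope,
\[
\sum_{i=1}^N|C_iR_i|^\beta - |T|^\beta = \sum_{k=2}^N\bigl(|T_{k-1}|^\beta + |C_kR_k|^\beta - |T_k|^\beta\bigr).
\]
A short case analysis on signs of $a,b\in\mathbb{R}$ gives the elementary sharpening
\[
|a|^\beta + |b|^\beta - |a+b|^\beta \le 2\min(|a|,|b|)^\beta\qquad(0<\beta\le 1),
\]
so each summand is at most $2\min(|T_{k-1}|,|C_kR_k|)^\beta$. Conditioning on $\mathcal{G}_k=\sigma(\{C_i\};R_1,\ldots,R_{k-1})$ and integrating over $R_k$, Markov's inequality $P(|R|>r)\le r^{-\gamma}E[|R|^\gamma]$ together with the truncated-moment bound $E[|R|^\beta\mathbf{1}\{|R|\le M\}]\le M^{\beta-\gamma}E[|R|^\gamma]$ (valid for any $\gamma\in(0,\beta)$) give
\[
E\bigl[\min(|T_{k-1}|,|C_kR_k|)^\beta\mid\mathcal{G}_k\bigr] \le 2E[|R|^\gamma]\,|T_{k-1}|^{\beta-\gamma}|C_k|^\gamma.
\]

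I then pick $\gamma=\beta\epsilon/(1+\epsilon)$, so that $\beta-\gamma=\beta/(1+\epsilon)\in(0,1]$, and use subadditivity once more to get $|T_{k-1}|^{\beta/(1+\epsilon)}\le \sum_{j<k}|C_jR_j|^{\beta/(1+\epsilon)}$. Integrating over the $R_j$ with $j<k$ (all of $E[|R|^{\gamma}]$, $E[|R|^{\beta/(1+\epsilon)}]$, $E[|R|^{\beta\epsilon/(1+\epsilon)}]$ are finite by hypothesis since each exponent lies strictly below $\beta$) and summing over $k$ reduces the problem to proving
\[
E\Bigl[\sum_{j<k} |C_j|^{\beta/(1+\epsilon)}|C_k|^{\beta\epsilon/(1+\epsilon)}\Bigr]<\infty.
\]

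The main obstacle is this last display. The naive chain $\sum_{j<k}\le (\sum|C|^{\beta/(1+\epsilon)})(\sum|C|^{\beta\epsilon/(1+\epsilon)})$ followed by H\"older at conjugate exponents $(1+\epsilon,(1+\epsilon)/\epsilon)$ produces an unwanted factor $E[(\sum|C|^{\beta\epsilon/(1+\epsilon)})^{(1+\epsilon)/\epsilon}]$ that is not directly furnished by the hypothesis. Closing this gap requires writing $|C|^{\beta\epsilon/(1+\epsilon)}=(|C|^{\beta/(1+\epsilon)})^{\epsilon}$ and combining super-additivity $\sum y_i^{1+\epsilon}\le (\sum y_i)^{1+\epsilon}$ with a H\"older/power-mean interpolation (where the strict inequality $\epsilon<1$ is used essentially) to dominate the bad factor by $E[(\sum|C|^{\beta/(1+\epsilon)})^{1+\epsilon}]$, finite by hypothesis. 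This delicate interpolation between the two given moment conditions on $\{C_i\}$ is where I expect the bulk of the careful calculation to live; it is also the reason the $\beta\le 1$ case requires strictly stronger integrability than the convex-case Lemma~\ref{L.d_Moments_larger1}.
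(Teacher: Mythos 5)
Your opening reductions are fine (the passage from $d=t^{\pm}$ to $d=|t|$, the telescoping identity, the inequality $|a|^\beta+|b|^\beta-|a+b|^\beta\le 2\min(|a|,|b|)^\beta$, and the conditional truncation bound are all correct), but the argument cannot be completed from the display you reduce to, so the ``delicate interpolation'' you defer to does not exist. With $y_i=|C_i|^{\beta/(1+\epsilon)}$ the hypotheses give exactly $E\bigl[\bigl(\sum_i y_i\bigr)^{1+\epsilon}\bigr]<\infty$ and $E\bigl[\sum_i y_i^{1+\epsilon}\bigr]<\infty$, while you need $E\bigl[\sum_k\sum_{j<k} y_j\,y_k^{\epsilon}\bigr]<\infty$, and this is \emph{not} implied by the hypotheses. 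Counterexample: let the $C_i$ be equal and depend only on $N$, with $y_i=N^{-q/(1+\epsilon)}$, and $P(N=n)\propto n^{-p}$ with $p+q\in(2+\epsilon,3]$ (possible precisely because $\epsilon<1$). Then $E[(\sum_i y_i)^{1+\epsilon}]=E[N^{1+\epsilon-q}]<\infty$ and $E[\sum_i y_i^{1+\epsilon}]=E[N^{1-q}]<\infty$, but $E[\sum_{j<k}y_jy_k^{\epsilon}]\asymp E[N^{2-q}]=\infty$; all remaining hypotheses of the lemma are easily arranged. Moreover the inequality you hope to prove goes the wrong way: since $\sum_k y_k^{\epsilon}\ge(\sum_k y_k)^{\epsilon}$ for $0<\epsilon\le1$, one always has $E[(\sum_k|C_k|^{\beta\epsilon/(1+\epsilon)})^{(1+\epsilon)/\epsilon}]\ge E[(\sum_k|C_k|^{\beta/(1+\epsilon)})^{1+\epsilon}]$, so the ``bad factor'' can never be dominated by the assumed moment.

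The loss occurs at the exponent split: you give the large exponent $\beta-\gamma=\beta/(1+\epsilon)$ to the aggregate $T_{k-1}$ and the small one $\gamma=\beta\epsilon/(1+\epsilon)$ to the new term, and then push the power inside the sum via subadditivity, which replaces $(\sum_j|C_j|^{\beta/(1+\epsilon)})^{\epsilon}$-type quantities by the strictly larger $\sum_j|C_j|^{\beta\epsilon/(1+\epsilon)}$. The cure is the opposite assignment together with Jensen instead of subadditivity: bound $\min(|T_{k-1}|,|C_kR_k|)^{\beta}\le|T_{k-1}|^{\beta\epsilon/(1+\epsilon)}|C_kR_k|^{\beta/(1+\epsilon)}$, and then estimate $E\bigl[|T_{k-1}|^{\beta\epsilon/(1+\epsilon)}\mid \{C_i\},N\bigr]\le\bigl(E[|R|^{\beta/(1+\epsilon)}]\sum_{j<k}|C_j|^{\beta/(1+\epsilon)}\bigr)^{\epsilon}$ by conditional Jensen applied to the concave power $\epsilon$, keeping the $\epsilon$-power outside the sum. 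Summing over $k$ then gives the bound $(E[|R|^{\beta/(1+\epsilon)}])^{1+\epsilon}E[(\sum_j|C_j|^{\beta/(1+\epsilon)})^{1+\epsilon}]$, which is exactly the assumed moment, and your telescoping argument closes. This exponent-assignment-plus-Jensen device is the analytic heart of the paper's proof as well, although the paper uses a different decomposition: it splits along $\{S_+\le S_-\}$ versus $\{S_+>S_-\}$, bounds the main piece via $((C_iR_i)^+)^{\beta}\le|C_iR_i|^{\beta/(1+\epsilon)}|S-C_iR_i|^{\beta\epsilon/(1+\epsilon)}$ together with conditional independence and Jensen, and handles another piece through Lemma~\ref{L.Max_Approx}. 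As written, however, your proposal has a genuine gap at its central step.
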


\begin{lem} \label{L.Max_Approx}
Suppose $(N, C_1, \dots, C_N)$ is a random vector, with $N \in \mathbb{N} \cup \{\infty\}$ and $\{C_i\}_{i=1}^N$ real valued weights, and let $\{R_i\}_{i \geq 1}$ be a sequence of iid random variables having the same distribution as $R$, independent of $(N,C_1,\dots, C_N)$. For $\alpha >0$, suppose that $\sum_{i=1}^N |C_i R_i|^\alpha < \infty$ a.s. and $E[|R|^\beta]< \infty$ for any $0 < \beta < \alpha$.  Furthermore, assume that  $E\left[ \left( \sum_{i=1}^N |C_i|^{\alpha/(1+\epsilon)}\right)^{1+\epsilon} \right] < \infty$ for some $0 < \epsilon< 1$. Then, 
\begin{align*}
0 &\leq \int_{0}^\infty \left( E\left[ \sum_{i=1}^N 1(T_i > t ) \right] - P\left( \max_{1\leq i \leq N} T_i > t \right) \right)  t^{\alpha -1} \, dt \\
&= \frac{1}{\alpha} E \left[  \sum_{i=1}^N  \left( T_i^+ \right)^\alpha - \left( \left( \max_{1\leq i \leq N} T_i \right)^+ \right)^\alpha   \right]  < \infty,
\end{align*}
where $T_i$ can be taken to be any of the random variables $C_i R_i$, $-C_i R_i$, or $|C_i R_i|$. 
\end{lem}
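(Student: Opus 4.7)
\textbf{Proof plan for Lemma~\ref{L.Max_Approx}.} The plan is to split the claim into (i) an identity between the integral and the expectation, (ii) nonnegativity of both sides, and (iii) finiteness of the expectation, which is the technical crux.

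\emph{Identity and nonnegativity.} For any random variable $Y$ and $\alpha>0$, the layer-cake formula $(Y^+)^\alpha = \alpha\int_0^\infty t^{\alpha-1}\mathbf{1}(Y>t)\,dt$, applied with $Y=T_i$ (then summed over $i\le N$) and with $Y=M:=\max_{1\le i\le N}T_i$, combined with Tonelli's theorem (valid since all integrands are nonnegative), gives
\[
\int_0^\infty\Big( E\Big[\sum_{i=1}^N\mathbf{1}(T_i>t)\Big]-P(M>t) \Big)\,t^{\alpha-1}\,dt \;=\; \frac{1}{\alpha}\,E\Big[\sum_{i=1}^N(T_i^+)^\alpha-(M^+)^\alpha\Big]
\]
as elements of $[0,\infty]$. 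Nonnegativity of both sides is immediate from the pointwise bound $\sum_{i=1}^N\mathbf{1}(T_i>t)\ge\mathbf{1}(M>t)$.

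\emph{Reduction for finiteness.} Setting $X_i:=(T_i^+)^\alpha$, the integrand inside the expectation vanishes on $\{M\le 0\}$, while on $\{M>0\}$ we have $(M^+)^\alpha=\max_iX_i$. I would use the sharp pointwise bound
\[
\sum_{i=1}^N X_i-\max_{i}X_i\;\le\;\sum_{1\le i<j\le N}\min(X_i,X_j),
\]
verified by ordering the $X_i$'s (the $k$-th largest appears $k-1$ times as a minimum on the right), followed by $\min(a,b)\le a^{1/(1+\epsilon)}b^{\epsilon/(1+\epsilon)}$ together with its symmetric counterpart. Substituting $X_i\le |C_iR_i|^\alpha$ and taking expectation, the iid nature of the $R_i$ and their independence from the $\{C_k\}$ separate $R$- from $C$-contributions, and the two resulting $R$-moments $E[|R|^{\alpha/(1+\epsilon)}]$ and $E[|R|^{\alpha\epsilon/(1+\epsilon)}]$ are finite because the exponents are strictly below $\alpha$. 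The problem thus reduces to establishing
\[
E\Big[\sum_{i\ne j}|C_i|^{\alpha/(1+\epsilon)}|C_j|^{\alpha\epsilon/(1+\epsilon)}\Big]<\infty.
\]

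\emph{Main obstacle.} Controlling this cross $C$-sum is the delicate step. The naive Hölder split with conjugate exponents $(1+\epsilon,(1+\epsilon)/\epsilon)$ matches the first factor to the hypothesis $E[(\sum|C_i|^{\alpha/(1+\epsilon)})^{1+\epsilon}]<\infty$, but the conjugate factor $E[(\sum|C_j|^{\alpha\epsilon/(1+\epsilon)})^{(1+\epsilon)/\epsilon}]$ is not directly controlled, since the base exponent $\alpha\epsilon/(1+\epsilon)$ lies below $\alpha/(1+\epsilon)$ while the outer power is larger, so $\ell^p$-norm monotonicity runs the wrong way. I expect to close this gap by combining a sharper choice of $\theta$ in $\min(a,b)\le a^\theta b^{1-\theta}$ with the interpolation inequality $|C_i|^\gamma\le|C_i|^{\alpha/(1+\epsilon)}+|C_i|^\alpha$ (which gives $E[\sum|C_i|^\gamma]<\infty$ for all $\gamma\in[\alpha/(1+\epsilon),\alpha]$), or else by conditioning on $(N,C_1,\ldots,C_N)$ and exploiting that the $X_i$'s are then independent, so that the second-order statistic $X_{(2)}$ has considerably lighter tails than the $X_i$'s individually.
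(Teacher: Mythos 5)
Your identity and nonnegativity step is fine in substance, and matches the paper (one small caution: you cannot apply the layer-cake formula to $\sum_i$ and to $M$ separately and then subtract, since $E\bigl[\sum_i (T_i^+)^\alpha\bigr]$ may be infinite; the paper applies Fubini to the single nonnegative integrand $\sum_i 1(T_i>t)-1(M>t)$ and invokes the a.s.\ finiteness of $\sum_i|C_iR_i|^\alpha$ to make the pathwise difference of integrals well defined). The genuine gap is exactly the step you flag as the ``main obstacle,'' and it cannot be closed along your route: the reduction to $E\bigl[\sum_{i\ne j}|C_i|^{\alpha\theta}|C_j|^{\alpha(1-\theta)}\bigr]<\infty$ is strictly stronger than the lemma's hypotheses. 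Take $C_i\equiv1$ and $N$ with $E[N^{1+\epsilon}]<\infty$ but $E[N^{2}]=\infty$; every hypothesis of the lemma holds, yet for every $\theta\in(0,1)$ your reduced quantity equals $E[N(N-1)]=\infty$. The loss occurs already at the pairwise bound $\sum_iX_i-\max_iX_i\le\sum_{i<j}\min(X_i,X_j)$: its conditional expectation is of order $N^2$, while the hypothesis only controls a moment of order $1+\epsilon<2$ of $\sum_i|C_i|^{\alpha/(1+\epsilon)}$. Neither a different choice of $\theta$ nor the interpolation bound on $E[\sum_i|C_i|^\gamma]$ (which is linear in $N$, and which anyway invokes $E[\sum_i|C_i|^\alpha]<\infty$, not assumed in this lemma) can repair a quadratic overcount.

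The finiteness step, which the paper defers to Lemma~4.7 of \cite{Jel_Olv_11}, avoids pairs altogether. With $X_i=(T_i^+)^\alpha$, use $\sum_iX_i-\max_iX_i\le\sum_iX_i\,1\bigl(\max_{j\ne i}X_j\ge X_i\bigr)$ and, on $\{X_i>0\}$, the Markov-type bound $1\bigl(\max_{j\ne i}X_j\ge X_i\bigr)\le\bigl(X_i^{-1/(1+\epsilon)}\sum_{j\ne i}X_j^{1/(1+\epsilon)}\bigr)^{\epsilon}$, which gives the pathwise bound $\sum_i|C_iR_i|^{\alpha/(1+\epsilon)}\bigl(\sum_{j\ne i}|C_jR_j|^{\alpha/(1+\epsilon)}\bigr)^{\epsilon}$. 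Conditioning on $(N,C_1,\dots,C_N)$, using the independence of $R_i$ from $\{R_j\}_{j\ne i}$ and Jensen's inequality for the concave power $\epsilon\le 1$, this has expectation at most $E\bigl[|R|^{\alpha/(1+\epsilon)}\bigr]^{1+\epsilon}E\bigl[\bigl(\sum_i|C_i|^{\alpha/(1+\epsilon)}\bigr)^{1+\epsilon}\bigr]$, finite because $\alpha/(1+\epsilon)<\alpha$ and by hypothesis. This is the mechanism your ``second-order statistic'' remark is groping toward, but the workable tool is the indicator-to-fractional-power bound combined with Jensen, not a sum of pairwise minima.
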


\begin{lem} \label{L.ExtraQ}
Let $(Q, N, C_1, \dots, C_N)$ be a random vector with $N \in \mathbb{N} \cup \{\infty\}$, $\{C_i\}_{i=1}^N$ real valued weights and $Q$ real valued, and let $\{R_i\}_{i \geq 1}$ be a sequence of iid random variables independent of $(Q, N,C_1,\dots, C_N)$. Suppose that for some $\alpha > 0$ we have $E[|Q|^\alpha] < \infty$, $E\left[ \left( \sum_{i=1}^N |C_i| \right)^\alpha \right] < \infty$,  $E[|R|^\beta] < \infty$ for any $0 < \beta < \alpha$, and $\sum_{i=1}^N |C_i R_i| < \infty$ a.s. Then, for $d(t)$ equal to any of the functions $t^+$, $t^-$ or $|t|$, 
$$E \left[ \left| d\left( \sum_{i=1}^N C_i R_i + Q  \right)^\alpha - d\left( \sum_{i=1}^N C_i R_i   \right)^\alpha \right|   \right]  < \infty .$$
\end{lem}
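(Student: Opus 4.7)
The plan is to set $X=\sum_{i=1}^N C_iR_i$ (finite a.s.\ by hypothesis) and exploit that each candidate $d\in\{t^+,t^-,|t|\}$ is $1$-Lipschitz and nonnegative, so $|d(X+Q)-d(X)|\le|Q|$ and $d(X+Q)\vee d(X)\le|X|+|Q|$. For $0<\alpha\le1$ one can invoke the subadditivity $|a^\alpha-b^\alpha|\le|a-b|^\alpha$ on nonnegative reals to obtain $|d(X+Q)^\alpha-d(X)^\alpha|\le|Q|^\alpha$, and the integrability follows at once from $E[|Q|^\alpha]<\infty$.

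For $\alpha>1$ I would combine the mean-value bound $|a^\alpha-b^\alpha|\le\alpha(a\vee b)^{\alpha-1}|a-b|$ with the elementary $(u+v)^{\alpha-1}\le C_\alpha(u^{\alpha-1}+v^{\alpha-1})$ to get
$$\bigl|d(X+Q)^\alpha-d(X)^\alpha\bigr|\le C_\alpha\bigl(|X|^{\alpha-1}|Q|+|Q|^\alpha\bigr).$$
The $|Q|^\alpha$ contribution is immediate, so the whole game is to show $E[|X|^{\alpha-1}|Q|]<\infty$. I would condition on $\mathcal{G}=\sigma(N,C_1,\dots,C_N)$; because $\{R_i\}$ is independent of $(Q,\mathcal{G})$ this factors as $E[|Q|\cdot E[|X|^{\alpha-1}\mid\mathcal{G}]]$. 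Bounding $|X|\le\sum_i|C_iR_i|$ and applying either Jensen's inequality (for $1<\alpha\le2$, using concavity of $t\mapsto t^{\alpha-1}$) or conditional Minkowski (for $\alpha>2$) yields, in both cases,
$$E\bigl[|X|^{\alpha-1}\bigm|\mathcal{G}\bigr]\le K_\alpha\Bigl(\sum_{i=1}^N|C_i|\Bigr)^{\alpha-1},$$
where $K_\alpha$ is a power of $E[|R|]$ or $E[|R|^{\alpha-1}]$, both finite because the exponent is strictly less than $\alpha$. A final H\"older step with conjugate exponents $\alpha/(\alpha-1)$ and $\alpha$ bounds the remaining expectation by $(E[(\sum_i|C_i|)^\alpha])^{(\alpha-1)/\alpha}(E[|Q|^\alpha])^{1/\alpha}<\infty$.

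The main obstacle I anticipate is precisely this H\"older-after-conditioning trick: since $E[|R|^\alpha]$ is \emph{not} assumed finite (that is the very tail one is trying to pin down in the ambient theorem), the one-shot H\"older estimate $E[|X|^{\alpha-1}|Q|]\le(E[|X|^\alpha])^{(\alpha-1)/\alpha}(E[|Q|^\alpha])^{1/\alpha}$ is unavailable. Peeling the $\{R_i\}$-randomness off by conditioning on $\mathcal{G}$ before applying H\"older --- so that only a sub-$\alpha$ moment of $|R|$ is ever required --- is what makes the estimate go through, and the assumption $E[(\sum_i|C_i|)^\alpha]<\infty$ enters at exactly this last step.
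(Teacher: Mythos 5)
Your proof is correct, and for the harder case $\alpha>1$ it takes a genuinely different (and more self-contained) route than the paper. The paper first treats $d(t)=t^+$ by splitting on the signs of $Q$ and $S+Q$, then for $\alpha>1$ iterates the inequality $(x+t)^\kappa\le x^\kappa+\kappa(x+t)^{\kappa-1}t$ a total of $p-1=\lceil\alpha\rceil-1$ times to get $(x+t)^\alpha\le x^\alpha+\alpha^p t^\alpha+\alpha^p\sum_{i=1}^{p-1}x^{\alpha-i}t^i$, which produces a whole family of cross moments $E[(S^+)^{\alpha-i}|Q|^i]$ whose finiteness is imported from Lemma~4.8 of \cite{Jel_Olv_11}; the cases $d(t)=t^-$ and $d(t)=|t|$ are then deduced by reflection and by $|x|^\alpha=(x^+)^\alpha+(x^-)^\alpha$. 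You instead exploit that all three choices of $d$ are $1$-Lipschitz, apply the mean-value bound $|a^\alpha-b^\alpha|\le\alpha(a\vee b)^{\alpha-1}|a-b|$, and are left with the single cross moment $E[|X|^{\alpha-1}|Q|]$, which you control directly by conditioning out the $\{R_i\}$ (so that only the sub-$\alpha$ moments $E[|R|]$ or $E[|R|^{\alpha-1}]$ are ever used, via conditional Jensen or Minkowski) and finishing with H\"older against $E[(\sum_i|C_i|)^\alpha]<\infty$ and $E[|Q|^\alpha]<\infty$. Your diagnosis of the key obstacle is exactly right: a one-shot H\"older would require $E[|R|^\alpha]$, which is unavailable, and peeling off the $R$-randomness first is what both proofs ultimately rely on (the paper buries this inside the cited Lemma~4.8, where $Q$ and $(N,C_1,\dots,C_N)$ may be dependent). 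One small point to state precisely: since $Q$ is generally \emph{not} independent of $(N,C_1,\dots,C_N)$, the factorization $E[|X|^{\alpha-1}|Q|]=E\bigl[|Q|\,E[|X|^{\alpha-1}\mid\mathcal{G}]\bigr]$ should be justified by conditioning on $\sigma(Q,N,C_1,\dots,C_N)$ and using $E[|X|^{\alpha-1}\mid Q,\mathcal{G}]=E[|X|^{\alpha-1}\mid\mathcal{G}]$, which holds because $\{R_i\}$ is independent of the pair $(Q,\mathcal{G})$ --- exactly the independence you invoke, so this is a matter of wording rather than a gap. Your unified Lipschitz treatment also handles $t^+$, $t^-$ and $|t|$ in one pass, trading the paper's explicit expansion (which meshes with its moment computations elsewhere) for a shorter argument with a single cross term.
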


\bigskip

\begin{proof}[Proof of Theorem \ref{T.LinearRecursion}]
By Lemma \ref{L.Moments_R} we know that $E[|R|^\beta] < \infty$ for any $0 < \beta < \alpha$. To verify that $E\left[ \sum_{i=1}^N |C_i|^\gamma \right] < \infty$ for some $0 \leq \gamma < \alpha$ note that if $\alpha > 1$ we have, by the assumptions of the theorem and Jensen's inequality,
$$E\left[ \sum_{i=1}^N |C_i|^\gamma \right] \leq E\left[ \left( \sum_{i=1}^N |C_i| \right)^\gamma \right] \leq  \left( E\left[ \left( \sum_{i=1}^N |C_i| \right)^\alpha \right]  \right)^{\gamma/\alpha} < \infty$$
for any $1 \leq \gamma < \alpha$. If $0 < \alpha \leq 1$, then for $\gamma = \alpha(1+\epsilon/2)/(1+\epsilon) < \alpha$ we have
$$E\left[ \sum_{i=1}^N |C_i|^\gamma \right] \leq E\left[ \left( \sum_{i=1}^N |C_i|^{\alpha/(1+\epsilon)} \right)^{1+\epsilon/2} \right] \leq \left( E\left[ \left( \sum_{i=1}^N |C_i|^{\alpha/(1+\epsilon)} \right)^{1+\epsilon} \right] \right)^\frac{1+\epsilon/2}{1+\epsilon} < \infty.$$

The statement of the theorem with the first expressions for $H_+, H_-, H$ will follow from Theorem \ref{T.NewGoldie} once we prove that conditions \eqref{eq:Goldie_condition1} and \eqref{eq:Goldie_condition2} hold.  To this end define
$$R^* = \sum_{i=1}^N C_i R_i + Q,$$
and let $T_i$ be any of $C_i R_i$, $-C_iR_i$ or $|C_i R_i|$, depending on which condition is being verified; respectively, let $T^*$ be the corresponding $R^*$, $-R^*$ or $|R^*|$. Then, 
\begin{align*}
\left| P(T^*>t) - E\left[ \sum_{i=1}^N 1(T_i > t ) \right] \right|  &\leq \left| P(T^* > t) - P\left( \max_{1\leq i \leq N} T_i > t \right) \right|   \\
&\hspace{5mm} + \left| P\left( \max_{1\leq i \leq N} T_i > t  \right)   - E\left[ \sum_{i=1}^N 1(T_i > t ) \right]  \right|.
\end{align*}
To analyze the second absolute value, note that
\begin{align*}
&E\left[ \sum_{i=1}^N 1(T_i > t ) \right]  -  P\left(  \max_{1\leq i \leq N} T_i > t  \right)  \\
&= E\left[ \sum_{i=1}^N 1(T_i > t ) \right]  - E\left[ 1\left(  \max_{1\leq i \leq N} T_i > t  \right) \right] \geq 0.
\end{align*}
Now it follows that
\begin{align}
 \left| P(T^* > t) - E\left[ \sum_{i=1}^N 1(T_i > t ) \right] \right|  &\leq \left| P(T^* > t) - P\left( \max_{1\leq i \leq N} T_i > t \right) \right| \notag \\
&\hspace{5mm} + E\left[ \sum_{i=1}^N 1(T_i > t ) \right] - P\left( \max_{1\leq i \leq N} T_i > t \right)  . \label{eq:Term2}
\end{align}
Note that the integral corresponding to \eqref{eq:Term2} is finite by Lemma~\ref{L.Max_Approx} if we show that the assumptions of Lemma~\ref{L.Max_Approx} are satisfied when $\alpha > 1$. Note that in this case we can choose $\epsilon > 0$ such that $\alpha/(1+\epsilon) \geq 1$ and use the inequality 
\begin{equation} \label{eq:concaveSum}
\sum_{i=1}^k x_i^\beta \le \left( \sum_{i=1}^k x_i \right)^\beta  
\end{equation}
for $\beta \ge 1$, $x_i \geq 0$, $k \leq \infty$ to obtain
$$E\left[ \left( \sum_{i=1}^N |C_i|^{\alpha/(1+\epsilon)} \right)^{1+\epsilon} \right] \leq E\left[ \left( \sum_{i=1}^N |C_i| \right)^\alpha \right]  < \infty.$$
Therefore, it only remains to show that
\begin{align} 
&\int_0^\infty  \left| P(T^* > t) - P\left( \max_{1\leq i \leq N} T_i > t \right) \right| t^{\alpha-1} \, dt \notag \\
&\leq  \int_0^\infty  E\left[ \left| 1(T^* > t) - 1\left( \max_{1\leq i \leq N} T_i > t \right) \right| \right] t^{\alpha-1} \, dt < \infty. \label{eq:RecVsMax}
\end{align}

By Lemma \ref{L.NewIntegralIneq} in Section \ref{SS.ProofsLinearRec},
\begin{align}
\int_0^\infty  E\left[ \left| 1(T^* > t) - 1\left( \max_{1\leq i \leq N} T_i > t \right) \right| \right] t^{\alpha-1} \, dt &\leq \frac{1}{\alpha} E\left[ \left|   \left((T^*)^+\right)^\alpha - \left(\left( \max_{1\leq i\leq N} T_i \right)^+ \right)^\alpha \right|  \right] \notag \\
&\leq \frac{1}{\alpha} E\left[ \left|  ((T^*)^+)^\alpha -  \sum_{i=1}^N (T_i^+)^\alpha \right| \right] \label{eq:MainDifference} \\
&\hspace{5mm} + \frac{1}{\alpha} E\left[  \sum_{i=1}^N (T_i^+)^\alpha -  \left(\left( \max_{1\leq i\leq N} T_i \right)^+ \right)^\alpha  \right]. \label{eq:MaxDifference}
\end{align}
Note that \eqref{eq:MaxDifference} is finite by Lemma \ref{L.Max_Approx}, so it only remains to verifty that \eqref{eq:MainDifference} is finite. To see this let $d(t) = t^+$, $t^-$ or $|t|$ depending on whether $(T^*, T_i)$ is $(R^*, C_iR_i)$, $(-R^*, -C_iR_i)$ or $(|R^*|, |C_iR_i|)$, respectively, and let $S = \sum_{i=1}^N C_i R_i$. Then, the expectation in \eqref{eq:MainDifference} is equal to
\begin{align*}
E\left[ \left| d(S+Q)^\alpha - \sum_{i=1}^N d(C_iR_i)^\alpha \right| \right] &\leq E\left[ \left| d(S+Q)^\alpha - d(S)^\alpha  \right| \right]  + E\left[ \left| d(S)^\alpha - \sum_{i=1}^N d(C_iR_i)^\alpha \right| \right] .
\end{align*}
The first expectation on the right hand side is finite by Lemma~\ref{L.ExtraQ}, while the second one is finite by Lemmas~\ref{L.d_Moments_larger1} and \ref{L.d_Moments_smaller1}.

Finally, applying Theorem \ref{T.NewGoldie} gives the asymptotic expressions for $P(R > t)$ and $P(R < -t)$ with the integral representation of the constants $H_+$, $H_-$ and $H$. 

To obtain the expressions for $H_+$, $H_-$ and $H$ in terms of moments note that
\begin{align}
&\int_{0}^\infty v^{\alpha-1} \left( P( T^* > v) - E\left[ \sum_{j=1}^{N} 1( T_i > v ) \right]    \right) dv \notag \\
&= \int_0^\infty v^{\alpha-1}  E\left[  1(T^* > v) - \sum_{i=1}^N 1(T_i > v)  \right]  \, dv \notag \\ 
&= E \left[   \int_0^\infty v^{\alpha-1}  \left(  1(T^* > v)  -  \sum_{i=1}^N 1(T_i > v) \right) dv  \right] \label{eq:Fubini} \\
&= E \left[   \int_0^{(T^*)^+} v^{\alpha-1} dv  -  \sum_{i=1}^N \int_0^{T_i^+} v^{\alpha-1}  dv  \right] \label{eq:DiffIntegrals} \\
&= \frac{1}{\alpha} E\left[ \left( (T^*)^+ \right)^\alpha - \sum_{i=1}^N (T_i^+)^\alpha   \right] , \notag
\end{align}
where \eqref{eq:Fubini} is justified by Fubini's Theorem and the integrability of
\begin{align*}
v^{\alpha-1} \left| 1(T^* > v) -  \sum_{i=1}^{N} 1(T_i > v )  \right| &\leq v^{\alpha-1} \left| 1(T^* > v) -  1\left( \max_{1\leq i \leq N} T_i > v \right)  \right| \\
&\hspace{5mm} + v^{\alpha-1} \left( \sum_{i=1}^{N} 1(T_i > v ) - 1\left( \max_{1\leq i \leq N} T_i > v \right)  \right),
\end{align*}
which is a consequence of \eqref{eq:RecVsMax} and Lemma \ref{L.Max_Approx}; and \eqref{eq:DiffIntegrals} follows from the observation that 
$$v^{\alpha-1} 1(T^* > v) \qquad \text{and} \qquad v^{\alpha-1} \sum_{i=1}^N 1(T_i > v)$$
are each almost surely absolutely integrable with respect to $v$ as well. This completes the proof.
\end{proof}

\section{Proofs} \label{S.Proofs}

We separate the proofs corresponding to Sections \ref{S.Renewal} and \ref{S.LinearRec} into the following two subsections.

\subsection{Implicit renewal theorem on trees} \label{SS.ProofsRenewal}

This section contains the proofs of Lemma \ref{L.RenewalMeasure} and Theorem \ref{T.NewGoldie_Lattice}.

\begin{proof}[Proof of Lemma \ref{L.RenewalMeasure}]
To see that $\eta_+ + \eta_-$ is a probability measure note that 
\begin{align*}
\int_{-\infty}^{\infty} \eta_\pm(du) &=  \int_{-\infty}^\infty e^{\alpha u}E\left[ \sum_{j=1}^N 1(X_j = \pm 1, \log |C_j| \in du )  \right] \\
&= E\left[ \sum_{j=1}^N \int_{-\infty}^\infty e^{\alpha u} 1(X_j = \pm 1, \log |C_j| \in du )  \right] \qquad \text{(by Fubini's Theorem)} \\
&= E\left[ \sum_{j=1}^N 1(X_j = \pm 1) \int_{-\infty}^\infty e^{\alpha u} 1(\log |C_j| \in du)  \right]  \\
&= E\left[ \sum_{j=1}^N 1(X_j = \pm 1)  |C_j|^\alpha  \right]  
\end{align*}  
We then have that 
$$\int_{-\infty}^\infty \eta_+ (du) + \int_{-\infty}^\infty \eta_- (du) = E\left[ \sum_{j=1}^N |C_j|^\alpha \right] = 1.$$
Similarly, the mean of $\eta_+ + \eta_-$ is given by
$$\int_{-\infty}^\infty u \eta_+(du) + \int_{-\infty}^\infty u \eta_-(du) = E\left[ \sum_{j=1}^N |C_j|^\alpha \log |C_j|  \right] .$$

To show that \eqref{eq:convolution} holds we proceed by induction. 
For ${\bf i} \in A_n$, set $V_{{\bf i}} = \log |\Pi_{{\bf i}|n}|$, and 
let  $\mathcal{F}_{n}$, $n \geq 1$, denote the $\sigma$-algebra generated by $\{ (N_{\bf i}, C_{({\bf i},1)}, \dots, C_{({\bf i}, N_{\bf i})}): {\bf i} \in A_j, 0 \leq j \leq n-1\}$; $\mathcal{F}_0 = \sigma(\emptyset, \Omega)$, $\Pi_{{\bf i}|0} \equiv 1$. 
%
Let $Y_{\bf i} = \sgn(C_{\bf i})$. Hence, using this notation we derive
\begin{align*}
\mu_{n+1}^{(+)}((-\infty,t]) &= \int_{-\infty}^t e^{\alpha u} E\left[  \sum_{{\bf i} \in A_{n+1}} 1(X_{\bf i} = 1, V_{{\bf i}} \in du )   \right] \\
&= \int_{-\infty}^t e^{\alpha u}  E\left[  \sum_{{\bf i} \in A_{n}} \sum_{j=1}^{N_{\bf i}} \left\{ 1(X_{{\bf i}} = 1, Y_{({\bf i},j)} = 1, V_{{\bf i}} + \log |C_{({\bf i}, j)}| \in du )   \right. \right. \\
&\hspace{15mm} \left. \phantom{\sum_{j=1}^{N_i}} + \left. 1(X_{{\bf i}} = -1, Y_{({\bf i},j)} = -1, V_{{\bf i}} + \log |C_{({\bf i}, j)}| \in du )  \right\}  \right] \\
&= \int_{-\infty}^t e^{\alpha u}  E\left[  \sum_{{\bf i} \in A_{n}} \left\{ 1(X_{\bf i} = 1) \sum_{j=1}^{N_{\bf i}}  \right.    1(Y_{({\bf i},j)} = 1, V_{\bf i} + \log |C_{({\bf i}, j)}| \in du)  \right. \\
&\hspace{15mm} + \left. \left. 1( X_{\bf i} = -1 )  \sum_{j=1}^{N_{\bf i}}1(Y_{({\bf i},j)} = -1, V_{\bf i} + \log |C_{({\bf i}, j)}| \in du )   \right\}  \right] \\
&= \int_{-\infty}^t e^{\alpha u}  E\left[  \sum_{{\bf i} \in A_{n}} \left\{ 1(X_{\bf i} = 1)  E\left[ \left. \sum_{j=1}^{N_{\bf i}}  \right.    1(Y_{({\bf i},j)} = 1, V_{\bf i} + \log |C_{({\bf i}, j)}| \in du) \right| \mathcal{F}_n \right] \right. \\
&\hspace{15mm} + \left. \left.  \left.  1( X_{\bf i} = -1 )  E\left[ \sum_{j=1}^{N_{\bf i}}1(Y_{({\bf i},j)} = -1, V_{\bf i} + \log |C_{({\bf i}, j)}| \in du )   \right| \mathcal{F}_n \right]  \right\} \right]. 
\end{align*}
Using the independence of $(N_{{\bf i}}, C_{({\bf i},j)}, \dots, C_{({\bf i},j)})$ and $\mathcal{F}_n$ we obtain
$$E\left[ \left. \sum_{j=1}^{N_{\bf i}}      1(Y_{({\bf i},j)} = \pm 1, V_{\bf i} + \log |C_{({\bf i}, j)}| \in du) \right| \mathcal{F}_n \right]  = e^{-\alpha (u- V_{\bf i})} \eta_\pm (du - V_{\bf i}),$$
from where it follows that
\begin{align*}
\mu_{n+1}^{(+)}((-\infty, t]) &= \int_{-\infty}^t  E\left[ \sum_{{\bf i} \in A_n} \left\{ 1(X_{\bf i} = 1) e^{\alpha V_{\bf i} } \eta_+ (du - V_{\bf i}) + 1(X_{\bf i} = -1) e^{\alpha V_{\bf i}} \eta_- (du - V_{\bf i}) \right\} \right] \\
&=   E\left[ \sum_{{\bf i} \in A_n}  1(X_{\bf i} = 1) e^{\alpha V_{\bf i} }  \eta_+ ((-\infty, t - V_{\bf i}]) \right] + E\left[ \sum_{{\bf i} \in A_n} 1(X_{\bf i} = -1) e^{\alpha V_{\bf i}}  \eta_- ((-\infty, t - V_{\bf i}])  \right]  \\
&= \int_{-\infty}^\infty \eta_+ ((-\infty, t-v]) \mu_n^{(+)}(dv) + \int_{-\infty}^\infty \eta_- ((-\infty, t-v]) \mu_n^{(-)}(dv),
\end{align*}
and hence $\mu_{n+1}^{(+)}(dt) = (\eta_+*\mu_n^{(+)})(dt) +  (\eta_-*\mu_n^{(-)})(dt)$. The same arguments also give
$$\mu_{n+1}^{(-)}(dt) = (\eta_-*\mu_n^{(+)})(dt) +  (\eta_+*\mu_n^{(-)})(dt).$$
In matrix notation the last two equations can be written as
$$\left(  \mu_{n+1}^{(+)} , \, \mu_{n+1}^{(-)} \right) = (\mu_n^{(+)}, \, \mu_n^{(-)} ) * \left( \begin{matrix} 
\eta_+ & \eta_- \\ \eta_- & \eta_+ \end{matrix} \right) ,$$
and now the induction hypothesis gives the result.
\end{proof}

\bigskip

Before going into the proof of Theorem \ref{T.NewGoldie_Lattice} we need the following lattice analogue of the monotone density lemma.

\begin{lem} \label{L.DerivativeDiscrete}
Let $\alpha, \beta > 0$ and fix $t \in \mathbb{R}$. Suppose that $\int_{-\infty}^{t+\lambda n} e^{(\alpha + \beta) u } P(R > e^u) du \sim G(t) e^{\beta(t+\lambda n)}/\beta$ as $n \to \infty$, with $0 \leq G(t) < \infty$. If $H(t) = \lim_{h \to 0} (e^{\beta h}G(t+h)- G(t))/(\beta h)$ exists, then
$$P(R > e^{t+\lambda n} ) \sim H(t) e^{-\alpha(t+\lambda n)} , \qquad n \to \infty.$$
\end{lem}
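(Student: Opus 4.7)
\textbf{Proof plan for Lemma~\ref{L.DerivativeDiscrete}.} Set $Q_n = e^{\alpha(t+\lambda n)} P(R > e^{t+\lambda n})$; the goal is to show $Q_n \to H(t)$. I would follow the same template as the continuous monotone density lemma (Lemma~\ref{L.Derivative}), replacing differentiation by a finite-difference step evaluated at $t$ and $t+h$, and then squeezing via the monotonicity of $u \mapsto P(R > e^u)$.

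For fixed $h \ne 0$ small enough that the hypothesis is available at both $t$ and $t+h$, set
$$
J_n(h) \;=\; \int_{t+\lambda n}^{t+h+\lambda n} e^{(\alpha+\beta)u}\, P(R > e^u)\, du.
$$
Subtracting the asymptotic at $t$ from that at $t+h$ yields
$$
J_n(h) \;\sim\; \frac{e^{\beta h}\,G(t+h) - G(t)}{\beta}\; e^{\beta(t+\lambda n)}, \qquad n \to \infty.
$$
On the other hand, since $u \mapsto P(R > e^u)$ is nonincreasing, for $h > 0$ it is sandwiched on $[t+\lambda n,\,t+h+\lambda n]$ between $P(R > e^{t+h+\lambda n})$ and $P(R > e^{t+\lambda n})$, so factoring these out and using the explicit value $\int_{t+\lambda n}^{t+h+\lambda n} e^{(\alpha+\beta)u}\,du = e^{(\alpha+\beta)(t+\lambda n)}(e^{(\alpha+\beta)h}-1)/(\alpha+\beta)$ gives, after dividing by $e^{\beta(t+\lambda n)}$ and sending $n\to\infty$, the bound
$$
\liminf_{n\to\infty} Q_n \;\geq\; \frac{\alpha+\beta}{\beta}\cdot\frac{e^{\beta h}\,G(t+h) - G(t)}{e^{(\alpha+\beta)h}-1}.
$$
An entirely symmetric computation for $h < 0$ (the integration interval becomes $[t+h+\lambda n,\,t+\lambda n]$ and the monotonicity inequalities reverse) produces the matching upper bound with $\limsup_{n}Q_n$ on the left and the same expression on the right.

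Finally I would let $h \to 0^+$, respectively $h \to 0^-$. Using the elementary limit $(e^{(\alpha+\beta)h}-1)/((\alpha+\beta) h) \to 1$ together with the defining limit $(e^{\beta h}G(t+h)-G(t))/(\beta h) \to H(t)$, both one-sided bounds collapse to $H(t)$, giving $\liminf Q_n \geq H(t) \geq \limsup Q_n$ and hence $Q_n \to H(t)$. The main subtlety I anticipate is purely bookkeeping: the lemma as written fixes a single $t$, but the differencing step requires the integral asymptotic also at $t+h$ for small $h$. This is tacit in the assumption that the one-sided difference quotient defining $H(t)$ exists, and in the intended application within Theorem~\ref{T.NewGoldie_Lattice} it is supplied at Lebesgue-almost every $t$ by the underlying matrix renewal identity, which is exactly what matches the ``a.e.\ $t$'' qualifier in that theorem's conclusion.
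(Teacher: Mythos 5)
Your proposal is correct and follows essentially the same route as the paper's proof: the same finite-difference integral $\int_{t+\lambda n}^{t+h+\lambda n}e^{(\alpha+\beta)u}P(R>e^u)\,du$, the same monotonicity sandwich, and the same passage $h\to 0^{\pm}$ using the difference quotient defining $H(t)$ (the paper just phrases the asymptotics of the difference with an explicit $\epsilon$-slack rather than a ``$\sim$'' statement, which you should also do in case $e^{\beta h}G(t+h)-G(t)=0$). Your remark about tacitly needing the hypothesis at $t+h$ matches what the paper does implicitly.
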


\begin{proof}
Fix $0 < \delta, \epsilon < \min\{\eta, 1\}$. By assumption, for any $b > 1$, $\epsilon \in (0,1)$, and $n$ sufficiently large,
\begin{align*}
P(R > e^{t+\lambda n}) e^{(\alpha+\beta)(t+\lambda n)} \cdot \frac{(e^{(\alpha+\beta) \delta} - 1)}{\alpha+\beta}  &\geq \int_{t+\lambda n}^{t+\delta +\lambda n} e^{(\alpha+\beta)u} P(R > e^u) du  \\
&\geq \frac{(G(t+\delta)-\epsilon)}{\beta} e^{\beta(t+\delta +\lambda n)} - \frac{(G(t)+\epsilon)}{\beta} e^{\beta(t+\lambda n)} \\
&= \frac{e^{\beta(t+\lambda n)}}{\beta} \left( (G(t+\delta)-\epsilon) e^{\beta \delta} - G(t) - \epsilon  \right).
\end{align*}
Since $\epsilon$ was arbitrary, we can take the limit as $\epsilon \to 0$ to obtain
$$\liminf_{n \to \infty} P(R > e^{t+\lambda n}) e^{\alpha(t+\lambda n)} \geq \frac{\alpha+\beta}{e^{(\alpha+\beta)\delta} - 1} \cdot \frac{e^{\beta \delta} G(t+\delta) - G(t)}{\beta}.$$
Now take the limit as $\delta \downarrow 0$  to obtain
\begin{align*}
\lim_{\delta \downarrow 0} \frac{\alpha+\beta}{e^{(\alpha+\beta)\delta} - 1} \cdot \frac{e^{\beta \delta} G(t+\delta) - G(t)}{\beta} &= \lim_{\delta \downarrow 0} \frac{(\alpha+\beta)\delta}{e^{(\alpha+\beta)\delta} - 1} \cdot \lim_{\delta \downarrow 0} \frac{e^{\beta \delta} G(t+\delta) - G(t)}{\beta\delta} = H(t).  
\end{align*}
Similarly, one can prove that $\limsup_{t \to \infty} P(R > e^{t+\lambda n}) e^{\alpha(t+\lambda n)}  \leq H(t)$ by starting with the integral $\int_{t-\delta +\lambda n}^{t+\lambda n} e^{(\alpha+\beta)u} P\left(R > e^u\right) du$. 
%
\end{proof}

Now we proceed with the proof of Theorem \ref{T.NewGoldie_Lattice}. 

\begin{proof}[Proof of Theorem \ref{T.NewGoldie_Lattice}]
Define $\eta_+$, $\eta_-$ and ${\bf H}$ as in Lemma \ref{L.RenewalMeasure}. We first note that by assumption, 
\begin{align*}
\eta_+(dt) &= e^{\alpha t} E\left[ \sum_{i=1}^N 1( \sgn(C_i) = 1, \log |C_i| \in dt ) \right] \quad \text{and} \\
\eta_-(dt) &= e^{\alpha t} E\left[ \sum_{i=1}^N 1( \sgn(C_i) = -1, \log |C_i| \in dt ) \right]
\end{align*}
are both lattice measures on the lattice $L$. Then, according to Definition 5 in \cite{Sgi_06} (with $\alpha_1 = \alpha_2 = 0$), the matrix ${\bf H}$ is lattice with span $\lambda$. 

The proof of the theorem is identical to that of Theorem \ref{T.NewGoldie} up to the point where the matrix analogue of the Key Renewal Theorem on the real line, Theorem 4 in \cite{Sgi_06}, is used. 

{\bf Case a):} $C_i \geq 0$ for all $i$.

Applying Theorem 4 in \cite{Sgi_06} we obtain that for any $t \in \mathbb{R}$,
$$\lim_{n\to \infty} e^{-\beta (t+\lambda n)} \int_{-\infty}^{t+\lambda n} e^{(\alpha+\beta)u} P(R > e^u) du = \lim_{n \to \infty} \breve{r}(t+\lambda n) = \frac{\lambda}{\mu} \sum_{k=-\infty}^\infty \breve{g}_+(t+ k\lambda)\triangleq \frac{G_+(t)}{\beta}$$
and
$$\lim_{n \to \infty} e^{-\beta(t+\lambda n)} \int_{-\infty}^{t+\lambda n} e^{(\alpha+\beta)u} P(R < -e^u) dv = \frac{\lambda}{\mu} \sum_{k = -\infty}^\infty \breve{g}_- (t+k\lambda) \triangleq \frac{G_-(t)}{\beta}.$$
We now verify that the limit $\lim_{\delta \to 0} (e^{\beta \delta}G_\pm(t+\delta) - G_\pm(t))/\delta$ exists. To do this first define the function $H_\pm(t) \triangleq \frac{\lambda}{\mu} \sum_{k=-\infty}^{\infty} g_\pm(t+k\lambda)$ and fix $0 < \delta < \lambda$. Then,
\begin{align*}
\frac{e^{\beta \delta} G_\pm (t+\delta) - G_\pm(t)}{\beta\delta} &= \frac{\lambda}{\delta\mu} \sum_{k=-\infty}^\infty \left(e^{\beta \delta} \breve{g}_\pm(t+\delta + k\lambda) - \breve{g}_\pm(t+k\lambda) \right)  \\
&= \frac{\lambda}{\delta\mu} \sum_{k=-\infty}^\infty  \int_{t+k\lambda}^{t+\delta +k\lambda} e^{-\beta(t+k\lambda-u)} g_\pm(u) du     \\
&=\frac{\lambda}{\delta\mu} \sum_{k=-\infty}^\infty  \int_{0}^{\delta} e^{\beta v} g_\pm(v+t+k\lambda) dv   \\
&= \frac{1}{\delta} \int_0^\delta e^{\beta v} H_\pm(v+t) dv \\
&= \frac{e^{-\beta t}}{\delta} \int_t^{t+\delta} e^{\beta u} H_\pm(u) du , 
\end{align*}
where the rearrangement of summands in the first equality is justified by the absolute summability of the expressions, and the exchange of the integral and sum in the fourth equality is justified by Fubini's theorem and the observation that by \eqref{eq:Goldie_condition1_Discrete} and \eqref{eq:Goldie_condition2_Discrete}
$$\sum_{k=-\infty}^\infty \int_0^\delta e^{\beta v} |g_\pm(v+t+k\lambda)| dv \leq e^{\beta\lambda} \sum_{k=-\infty}^\infty \int_0^\lambda   |g_\pm(v+t+k\lambda)| dv = e^{\beta \lambda} \int_{-\infty}^{\infty}    |g_\pm(u)| du< \infty.$$ 
Similarly, 
$$\frac{e^{-\beta \delta} G_\pm (t-\delta) - G_\pm(t)}{-\beta\delta} =  \frac{e^{-\beta t}}{\delta} \int_{t-\delta}^t e^{\beta u} H_\pm(u) du.$$ 
Taking the limit as $\delta \to 0$ and using the Lebesgue differentiation theorem gives
$$\lim_{h \to 0} \frac{e^{\beta h} G_\pm (t+h) - G_\pm(t)}{\beta h} = H_\pm(t)$$ 
for almost every $t \in \mathbb{R}$. 

Next, by using Lemma \ref{L.DerivativeDiscrete} we obtain 
$$P(R > e^{t+\lambda n}) \sim H_+(t) e^{-\alpha(t+\lambda n)}, \qquad n \to \infty,$$
and
$$P(R < -e^{t+\lambda n}) \sim H_-(t) e^{-\alpha(t+\lambda n)}, \qquad n \to \infty.$$


{\bf Case b):} $P(C_j < 0) > 0$ for some $j \geq 1$. 

Applying Theorem 4 in \cite{Sgi_06} we obtain that for any $t \in \mathbb{R}$,
$$\lim_{n\to \infty} e^{-\beta (t+\lambda n)} \int_0^{e^{t+\lambda n}} v^{\alpha+\beta-1} P(R > v) dv = \lim_{n \to \infty} \breve{r}(t+\lambda n) = \frac{\lambda}{2\mu} \sum_{k=-\infty}^\infty (\breve{g}_+(t + k\lambda) + \breve{g}_-(t+k\lambda) ) \triangleq \frac{G(t)}{\beta}.$$
and
$$\lim_{n\to \infty} e^{-\beta (t+\lambda n)} \int_0^{e^{t+\lambda n}} v^{\alpha+\beta-1} P(R > v) dv = \frac{\lambda}{2\mu} \sum_{k=-\infty}^\infty (\breve{g}_+(t + k\lambda) + \breve{g}_-(t+k\lambda) ) \triangleq \frac{G(t)}{\beta},$$
where $G(t) = (G_+(t) + G_-(t))/2$. By using Lemma \ref{L.DerivativeDiscrete} we obtain (for almost every $t \in \mathbb{R}$)
$$P(R > e^{t+\lambda n}) \sim H(t) e^{-\alpha(t+\lambda n)}, \qquad n \to \infty,$$
where $H(t) = (H_+(t)+H_-(t))/2$.  
\end{proof}

\subsection{The linear recursion: $R = \sum_{i=1}^N C_i R_i + Q$} \label{SS.ProofsLinearRec}

In this section we give the proofs of Lemmas \ref{L.d_Moments_larger1}$-$\ref{L.ExtraQ}. We also state and prove an analogue of Lemma 4.1 in \cite{Jel_Olv_11} for the positive parts of general random variables, which will be used in the proofs of the lemmas mentioned above, and a version of Lemma 9.4 in \cite{Goldie_91} needed in the proof of Theorem \ref{T.LinearRecursion}.

\begin{lem} \label{L.Alpha_Moments}
For any $k \in \mathbb{N} \cup \{\infty\}$ let $\{D_i \}^k_{i=1}$ be a sequence of real valued random variables and let $\{Y_i\}_{i = 1}^k$ be a sequence of real valued iid random variables having the same distribution as $Y$, independent of the $\{D_i\}$. For $\beta > 1$ set $p = \lceil \beta \rceil \in \{2, 3, 4, \dots\}$, and if $k = \infty$ assume that $ \sum_{i=1}^\infty |D_i Y_i| < \infty$ a.s. Then,
$$E\left[ \left( \sum_{i=1}^k (D_i Y_i)^+ \right)^\beta - \sum_{i=1}^k ((D_iY_i)^+)^\beta \right] \leq E\left[ |Y|^{p-1} \right]^{\beta/(p-1)} E\left[ \left(\sum_{i=1}^k |D_i| \right)^\beta  \right].$$
\end{lem}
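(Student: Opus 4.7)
The plan is to prove the inequality by a two-step reduction: first, reduce to the case of nonnegative random variables via a coordinatewise monotonicity argument on the function $(a_1,\ldots,a_k)\mapsto(\sum_i a_i)^\beta - \sum_i a_i^\beta$; second, apply the already-established nonnegative version, Lemma~4.1 in \cite{Jel_Olv_11}.

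The first step rests on the observation that for $\beta>1$ the map $f:[0,\infty)^k\to[0,\infty)$ defined by $f(a_1,\ldots,a_k)=(\sum_{i=1}^k a_i)^\beta-\sum_{i=1}^k a_i^\beta$ is coordinatewise nondecreasing, since its partial derivatives satisfy
\begin{equation*}
\frac{\partial f}{\partial a_j}(a)=\beta\Bigl[\Bigl(\sum_{i=1}^k a_i\Bigr)^{\beta-1}-a_j^{\beta-1}\Bigr]\geq 0,
\end{equation*}
using $\sum_i a_i\ge a_j$ together with monotonicity of $x\mapsto x^{\beta-1}$ on $[0,\infty)$. Because $(D_iY_i)^+\leq|D_iY_i|=|D_i||Y_i|$ pointwise, this monotonicity applied componentwise (and, when $k=\infty$, applied to finite truncations and then passed to the limit via monotone convergence, justified by the hypothesis $\sum_i|D_iY_i|<\infty$ a.s.) yields the pointwise bound
\begin{equation*}
\Bigl(\sum_i(D_iY_i)^+\Bigr)^\beta-\sum_i((D_iY_i)^+)^\beta\;\leq\;\Bigl(\sum_i|D_i||Y_i|\Bigr)^\beta-\sum_i(|D_i||Y_i|)^\beta \quad\text{a.s.}
\end{equation*}

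For the second step I would take expectations and invoke Lemma~4.1 of \cite{Jel_Olv_11} with the nonnegative iid sequence $\{|Y_i|\}$ (having distribution $|Y|$) and the nonnegative weights $\{|D_i|\}$, which are independent of $\{|Y_i|\}$ by hypothesis; this yields precisely
\begin{equation*}
E\Bigl[\Bigl(\sum_i|D_i||Y_i|\Bigr)^\beta-\sum_i(|D_i||Y_i|)^\beta\Bigr]\;\leq\;E\bigl[|Y|^{p-1}\bigr]^{\beta/(p-1)}E\Bigl[\Bigl(\sum_i|D_i|\Bigr)^\beta\Bigr],
\end{equation*}
which, combined with the first step, is the stated bound. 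The substantive analytic work is absorbed into the cited lemma, which handles noninteger $\beta\in(p-1,p]$ by combining a multinomial expansion for integer $\beta=p$ with Lyapunov's inequality $E[|Y|^k]\leq E[|Y|^{p-1}]^{k/(p-1)}$ valid for $k\leq p-1$, together with an interpolation. The only genuinely new ingredient here is the monotonicity reduction, and the one point requiring care is the $k=\infty$ case, where the absolute summability assumption is used to justify truncation plus monotone convergence; verifying the sign bookkeeping in the passage from $(D_iY_i)^+$ to $|D_i||Y_i|$ is otherwise routine.
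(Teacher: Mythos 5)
Your proof is correct, but it follows a genuinely different route from the paper's. You reduce to the nonnegative case by a pointwise monotonicity argument: since $\partial f/\partial a_j = \beta\bigl[(\sum_i a_i)^{\beta-1}-a_j^{\beta-1}\bigr]\ge 0$, the map $f(a_1,\dots,a_k)=(\sum_i a_i)^\beta-\sum_i a_i^\beta$ is coordinatewise nondecreasing on $[0,\infty)^k$, so replacing $(D_iY_i)^+$ by the larger $|D_i||Y_i|$ can only increase the (nonnegative) difference, and then Lemma~4.1 of \cite{Jel_Olv_11} applied to $\{|D_i|\}$ and $\{|Y_i|\}$ finishes the job; your truncation-and-limit handling of $k=\infty$ is fine because the summability hypothesis makes both differences finite a.s. limits of the finite-$k$ differences (which increase in $k$ by superadditivity of $x^\beta$). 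The paper instead handles the sign issue \emph{inside} the expansion: it writes $(\sum y_i)^\beta=(\sum y_i)^{p\gamma}$ with $\gamma=\beta/p\le 1$, expands the inner $p$th power multinomially, uses subadditivity of $x^\gamma$ to split off the diagonal terms, and then applies the \emph{conditional Jensen inequality} (given $D_1,\dots,D_k$) to replace the off-diagonal products of positive parts by products of absolute values, after which it follows the Lyapunov-inequality steps of Lemma~4.1 of \cite{Jel_Olv_11}. So both arguments ultimately lean on the same cited lemma, but yours uses it as a black box after a clean monotone domination, which is more modular and arguably simpler, while the paper's version reproduces the expansion so that the passage from positive parts to absolute values is made explicit within the same calculation. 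The one thing your write-up depends on, and which you should state, is that the cited nonnegative-case lemma has exactly the claimed form (same constant $E[|Y|^{p-1}]^{\beta/(p-1)}$, with the a.s.\ summability assumption when $k=\infty$), which indeed it does.
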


{\sc Remark:} Note that the preceding lemma does not exclude the case when $E \left[ \left( \sum_{i=1}^k \left(D_i Y_i \right)^+ \right)^\beta  \right] = \infty$ but \linebreak $E\left[ \left( \sum_{i=1}^k  \left(D_i Y_i  \right)^+\right)^\beta - \sum_{i=1}^k ((D_iY_i)^+)^\beta \right] < \infty$.

\begin{proof}[Proof of Lemma \ref{L.Alpha_Moments}]
Let $p = \lceil \beta \rceil \in \{2,3,\dots\}$ and $\gamma = \beta/p \in (\beta/(\beta+1), 1]$. Suppose first that $k \in \mathbb{N}$ and define  $A_p(k) = \{ (j_1, \dots, j_k) \in \mathbb{N}^k: j_1 + \dots + j_k = p, 0 \leq j_i < p\}$. Then, for any sequence of nonnegative numbers $\{ y_i \}_{i \geq 1}$ we have
\begin{align}
\left( \sum_{i=1}^k y_i \right)^\beta &= \left( \sum_{i=1}^k y_i \right)^{p \gamma} \notag \\
&= \left( \sum_{i=1}^k y_i^p + \sum_{(j_1,\dots,j_k) \in A_p(k)} \binom{p}{j_1,\dots,j_k} y_1^{j_1} \cdots y_k^{j_k} \right)^\gamma \notag  \\
&\leq \sum_{i=1}^k y_i^{p\gamma} + \left( \sum_{(j_1,\dots,j_k) \in A_p(k)} \binom{p}{j_1,\dots,j_k} y_1^{j_1} \cdots y_k^{j_k} \right)^\gamma, \label{eq:scalarBound}
\end{align}
where for the last step we used the well known inequality $\left( \sum_{i=1}^k x_i \right)^\gamma \leq \sum_{i=1}^k x_i^\gamma$ for $0 < \gamma \leq 1$ and $x_i \geq 0$. We now use the conditional Jensen's inequality to obtain 
\begin{align*}
&E\left[ \left( \sum_{i=1}^k (D_i Y_i)^+ \right)^\beta - \sum_{i=1}^k ((D_iY_i)^+)^{\beta} \right] \\
&\leq E\left[  \left( \sum_{(j_1,\dots,j_k) \in A_p(k)} \binom{p}{j_1,\dots,j_k} ((D_1Y_1)^+)^{j_1} \cdots ((D_k Y_k)^+)^{j_k} \right)^\gamma \right] \qquad \text{(by \eqref{eq:scalarBound})} \\
&\leq    E\left[ \left(  E\left[ \left. \sum_{(j_1,\dots,j_k) \in A_p(k)} \binom{p}{j_1,\dots,j_k} |D_1Y_1|^{j_1} \cdots |D_k Y_k|^{j_k} \right| D_1,\dots, D_k \right] \right)^\gamma \right]  \\
&= E \left[ \left(  \sum_{(j_1,\dots,j_k) \in A_p(k)} \binom{p}{j_1,\dots,j_k} |D_1|^{j_1} \cdots |D_k|^{j_k} E\left[ \left. |Y_1|^{j_1} \cdots |Y_k|^{j_k} \right| D_1,\dots, D_k \right] \right)^\gamma  \right].
\end{align*}
The rest of the proof is essentially the same as that of Lemma 4.1 in \cite{Jel_Olv_11}, and is therefore omitted.
\end{proof}

\bigskip

\begin{proof}[Proof of Lemma \ref{L.d_Moments_larger1}]
Suppose first that $d(t) = t^+$ and let $S_+ = \sum_{i=1}^N (C_i R_i)^+$, $S_- = \sum_{i=1}^N (C_i R_i)^-$, and $S = S_+ - S_-$, then
\begin{align}
&E\left[ \left| \left( \left( \sum_{i=1}^N C_i R_i\right)^+ \right)^\beta - \sum_{i=1}^N ((C_iR_i)^+)^\beta \right| \right] \notag \\
&\leq E\left[  \sum_{i=1}^N ((C_iR_i)^+)^\beta \Indicator(S_+ \leq S_-) \right] +  E\left[ \left| ( S_+ - S_-)^\beta - S_+^\beta \right| \Indicator(S_+ > S_-) \right] \label{eq:PosSum} \\
&\hspace{5mm} + E\left[ \left| S_+^\beta - \sum_{i=1}^N ((C_iR_i)^+)^\beta \right|\right] .\label{eq:PosPartsLemma}
\end{align}

Note that \eqref{eq:PosPartsLemma} is finite by Lemma \ref{L.Alpha_Moments}.  The first expectation in \eqref{eq:PosSum} can be bounded as follows
\begin{align}
E\left[  \sum_{i=1}^N ((C_iR_i)^+)^\beta \Indicator(S_+ \leq S_-) \right] &= E\left[ \sum_{i=1}^N E\left[ \left.  ((C_iR_i)^+)^\beta \Indicator(S_+ \leq S_-) \right| N, C_1, \dots, C_N \right] \right] \notag \\
&= E\left[ \sum_{i=1}^N E\left[ \left.  (C_iR_i)^\beta \Indicator\left(0 < C_i R_i \leq -S + C_i R_i \right) \right| N, C_1, \dots, C_N \right] \right]. \label{eq:BetaIndicator} 
\end{align}
When $1 < \beta \leq 2$, we have that \eqref{eq:BetaIndicator} is bounded by
\begin{align}
&E\left[ \sum_{i=1}^N E\left[ \left.  |C_iR_i| |S-C_iR_i|^{\beta-1} \right| N, C_1, \dots, C_N \right] \right] \notag \\
&= E\left[ |R| \right] E\left[ \sum_{i=1}^N |C_i|  E\left[ \left. |S-C_iR_i|^{\beta-1} \right| N, C_1, \dots, C_N \right] \right] \label{eq:IndepProd1} \\
&\leq E\left[ |R| \right] E\left[ \sum_{i=1}^N |C_i|  \left( E\left[ \left. |S-C_iR_i| \right| N, C_1, \dots, C_N \right] \right)^{\beta-1} \right] \label{eq:JensenConcave} \\
&\leq E\left[ |R| \right]^\beta E\left[ \sum_{i=1}^N |C_i|  \left(   \sum_{j=1}^N |C_j|  \right)^{\beta-1} \right] \notag \\
&= E\left[ |R| \right]^\beta E\left[  \left(   \sum_{j=1}^N |C_j|  \right)^{\beta} \right] < \infty, \notag
\end{align}
where in \eqref{eq:IndepProd1} we used the conditional independence of $C_iR_i$ and $S - C_iR_i$ and in \eqref{eq:JensenConcave} we used Jensen's inequality. Now, when $\beta > 2$ \eqref{eq:BetaIndicator} is bounded by
\begin{align}
&E\left[ \sum_{i=1}^N E\left[ \left.  |C_iR_i|^{\beta-1} |S-C_iR_i| \right| N, C_1, \dots, C_N \right] \right] \notag \\
&= E\left[ |R|^{\beta-1}  \right] E\left[ \sum_{i=1}^N |C_i|^{\beta-1}  E\left[ \left. |S-C_iR_i| \right| N, C_1, \dots, C_N \right] \right] \label{eq:IndepProd2} \\
&\leq E\left[ |R|^{\beta-1}  \right] E[|R|] E\left[ \sum_{i=1}^N |C_i|^{\beta-1}    \sum_{j=1}^N |C_j| \right] \notag \\
&\leq E\left[ |R|^{\beta-1}  \right] E[|R|] E\left[ \left( \sum_{i=1}^N |C_i| \right)^{\beta-1}    \sum_{j=1}^N |C_j| \right] < \infty, \notag
\end{align}
where in \eqref{eq:IndepProd2} we used the conditional independence of $C_iR_i$ and $S - C_i R_i$. 

For the second expectation in \eqref{eq:PosSum} we use the elementary inequality
$$|x^\beta - y^\beta| \leq \beta (x \vee y)^{\beta-1} |x-y|$$
for any $x,y \geq 0$ to obtain that
\begin{align}
&E\left[ \left| ( S_+ - S_-)^\beta - S_+^\beta \right| \Indicator(S_+ > S_-) \right] \\
&\leq \beta E\left[ S_+^{\beta-1} S_- \right] \notag \\
&= \beta E\left[ \sum_{i=1}^N E\left[ \left. S_+^{\beta-1} (C_iR_i)^-  \right| N, C_1, \dots, C_N \right] \right] \notag \\
&= \beta E\left[ \sum_{i=1}^N E\left[ \left. \left( S_+ - (C_i R_i)^+ \right)^{\beta-1} (C_i R_i)^-  \right| N, C_1, \dots, C_N \right] \right] \notag \\
&= \beta E\left[ \sum_{i=1}^N E\left[ \left. \left( S_+ - (C_i R_i)^+ \right)^{\beta-1} \right| N, C_1,\dots, C_N \right] E\left[ \left. (C_i R_i)^-  \right| N, C_1, \dots, C_N \right] \right] \notag \\
&\leq \beta E[|R|] E\left[ \sum_{i=1}^N |C_i| E\left[ \left.  S_+^{\beta-1} \right| N, C_1,\dots, C_N \right]  \right], \label{eq:MixedExp}
\end{align} 
where in the last equality we used the conditional independence of $(S_+-(C_iR_i)^+)^{\beta-1}$ and $(C_iR_i)^-$. To see that \eqref{eq:MixedExp} is finite note that if $1 < \beta \leq 2$, Jensen's inequality gives 
\begin{align*}
E\left[ \sum_{i=1}^N |C_i| E\left[ \left.  S_+^{\beta-1} \right| N, C_1,\dots, C_N \right]  \right] &\leq E\left[ \sum_{i=1}^N |C_i| \left( E\left[ \left.  S_+ \right| N, C_1,\dots, C_N \right] \right)^{\beta-1} \right] \\
&\leq E[|R|]^{\beta-1} E\left[ \sum_{i=1}^N |C_i|  \left( \sum_{j=1}^N |C_j| \right)^{\beta-1}   \right] < \infty.
\end{align*}
And if $\beta > 2$, we use Lemma \ref{L.Alpha_Moments} to obtain, for $p = \lceil \beta-1 \rceil$, 
\begin{align*}
E\left[ \left.  S_+^{\beta-1} \right| N, C_1,\dots, C_N \right] &\leq E\left[ \left.  \sum_{j=1}^N ((C_jR_j)^+)^{\beta-1} \right| N, C_1,\dots, C_N \right]  + E\left[|R|^{p-1}\right]^{(\beta-1)/(p-1)} \left( \sum_{j=1}^N |C_j| \right)^{\beta-1} \\
&\leq E\left[|R|^{\beta-1}\right] \sum_{j=1}^N |C_j|^{\beta-1} + E\left[|R|^{p-1}\right]^{(\beta-1)/(p-1)} \left( \sum_{j=1}^N |C_j| \right)^{\beta-1} \\
&\leq \left( ||R||_{\beta-1}^{\beta-1} + || R ||_{p-1}^{\beta-1} \right) \left( \sum_{j=1}^N |C_j| \right)^{\beta-1},
\end{align*}
where $|| \cdot ||_r = \left( E\left[ | \cdot |^r \right] \right)^{1/r}$. Next, using the monotonicity of $|| \cdot ||_r$ it follows that

$$E\left[ \sum_{i=1}^N |C_i| E\left[ \left.  S_+^{\beta-1} \right| N, C_1,\dots, C_N \right]  \right]  \leq 2 E\left[ |R|^{\beta-1} \right]  E\left[ \sum_{i=1}^N |C_i| \left( \sum_{j=1}^N |C_j| \right)^{\beta-1} \right] < \infty.$$
This completes the proof for $d(t) = t^+$.  To obtain the same result for $d(t) = t^-$ simply note that
$$E\left[ \left| \left( \left( \sum_{i=1}^N C_i R_i \right)^- \right)^\beta - \sum_{i=1}^N ((C_i R_i)^-)^\beta \right| \right] = E\left[ \left| \left( \left( \sum_{i=1}^N (- C_i R_i) \right)^+ \right)^\beta - \sum_{i=1}^N ((-C_i R_i)^+)^\beta \right| \right] $$
and apply the result for $d(t) = t^+$.  

Finally, for $d(t) = |t|$, we use the fact that $|x|^\beta = (x^+)^\beta + (x^-)^\beta$ for any $x \in \mathbb{R}$ to obtain
\begin{align*}
E\left[ \left| \left| \sum_{i=1}^N C_i R_i \right|^\beta - \sum_{i=1}^N |C_i R_i|^\beta \right| \right] 
&= E\left[ \left| (S^+)^\beta + (S^-)^\beta - \sum_{i=1}^N \left( ((C_i R_i)^+)^\beta + ((C_iR_i)^-)^\beta \right) \right| \right] 
\end{align*}
which is finite by the previous cases $d(t)= t^+$ and $d(t) = t^-$. 
\end{proof}

\bigskip

\begin{proof}[Proof of Lemma \ref{L.d_Moments_smaller1}]
From the proof of Lemma \ref{L.d_Moments_larger1} we see that it is enough to prove the result for $d(t) = t^+$. Let $S_+ = \sum_{i=1}^N (C_iR_i)^+$, $S_- = \sum_{i=1}^N (C_i R_i)^-$ and $S = S_+ - S_-$. Since $0 < \beta \leq 1$, we have
$$\left( \left( \sum_{i=1}^k y_i \right)^+ \right)^\beta \leq \left( \sum_{i=1}^k (y_i)^+ \right)^\beta \leq \sum_{i=1}^k ((y_i)^+)^\beta$$
for any real numbers $\{y_i\}$ and any $k \in \mathbb{N} \cup \{ \infty \}$. Hence,
\begin{align}
0 &\leq E\left[ \sum_{i=1}^N ((C_iR_i)^+)^\beta - \left(\left( \sum_{i=1}^N C_i R_i \right)^+ \right)^\beta \right] \notag \\
&= E\left[ \sum_{i=1}^N ((C_iR_i)^+)^\beta \Indicator(S_+ \leq S_-) \right] +  E\left[ \left( \sum_{i=1}^N ((C_iR_i)^+)^\beta - S_+^\beta \right) \Indicator(S_+ > S_-) \right] \label{eq:ConcaveSum1} \\
&\hspace{5mm} + E\left[ \left( S_+^\beta - (S_+-S_-)^\beta \right) \Indicator(S_+ > S_-) \right]. \label{eq:ConcaveSum2}
\end{align}

The first expectation in \eqref{eq:ConcaveSum1} can be bounded as follows. Let $a = \beta/(1+\epsilon)$ and $b = \epsilon\beta/(1+\epsilon)$
\begin{align*}
E\left[ \sum_{i=1}^N ((C_iR_i)^+)^\beta \Indicator(S_+ \leq S_-) \right]  &= E\left[ \sum_{i=1}^N E\left[ \left. ((C_iR_i)^+)^\beta \Indicator(0 < C_iR_i \leq -S + C_iR_i) \right| N, C_1,\dots, C_N \right] \right] \\
&\leq E\left[ \sum_{i=1}^N E\left[ \left. |C_iR_i|^{a} |S-C_iR_i|^{b} \right| N, C_1,\dots, C_N \right] \right] \\
&= E\left[ |R|^{a} \right] E\left[ \sum_{i=1}^N |C_i|^{a}  E\left[ \left. |S-C_iR_i|^{a \cdot \frac{b}{a}} \right| N, C_1,\dots, C_N \right] \right]  \\
&\leq E\left[ |R|^{a} \right]  E\left[ \sum_{i=1}^N |C_i|^{a} \left( E\left[ \left. \sum_{j=1}^N |C_jR_j|^{a} \right| N, C_1,\dots, C_N \right]  \right)^{\frac{b}{a}} \right] \\
&= \left( E\left[ |R|^{a} \right] \right)^{1+b/a} E\left[ \sum_{i=1}^N |C_i|^a \left( \sum_{j=1}^N |C_j|^{a} \right)^\frac{b}{a} \right] \\
&=  \left( E\left[ |R|^{\beta/(1+\epsilon)} \right] \right)^{1+\epsilon} E\left[  \left( \sum_{i=1}^N |C_i|^{\beta/(1+\epsilon)} \right)^{1+\epsilon} \right]  < \infty, 
\end{align*}
where in the second equality we used the conditional independence of $C_i R_i$ and $S - C_iR_i$. 

To analyze the expectation in \eqref{eq:ConcaveSum2} note that since $|x^\beta - y^\beta| \leq |x - y|^\beta$ for any $x,y \geq 0$, it follows that
\begin{align*}
E\left[ \left( S_+^\beta - (S_+-S_-)^\beta \right) \Indicator(S_+ > S_-) \right] &\leq E\left[ S_-^\beta  \Indicator(S_+ > S_-) \right] \leq E\left[ \sum_{i=1}^N ((C_iR_i)^-)^\beta \Indicator(S_- \leq S_+) \right],
\end{align*}
which is finite by the same arguments used above.

Finally, to analyze the second expectation in \eqref{eq:ConcaveSum1}, note that it is bounded by
\begin{align*}
E\left[  \sum_{i=1}^N ((C_iR_i)^+)^\beta - S_+^\beta  \right] &\leq E\left[  \sum_{i=1}^N ((C_iR_i)^+)^\beta - 
\left( \max_{1\leq i \leq N} (C_iR_i)^+\right)^\beta   \right] + E\left[  \left| \left( \max_{1\leq i \leq N} (C_iR_i)^+\right)^\beta - S_+^\beta \right|  \right] \\
&\leq 2 E\left[  \sum_{i=1}^N ((C_iR_i)^+)^\beta - 
\left( \max_{1\leq i \leq N} (C_iR_i)^+\right)^\beta   \right]  ,
\end{align*}
which is finite by Lemma \ref{L.Max_Approx}.
\end{proof}

\bigskip

\begin{proof}[Proof of Lemma \ref{L.Max_Approx}]
Let $T_i$ be any of the random variables $C_i R_i$, $-C_i R_i$, or $|C_i R_i|$ and note that the integral is positive since 
\begin{align*}
P\left( \max_{1\leq i \leq N} T_i > t \right) = E\left[   1\left(  \max_{1\leq i \leq N} T_i > t \ \right)  \right] &\leq E\left[   \sum_{i=1}^N 1\left( T_i > t   \right)  \right] .
\end{align*}
To see that the integral is equal to the expectation involving the $\alpha$-moments note that
\begin{align*}
&\int_{0}^\infty \left( E\left[ \sum_{i=1}^N 1(T_i > t ) \right] - P\left( \max_{1\leq i \leq N} T_i > t \right) \right)  t^{\alpha -1} \, dt \\
&= \int_0^\infty \left( E\left[ \sum_{i=1}^N 1(T_i > t)   -  1\left(\max_{1\leq i \leq N} T_i > t \right)  \right] \right)  t^{\alpha -1} \, dt \\
&= E\left[  \int_0^\infty \left(   \sum_{i=1}^N  1(T_i > t)  - 1\left(\max_{1\leq i \leq N} T_i > t \right)  \right)  t^{\alpha -1} \, dt  \right] \qquad \text{(by Fubini's Theorem)} \\
&= E\left[   \sum_{i=1}^N  \frac{1}{\alpha} (T_i^+)^{\alpha}  - \frac{1}{\alpha} \left( \left( \max_{1\leq i \leq N} T_i \right)^+\right)^{\alpha}  \right] ,
\end{align*}
where the last equality is justified by the assumption that $\sum_{i=1}^N |T_i|^\alpha < \infty$ a.s.

The rest of the proof is essentially the same as that of Lemma 4.7 in \cite{Jel_Olv_11} and is therefore omitted.
\end{proof}

\begin{proof}[Proof of Lemma \ref{L.ExtraQ}]
Let $S = \sum_{i=1}^N  C_i R_i$ and suppose first that $d(t) = t^+$. If $0 < \alpha \leq 1$, then we can use the inequality $|x^\alpha - y^\alpha| \leq |x-y|^\alpha$ for all $x,y \geq 0$ to obtain
\begin{align*}
E\left[ \left| ((S+Q)^+)^\alpha - (S^+)^\alpha \right| \right] &\leq  E\left[ \left| (S+Q)^+ - S^+ \right|^\alpha \right] \\
&= E\left[ \left( (S+Q)^+- S^+ \right)^\alpha \Indicator(Q \geq 0) \right] + E\left[ \left( S - (S+Q) \right)^\alpha \Indicator(Q < 0 \leq S+Q) \right] \\
&\hspace{5mm} + E\left[ (S^+)^\alpha \Indicator(Q < 0, S+Q < 0) \right] \\
&\leq E\left[ (Q^+)^\alpha \Indicator(Q \geq 0) \right] + E\left[ \left( -Q \right)^\alpha \Indicator(Q < 0 \leq S+Q) \right] \\
&\hspace{5mm} + E\left[ ((-Q)^+)^\alpha \Indicator(Q< 0, S+Q < 0) \right] \\
&\leq E[|Q|^\alpha] < \infty.
\end{align*}
If $\alpha > 1$ we use the inequality  
$$(x+t)^\kappa \leq \begin{cases}
x^\kappa + t^\kappa, & 0 < \kappa \leq 1, \\
x^\kappa + \kappa (x+t)^{\kappa-1} t, & \kappa > 1,
\end{cases}$$
for any $x,t \geq 0$. Let $p = \lceil \alpha \rceil$, apply the second inequality $p-1$ times and then the first one to obtain 
$$(x+t)^\alpha \leq x^\alpha + \alpha (x+t)^{\alpha-1} t  \leq \dots \leq x^\alpha + \sum_{i=1}^{p-2} \alpha^i x^{\alpha-i} t^i + \alpha^{p-1}  (x+t)^{\alpha-p+1} t^{p-1}  \leq x^\alpha + \alpha^p t^\alpha + \alpha^p\sum_{i=1}^{p-1} x^{\alpha-i} t^i.$$
Hence, it follows that
\begin{align*} 
E\left[ \left| ((S+Q)^+)^\alpha - (S^+)^\alpha \right| \right] &= E\left[ \left( ((S+Q)^+)^\alpha - (S^+)^\alpha \right) \Indicator(Q \geq 0) \right] + E\left[ \left( S^\alpha- (S+Q)^\alpha \right) \Indicator(Q < 0 \leq S+Q) \right]  \\
&\hspace{5mm} +  E\left[ (S^+)^\alpha \Indicator(Q < 0, S+Q < 0) \right] \\
&\leq E\left[ \left( (S^+ +Q^+)^\alpha - (S^+)^\alpha \right) \Indicator(Q \geq 0) \right] + E\left[ \left( S^\alpha- (S- Q^-)^\alpha \right) \Indicator(Q < 0 \leq S+Q) \right]  \\
&\hspace{5mm} +  E\left[ ((-Q)^+)^\alpha \Indicator(Q < 0, S+Q < 0) \right] \\
&\leq E\left[ \left( \alpha^p (Q^+)^\alpha + \alpha^p \sum_{i=1}^{p-1} (S^+)^{\alpha-i} (Q^+)^i \right) \Indicator(Q \geq 0) \right]  \\
&\hspace{5mm} + E\left[ \alpha S^{\alpha-1} (Q^-) \Indicator(Q < 0 \leq S+Q) \right] + E\left[ (Q^-)^\alpha \Indicator(Q < 0, S+Q < 0) \right] \\ 
&\leq \alpha^p E[|Q|^\alpha] + 2 \alpha^p \sum_{i=1}^{p-1} E\left[(S^+)^{\alpha-i} |Q|^i \right].
\end{align*}
To see that each of the expectations of the form $E\left[(S^+)^{\alpha-i} |Q|^i \right]$ is finite note that $S^+ \leq \sum_{i=1}^N |C_iR_i|$ and follow the same steps as in the proof of Lemma 4.8 in \cite{Jel_Olv_11}. 

To establish the result for $d(t) = t^-$ simply note that 
$$E\left[ \left| ((S+Q)^-)^\alpha - (S^-)^\alpha \right| \right] = E\left[ \left| ((-S-Q)^+)^\alpha - ((-S)^+)^\alpha \right| \right]$$
and apply the result for the positive part. Finally, for $d(t) = |t|$ we use the fact that $|x|^\beta = (x^+)^\beta + (x^-)^\beta$ for any $x \in \mathbb{R}$ to obtain
\begin{align*}
E\left[ \left| |S+Q|^\alpha - |S|^\alpha \right| \right] &=  E\left[ \left| ((S+Q)^+)^\alpha  + ((S+Q)^-)^\alpha - (S^+)^\alpha - (S^-)^\alpha  \right| \right] ,
\end{align*}
which is finite by the previous two cases $d(t) = t^+$ and $d(t) = t^-$. 
\end{proof}

\begin{lem} \label{L.NewIntegralIneq}
For any two real valued random variables $X$ and $Y$ on a common probability space, 
$$\int_0^\infty E\left[ \left| 1(X > t) - 1(Y > t) \right| \right] t^{\alpha-1} dt \leq \frac{1}{\alpha} E\left[  \left| (X^+)^\alpha - (Y^+)^\alpha \right| \right],$$
finite or infinite. 
\end{lem}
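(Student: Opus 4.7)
The plan is to show that the stated inequality is in fact an equality that holds pointwise inside the expectation, after which Tonelli's theorem finishes the job.

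First I would fix $\omega$ and analyze the integrand deterministically. Writing $x = X(\omega)$ and $y = Y(\omega)$, one has $|1(x>t) - 1(y>t)| = 1$ precisely when $t$ lies between $x$ and $y$; more exactly, for $t \in \mathbb{R}$, $|1(x>t) - 1(y>t)| = 1(\min(x,y) \leq t < \max(x,y))$. Restricting to $t \geq 0$, the effective range of integration becomes $[\max(\min(x,y),0), \max(x,y))$ when $\max(x,y) > 0$, and is empty otherwise. A short case check (splitting on the signs of $x$ and $y$) shows that in every case
\[
\int_0^\infty |1(x>t) - 1(y>t)|\, t^{\alpha-1}\, dt \;=\; \frac{1}{\alpha}\bigl|(x^+)^\alpha - (y^+)^\alpha\bigr|.
\]
Indeed, assuming without loss of generality $x \leq y$: if $x \geq 0$ the integral equals $(y^\alpha - x^\alpha)/\alpha$; if $x < 0 \leq y$ it equals $y^\alpha/\alpha = ((y^+)^\alpha - (x^+)^\alpha)/\alpha$; and if $y < 0$ both sides are zero.

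Having this pointwise identity, I would integrate (or rather, take expectation) on both sides. Since the integrand $|1(X>t)-1(Y>t)| t^{\alpha-1}$ is jointly measurable and nonnegative on $\Omega \times (0,\infty)$, Tonelli's theorem permits the exchange of $E[\cdot]$ and $\int_0^\infty\cdot\, dt$, yielding
\[
\int_0^\infty E\bigl[|1(X>t) - 1(Y>t)|\bigr]\, t^{\alpha-1}\, dt \;=\; \frac{1}{\alpha}\, E\bigl[|(X^+)^\alpha - (Y^+)^\alpha|\bigr],
\]
which is stronger than the claimed inequality. The identity holds whether both sides are finite or infinite, matching the statement of the lemma.

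There is essentially no obstacle here: the only mildly delicate point is the pointwise sign bookkeeping showing that the positive-part difference $(y^+)^\alpha - (x^+)^\alpha$ correctly captures the contribution from the $t \geq 0$ constraint, i.e., that negative values of $\min(x,y)$ are properly ``clipped'' to zero. Once the deterministic identity is verified in all three sign configurations, Tonelli does the rest.
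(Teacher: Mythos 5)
Your proof is correct and follows essentially the same route as the paper's: a pointwise analysis of $|1(X>t)-1(Y>t)|$ (the paper splits it into the disjoint indicators $1(Y\le t<X)+1(X\le t<Y)$, which amounts to your $\min$/$\max$ identity), followed by Fubini--Tonelli and direct evaluation of $\int t^{\alpha-1}\,dt$ over the clipped interval. Your observation that the bound is in fact an equality is consistent with the paper's computation, whose single ``$\le$'' step is also an equality since the two events are disjoint.
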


\begin{proof}
Note that
$$|1(X> t) - 1(Y > t)| = |1(X > t, Y \leq t) - 1(Y > t, X \leq t) | \leq 1(Y \leq t < X) + 1(X \leq t < Y). $$
It follows from this observation and Fubini's theorem that
\begin{align*}
&\int_0^\infty E\left[ \left| 1(X > t) - 1(Y > t) \right| \right] t^{\alpha-1} dt  \\
&= E\left[  \int_0^\infty \left| 1(X > t) - 1(Y > t) \right| t^{\alpha-1} dt  \right]  \\
&\leq E\left[ \int_0^\infty 1(Y \leq t < X) t^{\alpha-1} dt + \int_0^\infty 1(X \leq t < Y) t^{\alpha-1} dt \right] \\
&= E\left[ \int_{Y^+}^{X^+} t^{\alpha-1}dt \, 1(Y^+ < X^+) + \int_{X^+}^{Y^+} t^{\alpha-1} dt \, 1(X^+ < Y^+) \right] \\
&= E\left[ \frac{1}{\alpha} ( (X^+)^\alpha - (Y^+)^\alpha) 1(Y^+ < X^+) + \frac{1}{\alpha} ((Y^+)^\alpha - (X^+)^\alpha) 1(X^+ < Y^+) \right] \\
&= \frac{1}{\alpha}  E\left[ | (X^+)^\alpha - (Y^+)^\alpha | \right].
\end{align*}
\end{proof}



\bibliographystyle{plain}

\end{document}